\documentclass[11pt]{article}
\usepackage[a4paper,top=2cm,bottom=2cm,left=1.5cm,right=1.5cm]{geometry}
\usepackage[utf8]{inputenc}
\usepackage{amsmath,amsfonts,amssymb,amsthm}
\usepackage{natbib,hyperref}
\usepackage{graphicx}
\usepackage{float}
\usepackage{xcolor}

\usepackage{multirow}

\newtheorem{prop}{Proposition}[section]

\newtheorem{rmq}{Remark}[section]  
\newtheorem{theo}{Theorem}[section] 
\newtheorem{lem}{Lemma}[section]



\newcommand{\limite}[2]{\xrightarrow[#1]{#2}}

\DeclareMathOperator*{\argmin}{arg\,min}

\DeclareUnicodeCharacter{B0}{\textdegree}
 
\usepackage{hyperref}
\hypersetup{
pdfpagemode=UseOutlines,      
pdfstartview=Fit,             
pdffitwindow=true,            
pdfpagelayout=TwoColumnsRight,
pdftoolbar=true,              
pdfmenubar=true,              
bookmarksopen=false,          
bookmarksnumbered=true,       
colorlinks=true,              
pdfauthor={Ton nom},     
pdftitle={Titre PDF},    
pdfcreator=PDFLaTeX,          %
pdfproducer=PDFLaTeX,         %
linkcolor=blue,               
urlcolor=blue,                
anchorcolor=blue,            
citecolor=blue,              
frenchlinks=true,             
pdfborder={0 0 0}             
}
\newcommand{\cb}[1]{}

\input epsf
\def\build#1_#2^#3{\mathrel{
\mathop{\kern 0pt#1}\limits_{#2}^{#3}}}

\def\Annexe{{\large Annexe}}
%


\newcommand{\bqa}{\begin{eqnarray}}
\newcommand{\eqa}{\end{eqnarray}}
\newcommand{\bdesc}{\begin{description}}
\newcommand{\edesc}{\end{description}}
\newcommand{\bqan}{\begin{eqnarray*}}
\newcommand{\eqan}{\end{eqnarray*}}

\def\finl{\hfill\break}

\def\norm#1{\left\| #1 \right\|}

\def\acc#1{\left\{ #1 \right\}}
\def\pa#1{\left( #1 \right)}

\def\cro#1{\left[ #1 \right]}
\def\acco#1{\left \{  #1\right \}}

\def\ndx2{n\,d_x^2}

\def\bkE{{\mathbb{E}}}

\def\bkR{{\mathbb{R}}}

\def\bkF{\mbox{{\rm I\kern-.17em F}}}

\def\bkNp{\mbox{{\small\rm I\kern-.20em N}}}
\def\bkO{{\rm \kern.24em
            \vrule width.05em height1.4ex depth-.05ex
            \kern-.26em O}}
\def\bkZ{\mbox{{\rm Z\kern-.32em Z}}}

\def\non{\mbox{$^{\mbox{\bf --}}$\kern-.1em \vrule width.05em height1.5ex}}


\def\emptysq{\mathbin{\vbox{\hrule\hbox{\vrule height1ex \kern.5em 
       \vrule height1ex}\hrule}}}

\usepackage{braket}

\begin{document}

\footnotetext[1]{antoine.godichon$\_$baggioni@upmc.fr,   
	  Laboratoire de Probabilités, Statistique et Modélisation (LPSM),
	Sorbonne Université,
	4 Place Jussieu, 75005 Paris, France.}
\footnotetext[2]{Laboratoire de Mathématiques de l'INSA Rouen Normandie,
	INSA Rouen Normandie,
	BP 08 - Avenue de l'Université, 76800 Saint-Etienne du Rouvray, France}

\title{Online estimation of the inverse of the Hessian for stochastic optimization with application to universal stochastic Newton algorithms}

\author{ Antoine Godichon-Baggioni   \footnotemark[1],	 
  Wei Lu\footnotemark[2] and 
	  Bruno Portier\footnotemark[2]
}

\maketitle

\begin{abstract}
This paper addresses second-order stochastic optimization for estimating the minimizer of a convex function written as an expectation. A direct recursive estimation technique for the inverse Hessian matrix using a Robbins-Monro procedure is introduced. This approach enables to drastically reduces computational complexity. Above all, it allows to develop universal stochastic Newton methods and investigate the asymptotic efficiency of the proposed approach. This work so expands the application scope of second-order algorithms in stochastic optimization.
\end{abstract}

\medskip
\noindent\textbf{Keywords: }
Stochastic Newton algorithm; Stochastic Optimization; Robbins-Monro algorithm; online estimation

\medskip

\section{Introduction}

In this paper, we consider the usual stochastic optimization problem, which consists of estimating the parameter $\theta \in \mathbb{R}^d$ defined by
$$\theta = \arg \min_{h \in \bkR^d} G(h)$$
where the function $G$ is defined for all $h\in\bkR^d$ by :
$G(h) = \bkE\cro{g(X, h)}$
and $X$ is a random vector of $\mathbb{R}^p$. 
The function $g: \mathbb{R}^p \times \mathbb{R}^d \longrightarrow \mathbb{R}$ is assumed to be twice continuously differentiable. 
This problem arises in various contexts such as estimating the parameters of logistic regressions \citep{bach2014adaptivity, cohen2017projected}, geometric median and quantiles \citep{HC, CCG2015}, or superquantiles \citep{bercu2020stochastic, costa2020non}. We denote $\nabla_h g$ and $\nabla^2_h g$ as the gradient and Hessian matrix of $g$ with respect to the second variable $h$, and $\nabla G$ and $\nabla^2 G$ as the gradient and Hessian matrix of $G$. 
It is assumed that the matrix $\nabla^2 G(\theta)$ is positive definite.

Starting from a sequence of independent random vectors $(X_n)_{n\geq 1}$ with the same distribution as $X$, 
we aim to online estimate the parameter $\theta$. 
One of the most well-known methods in this context is certainly the stochastic gradient algorithm, recursively defined for all $n \geq 1$ by:
\[
\theta_{n}^{SG} = \theta_{n-1}^{SG} - \nu_{n} \nabla_{h} g \left( X_{n} , \theta_{n-1}^{SG} \right)  
\]
where $\theta_{0}^{SG}$ is an arbitrarily chosen initial value and $(\nu_n)_{n\geq 1}$ is a sequence of positive real numbers decreasing towards 0. 
These algorithms have been extensively studied, 
with asymptotic results found by \cite{pelletier1998almost, pelletier2000asymptotic} 
and non-asymptotic results, such as uniform bounds of the quadratic mean error, presented by \cite{moulines2011non,gadat2017optimal} to name a few. 
To ensure asymptotic efficiency, an additional step consists of considering an averaged version of the estimates \citep{polyak1992acceleration}.

Despite their known efficiency, these methods can be very sensitive to ill-conditioned problems, where the Hessian has eigenvalues at different scales \citep{leluc2020asymptotic, bercu2020efficient}. 
To overcome this problem, second-order stochastic algorithms of the form 
$$\theta_n = \theta_{n-1} - \nu_{n}\,A_n \nabla_h g(X_n, \theta_{n-1})$$
have been proposed and recently studied. 
Here, $(\nu_n)_{n\geq 1}$ is a sequence of positive real numbers decreasing towards 0 and 
the matrix $A_n$ is a recursive estimate of the inverse of the Hessian matrix of $G$ at $\theta$, 
i.e a recursive estimate of $H^{-1}$ with  $H = \nabla^2 G(\theta)$. 
The challenge lies in constructing the recursive estimate $A_n$.

Several recursive second-order algorithms have been proposed and studied. 
For example, \cite{bercu2020efficient} propose an efficient stochastic Newton algorithm for estimating the parameters of a logistic regression model. 
In a recent work, \cite{bercu2023stochastic} propose a stochastic Gauss-Newton algorithm to estimate the entropically regularized Optimal Transport cost between two discrete probability measures. 
\cite{cenac2020efficient} study the asymptotic properties of a stochastic Gauss-Newton algorithm for estimating the parameters of a non-linear regression model. 
\cite{godichon2022recursive} propose second-order algorithms to solve the Ridge regression problem in the linear and logistic framework, 
while the case of the geometric median is introduced and studied by \cite{godichon2023online}. 
In all these algorithms, the estimate of the inverse of the Hessian matrix is recursively computed using the Riccati inversion formula (also called Sherman-Morrison formula, see e.g. \cite{duflo1990methodes} p. 96). 
This calculation is made possible thanks to the particular form of the estimate of the Hessian matrix $H$, presented as $(1/n)\sum_{k=1}^n a_k \phi_k\phi_k^T$, 
where $(a_n)_{n\geq 1}$ is a sequence of positive real random variables 
and $(\phi_n)_{n\geq 1}$ is a sequence of random vectors in $\mathbb{R}^d$.

However, it is not always possible to obtain such an estimate of the Hessian matrix. 
In this work, we propose to construct a direct recursive estimate of $H^{-1}$ 
without first attempting to construct an estimate of $H$. 
This approach is based on the fact that we have $HH^{-1} = H^{-1}H = I_d$ 
and, consequently, the following relation:
\begin{equation}\label{Relation_A}
\bkE\cro{H^{-1} \nabla_h^2 g(X, \theta) + \nabla^2_h g(X, \theta) H^{-1}\ -\ 2 I_d} = 0	
\end{equation}
where $I_d$ denotes the  identity matrix of order $d$. 
Using a Robbins-Monro type algorithm, we propose a recursive estimate of the matrix $H^{-1}$ defined for all $n\geq 1$ by:
$$A_n = A_{n-1} - \gamma_n \pa{ 
	A_{n-1} \nabla^2_h g(X_n, \theta_{n-1}) + \nabla^2_h g(X_n, \theta_{n-1}) A_{n-1} - 2\,I_d}$$
where $(\gamma_n)_{n\geq 1}$ is a sequence of positive real numbers, decreasing towards 0 and $\theta_{n-1}$ is an estimate of $\theta$.

However, the complexity of computing this estimate is of order $\mathcal{O}(d^3)$, 
which is the same as directly calculating the inverse of an estimate of matrix $H$. 
Nevertheless, we can introduce an algorithm with complexity of order $\mathcal{O}(d^2)$ based on the following observation: let $Z$ be a centered random vector in $\mathbb{R}^d$ with variance-covariance matrix $I_d$, independent of the vector $X$. Then, 
\begin{equation}\label{Relation_A_avec_Z}
	\bkE\cro{H^{-1}\,Z\,Z^T \nabla_h^2 g(X, \theta)\ + \nabla^2_h g(X, \theta) Z\,Z^T H^{-1}\ -\ 2 I_d} = 0 .
\end{equation}
Therefore, considering a sequence $(Z_n)_{n\geq 1}$ of random vectors in $\mathbb{R}^d$ independent of the sequence $(X_n)_{n\geq 1}$ leads to an estimate of the form:
$$A_n = A_{n-1} - \gamma_n \pa{ 
	A_{n-1} Z_n Z_n^T \nabla^2_h g(X_n, \theta_{n-1}) + \nabla^2_h g(X_n, \theta_{n-1}) Z_n Z_n^T A_{n-1} - 2\,I_d}.$$

We thus obtain a universal estimate of the inverse of the Hessian, 
and thus with reduced calculus time. 
 {
Note that one could consider using random matrices instead of vectors, and this injection can be linked to Randomized Subspace Newton methods (see, for instance, \cite{gower2019rsn}). 
Additionally, various approaches exist for sketching the Hessian (see \cite{pilanci2016iterative, pilanci2017newton}, among others).
Notably, in the offline iterative literature, several methods approximate the inverse of the Hessian. One of the most well-known is the BFGS algorithm \citep{shanno1970conditioning}, along with its stochastic variants \citep{schraudolph2007stochastic, byrd2016stochastic}. Other notable approaches include those proposed by \cite{ye2017approximate} and \cite{agarwal2017second}.}

To further enhance convergence rate, 
we also consider its weighted averaged version, as discussed by \cite{mokkadem2011generalization,boyer2022asymptotic}. 
We establish the almost sure rates of convergence for the proposed estimates, after making slight modifications. 
These results remain true for any consistent estimates $ \theta_{n} $. 
Based on this concept, we introduce a universal recursive Newton algorithm 
and its weighted averaged version. 
Additionally, we provide their convergence rates and demonstrate the asymptotic efficiency of the weighted averaged estimates.

This paper is organized as follows:
Section 2 concerns the framework and the main assumptions.
Section 3 deals with the estimation of  $H^{-1}$ and the main convergence results
while Section 4 concerns the Universal Weighted Averaged Stochastic Newton algorithm. A simulation study highlights the performance of the proposed methods in Section \ref{sec::simu}.  
The proofs of the different results are postponed  in Section \ref{proof}.

\section{Framework}
We consider the problem of minimizing the convex function $G : \mathbb{R}^d \longrightarrow \mathbb{R}$ 
defined for all $h \in \mathbb{R}^d$ by:
$$G(h):=\mathbb{E}\cro{g(X,h)},$$
%
 {where $g(X, \cdot)$ is a convex function and $X$ is a random vector of $\bkR^p$. Furthermore, let us suppose that for all $h \in \mathbb{R}^{d}$, there is a subset $\Omega_{h}  \subseteq \mathbb{R}^{p}$ such that for all $x \in \Omega_{h}$, the functional $g(x,.)$ is twice-differentiable at $h$ and $\mathbb{P} \left[ X \in \Omega_{h} \right] = 1$.
}
We assume also that there exists a unique value $\theta \in \bkR^d$
such that 
$$\nabla G(\theta) = 0. $$
This assumption, couple with strict convexity, ensures the existence of a minimizer for $G$ 
and provides a well-defined optimization problem. 
Now, let's introduce the assumptions that underlie the parameter estimation framework for $\theta$:

\finl
\textbf{(A1a)} There exists $C>0$ such that for all $h \in \mathbb{R}^d$,
 \[
 \mathbb{E}\cro{\norm{\nabla_hg(X,h)}^2}\le C\pa{1+G(h)-G(\theta)}.
 \]
{\noindent \textbf{(A1b)} There are positive constants $R,C_{R}>0$ such that for all $h \in \mathbb{R}^d$,
 \[
 \mathbb{E}\cro{\norm{\nabla_hg(X,h)}^4  \mathbf{1}_{\| h - \theta \| \leq R}}\leq C_{R}.
 \]
 }
 \finl
\textbf{(A2)} The functional $G$ is twice continuously differentiable and  $\nabla^2G(\theta)$ is positive. In addition, the Hessian is uniformly bounded, i.e there exists a positive constant $L_{\nabla G}$ such that for all $h\in\mathbb{R}^d$, 
$$\norm{\nabla^2G(h)}_{op} \le L_{\nabla G}.$$

\noindent
\textbf{(A3)} The function $\nabla^2G$ is Lipschitz on a neighborhood of $\theta$, i.e. there exist positive constants $r>0$ and $L_{r}$ such that for all $h\in\mathcal{B}(\theta,r)$
$$\norm{\nabla^2G(h)-\nabla^2G(\theta)}_{op} \le L_{r}\norm{\theta-h},$$
where $\mathcal{B}(\theta,r)$ denotes a ball of radius $r$ centered at $\theta$.

\noindent
\textbf{(A4)} There exists $q >2$ and $C_q$ such that for all $h \in \mathbb{R}^{d}$, 
\[
\mathbb{E}\cro{\norm{\nabla^2_hg(X,h)}_F^q}\le C_q.
\]

These assumptions are very close to those presented in the literature  \citep{pelletier2000asymptotic,gadat2017optimal,godichon2019online}.
Assumption \textbf{(A1)} controls the growth of the gradient and guarantees the stability of the estimation process. It ensures that the gradient remains bounded as the estimation progresses. {Observe that Assumption \textbf{(A1b)} plays a crucial role in determining the convergence rate of Newton estimates and is already required to establish the almost sure convergence rate of stochastic gradient algorithms \citep{pelletier1998almost}. However, it can be relaxed to accommodate any order, depending on the choice of the step size (see, for instance, \cite{boyer2022asymptotic, pelletier1998almost}). For readability, we set it to 4 in this work. Additionally, note that this assumption differs slightly from equation (10) in \cite{boyer2022asymptotic}, but all results in that paper remain valid under this assumption.}
Assumption \textbf{(A2)} ensures that the curvature of $G$ at $\theta$ is well-behaved, allowing the estimation algorithm to reliably exploit the local structure of $G$  {and is present in most of the offline Newton's literrature (see \cite{byrd2016stochastic,ye2017approximate,agarwal2017second} among others)}. 
This assumption also guarantees that the gradient of $G$ is Lipschitz continuous with a constant $L_{\nabla G}$. 
This Lipschitz continuity is crucial as convergence results are obtained using a Taylor's expansion of $G$ up to the second order.  {Note that Assumption \textbf{(A1a)}, when combined with \textbf{(A2)}, can be related to the well-known expected smoothness condition \citep{gower2019sgd}.}
Assumption \textbf{(A3)} indicates that the Hessian matrix does not exhibit abrupt changes within a neighborhood around $\theta$  { and can be related to self-concordance property \citep{bach2014adaptivity}}. 
This assumption ensures the stability of the Hessian estimates during the estimation process.
Assumption \textbf{(A4)} guarantees that the second-order derivative of $G$ does not exhibit excessive fluctuations. 
It imposes a bound on the variation of the Hessian matrix, providing further stability to the estimation algorithm.
It's worth noting that Hölder's inequality leads to
$$\norm{\nabla^2G(h)}_F \le \mathbb{E}\cro{\norm{\nabla^2_hg(X,h)}_F^q}^{1/q} \le C_q^{1/q}.$$ 
Of course, this inequality intertwines with the bound in Assumption \textbf{(A2)}, but we keep the notation $L_{\nabla G}$ for the sake of clarity.

These assumptions, along with the differentiability properties of $g$ and $G$, provide a solid foundation for developing efficient second order methods to solve the minimization problem and obtain reliable estimates of $\theta$.  {Appendix \ref{appendix} provides a detailed verification of these assumptions in the contexts of linear and logistic regressions, as well as for estimating the geometric median and $p$-means.}

\def\Achap{\widehat A}
\def\wtheta{\widehat\theta}

\section{Estimation of the Hessian inverse}

In this section, our focus is solely on estimating the inverse Hessian of function $G$ in $\theta$, 
denoted as $H^{-1}$ with $H = \nabla^2 G(\theta)$. 
Even if our motivation is to estimate $H^{-1}$ for proposing a second-order algorithm, 
the estimation of $H^{-1}$ can also be valuable for recursively constructing confidence intervals or significance statistical tests for a component of parameter $\theta$
when the parameter $\theta$ is estimated using another asymptotically efficient algorithm like the averaged stochastic gradient algorithm.
Indeed, in most cases, the asymptotic variance  involved in the central limit theorem generally depends on matrix $H^{-1}$ and its estimation is then required.

Let $(X_n)_{n\geq 1}$ be a sequence of independent random vectors in $\mathbb{R}^p$ with the same distribution as  $X$. Assume first that $\theta$ is known.
From equality (\ref{Relation_A}), 
the matrix $H^{-1}$ satisfies an equation of the form $\Phi(H^{-1}) = 0$. 
We can then employ the Robbins-Monro procedure \citep{robbins1951stochastic} to recursively estimate the parameter $H^{-1}$. 
Denoting this estimator as $\Achap_n$, for any $n\geq 1$, we have:
$$\Achap_n = \Achap_{n-1} - \gamma_n \pa{ 
	\Achap_{n-1} \nabla^2_h g(X_n, \theta) + \nabla^2_h g(X_n, \theta) \Achap_{n-1} - 2\,I_d},$$
where $\Achap_0$ is an arbitrary symmetric positive definite matrix, 
and $\gamma_n = c_\gamma n^{-\gamma}$ with $\frac{1}{2}<\gamma<1$ and $c_\gamma > 0$. 
It is important to note that $\Achap_n$ is symmetric for any $n \geq 1$ due to its construction. 
However, since $\theta$ is unknown, we need to estimate it. 
Assuming we have an efficient recursive estimator $\hat{\theta}_{n-1}$ of $\theta$ (e.g., a stochastic gradient estimator), 
we can easily derive an estimator of $H^{-1}$ using a plug-in procedure:
$$\Achap_n = \Achap_{n-1} - \gamma_n \pa{ 
	\Achap_{n-1} \nabla^2_h g(X_n, \wtheta_{n-1}) + \nabla^2_h g(X_n, \wtheta_{n-1}) \Achap_{n-1} - 2\,I_d}.$$
This estimator is always symmetric but not necessarily positive definite. 
To {partially} ensure positive definiteness, 
we introduce a truncation based on the norm of $\nabla_{h}^{2} g (X_n, \wtheta_{n-1})$, 
leading to the following estimator of $H^{-1}$:
\begin{equation}
\Achap_n = \Achap_{n-1} - \gamma_n \pa{
\Achap_{n-1} \nabla_h^2 g(X_n, \wtheta_{n-1}) + \nabla_h^2 g(X_n, \wtheta_{n-1}) - 2 I_d}
\mathbf{1}_{\acc{\norm{\nabla^2_h g(X_n, \wtheta_{n-1})}_{op}\le \beta_n}} ,
\end{equation}
where $\beta_n = c_\beta n^\beta$ with $\frac{1-\gamma}{q-1}<\beta<\gamma - \frac{1}{2}$ and {$\gamma_{n}\beta_{n} \leq 1/2$}. 
Additionally, this truncation enables control over the smallest eigenvalue of $\Achap_n$, 
which is useful for studying an estimator of the parameter $\theta$ involving the matrix $\Achap_n$. 
This is particularly important in establishing the consistency of the Stochastic Newton algorithm presented in Section \ref{sec::WASNA}.

However, although this estimator is efficient, each update requires matrix multiplications, resulting in a computational complexity of order $\mathcal{O}(d^3)$, which is the same as matrix inversion. 
Hence, it is necessary to improve the complexity of each update of $\Achap_n$.

Building on equality \eqref{Relation_A_avec_Z}, 
considering a sequence $(Z_n)_{n\geq 1}$ of independent and identically distributed 
bounded random vectors of $\bkR^d$ such that $\bkE\cro{Z_n} = 0$ and $\bkE\cro{Z_n Z_n^T} = I_d$,
  and independent of $(X_n)_{n\geq 1}$, 
  we can propose another estimate of $H^{-1}$ defined for any $n\geq 1$ as follows:
\begin{align}
\notag 
P_n & = A_{n-1} Z_n \\
\notag 
Q_n & = \nabla_h^2 g(X_n, \wtheta_{n-1}) Z_n\\
A_n &=  {\Pi_{\beta_{n}'}} \left(  A_{n-1}-\gamma_n \pa{ P_n Q_n^T + Q_n P_n^T - 2\,I_d}
\mathbf{1}_{\{\norm{Q_n} \norm{Z_n}\le \beta_n\}} \right)
\label{An}
\end{align}
where $A_0$ is an arbitrary symmetric and positive definite matrix  {and $\Pi_{\beta_{n}'}$ is the projection onto the ball of radius $\beta_{n}'$ (with respect to the Frobenius norm), with $\beta_{n}' = c_{\beta}'n^{\beta '}$ satisfying for all $n$, $\gamma_{n}^{2}\beta_{n}^{2}\beta_{n}' \leq \gamma_{n}$ and $\beta ' > 1- \gamma$ (i.e. one has to take $\beta' \in (1-\gamma , \gamma -2 \beta)$, which is possible as soon as $\beta < \gamma -1/2$). Observe that the projection step only necessitates $O(d^{2})$ operations and ensures the largest eigenvalue of $A_{n}$ not to be too large, and also enables to ensure the positive definiteness of $A_{n}$}.

We can observe that in this algorithm the truncation is only based
on $\norm{Q_n}\norm{Z_n}$,
and not on $\norm{Q_n Z_n^T}_{op}$ as expected,
because we have
$\norm{Q_n Z_n^T}_{op} = \norm{Q_n} \norm{Z_n}$.
Notably, the computational complexity of each update of $A_n$ is now reduced to $\mathcal{O}(d^2)$. 
Moreover, following \cite{mokkadem2011generalization,boyer2022asymptotic}, we can propose a weighted averaged estimate $A_{n,\tau}$, which performs better in practice when the initialization is poor. It is given by
\begin{align}
A_{n,\tau} &= \left(1-\frac{\ln (n+1)^\tau}{\sum_{k=0}^n \ln (k+1)^\tau}\right) A_{n-1,\tau} + \frac{\ln (n+1)^\tau}{\sum_{k=0}^n \ln (k+1)^\tau}A_n . \label{Antau}
\end{align}
This estimator can be recursively computed since for any $n \geq 1$, 
  $\sum_{k=0}^n \ln (k+1)^\tau = \ln (n+1)^\tau + \sum_{k=0}^{n-1} \ln (k+1)^\tau$.
The following theorem establishes the consistency of the estimators $A_n$ and $A_{n,\tau}$ 
for the parameter $H^{-1}$ in the context of estimating the inverse of the Hessian matrix. 
It states that the convergence rate depends on multiple factors, 
including the step sequence $\gamma_n$, the truncation parameter $\beta_n$, the regularization parameter $\tau$, and the convergence rate of the estimate $\wtheta_n$. 

\begin{theo}\label{consisAn}
	Assume that Assumptions \textbf{(A2)} to \textbf{(A4)} hold, { that $\frac{1-\gamma}{q-1}<\beta<\gamma - \frac{1}{2}$,} and that there is an estimate $\hat{\theta}_n$ satisfying for all $\delta >0$
	$$\norm{\hat{\theta}_n-\theta}^2 = o\pa{\frac{\ln n^{1+\delta}}{n^a}} a.s.,$$
	with $a >0$. Then $A_n$ and $A_{n,\tau}$ defined by \eqref{An} and \eqref{Antau} satisfy 
	$$\norm{A_n - H^{-1}}^2 = o\pa{\frac{\ln n^{1+\delta}}{n^{\min\{\gamma,a,2\beta (q-1)\}}}} a.s. \quad\text{and}\quad \norm{A_{n,\tau} - H^{-1}}^2 = o\pa{\frac{\ln n^{1+\delta}}{n^{\min\{1,a,2\beta (q-1)\}}}} a.s. $$
\end{theo}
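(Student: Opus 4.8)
The plan is to treat the recursion \eqref{An} as a Robbins--Monro scheme whose mean field is a Lyapunov operator, and to track the squared Frobenius error $\Delta_n := A_n - H^{-1}$ along the lines of \cite{pelletier1998almost,boyer2022asymptotic}. First I would identify the drift. Writing $\mathcal{F}_{n-1}$ for the $\sigma$-field generated by the first $n-1$ steps and using that $Z_n$ is independent of $X_n$ and of $\mathcal{F}_{n-1}$ with $\bkE[Z_nZ_n^T]=I_d$, the elementary identity $\bkE[Z_nZ_n^T M]=\bkE[M]$ for $M$ independent of $Z_n$ gives, for the untruncated increment, the conditional mean $A_{n-1}\nabla^2G(\wtheta_{n-1})+\nabla^2G(\wtheta_{n-1})A_{n-1}-2I_d$. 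Splitting $\nabla^2G(\wtheta_{n-1})=H+(\nabla^2G(\wtheta_{n-1})-H)$ and using \textbf{(A3)}, this equals $\mathcal{L}(\Delta_{n-1})+\mathcal{R}_{n}$, where $\mathcal{L}(\Delta):=\Delta H+H\Delta$ and $\nrm{\mathcal{R}_n}_F\le 2\nrm{A_{n-1}}_{op}L_r\nrm{\wtheta_{n-1}-\theta}$. The key structural fact is that $\mathcal{L}$ is coercive on symmetric matrices: $\langle \Delta,\mathcal{L}(\Delta)\rangle_F=2\,\mathrm{tr}(\Delta H\Delta)\ge 2\lambda_{\min}(H)\nrm{\Delta}_F^2$, which supplies a raw contraction $1-4\lambda_{\min}(H)\gamma_n$ after taking conditional expectations of $\nrm{\Delta_n}_F^2$.

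Next I would account for the three perturbations, the plug-in bias $\mathcal{R}_n$, the truncation bias, and the martingale noise, and dispose of the projection. The martingale increment $\xi_n$ (untruncated term minus its conditional mean) has, by \textbf{(A4)}, boundedness of $Z_n$, and the a priori bound on $A_{n-1}$, a bounded conditional second moment; the truncation level $\beta_n$ serves to bound the increment by $\mathrm{cst}\,\beta_n$ almost surely, which is what the almost-sure argument needs and is consistent with $\sum\gamma_n^2\beta_n^2<\infty$ since $\beta<\gamma-\tfrac12$. The truncation bias is $-\bkE[(\cdots)\mathbf 1_{\{\nrm{Q_n}\nrm{Z_n}>\beta_n\}}\mid\mathcal{F}_{n-1}]$; writing $W_n=\nrm{Q_n}\nrm{Z_n}$ and using $W_n\mathbf 1_{\{W_n>\beta_n\}}\le W_n^{q}\beta_n^{-(q-1)}$ together with $\bkE[W_n^{q}\mid\mathcal{F}_{n-1}]\le\mathrm{cst}$ from \textbf{(A4)}, this bias is $O(\nrm{A_{n-1}}_{op}\beta_n^{-(q-1)})=O(n^{-\beta(q-1)})$, which is the origin of the exponent $2\beta(q-1)$. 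Finally, since $\beta_n'\to\infty$, for $n$ large one has $\nrm{H^{-1}}_F\le\beta_n'$, so $\Pi_{\beta_n'}$ is a projection onto a convex set containing $H^{-1}$ and is non-expansive toward $H^{-1}$: it can only decrease $\nrm{\Delta_n}_F$ and may be dropped in all upper bounds.

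I would then close the argument in two stages. Stage one: prove $A_n\to H^{-1}$ almost surely by a Robbins--Siegmund argument on $\nrm{\Delta_n}_F^2$, which in particular makes $\nrm{A_{n-1}}_{op}$ almost surely bounded and upgrades the a priori bounds above to genuine constants. Stage two: with $\nrm{A_{n-1}}_{op}$ bounded, Young's inequality applied to the two bias cross-terms (which halves the contraction constant) yields a recursion of the form
\[
\bkE\!\cro{\nrm{\Delta_n}_F^2\mid\mathcal{F}_{n-1}}\le\pa{1-2\lambda_{\min}(H)\gamma_n}\nrm{\Delta_{n-1}}_F^2+\mathrm{cst}\,\gamma_n\pa{\nrm{\wtheta_{n-1}-\theta}^2+n^{-2\beta(q-1)}}+\mathrm{cst}\,\gamma_n^2,
\]
in which the forcing term has rate $o(\ln n^{1+\delta}/n^{\min\{a,\,2\beta(q-1)\}})$ while the step-induced floor is $O(\gamma_n)=O(n^{-\gamma})$. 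Applying a standard almost-sure rate lemma for stochastic approximation with decreasing step (as in \cite{pelletier1998almost,boyer2022asymptotic}) gives $\nrm{\Delta_n}_F^2=o(\ln n^{1+\delta}/n^{\min\{\gamma,a,2\beta(q-1)\}})$ almost surely. For the weighted averaged estimate $A_{n,\tau}$ I would linearise the recursion around $H^{-1}$ and invoke the weighted-averaging theory of \cite{mokkadem2011generalization,boyer2022asymptotic}: averaging removes the step-size limitation, replacing the exponent $\gamma$ by $1$, while the two bias rates $a$ and $2\beta(q-1)$ persist, giving the stated rate $\min\{1,a,2\beta(q-1)\}$.

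The main obstacle is the bootstrapping forced by the growing truncation level $\beta_n$ and projection radius $\beta_n'$: before convergence is known, the conditional noise moment and the plug-in/truncation biases are only controlled by the growing quantities $\nrm{A_{n-1}}_{op}\le\beta_{n-1}'$ and $\beta_n$, so the quantitative recursion does not close on its own. One must first secure almost-sure boundedness and convergence of $A_n$ (using the constraints $\gamma_n\beta_n\le\tfrac12$ and $\gamma_n^2\beta_n^2\beta_n'\le\gamma_n$ to keep one-step increments and projection overshoot under control) and only then convert the inequality into a rate; simultaneously reconciling the lower bound $\beta>(1-\gamma)/(q-1)$ with the upper bound $\beta<\gamma-\tfrac12$, so that all three exponents in the minimum are attainable, is the delicate bookkeeping.
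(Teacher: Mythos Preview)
Your outline is correct and leads to the stated rates, but it takes a different technical route from the paper. The paper does \emph{not} first run Robbins--Siegmund on $\|\Delta_n\|_F^2$; instead it establishes a crude a~priori growth bound $\lambda_{\max}(A_n)=o(n^{1-\gamma}(\ln n)^{1+\delta})$ by a Robbins--Siegmund argument on $\|A_n\|_F^2$ itself (no reference to $H^{-1}$), and then linearises the recursion explicitly as $A_{n+1}-H^{-1}=\alpha_n^*(A_n-H^{-1})+\text{(five remainder terms)}$ with $\alpha_n^*(B)=(I-\gamma_{n+1}H)B(I-\gamma_{n+1}H)$, propagates this through the matrix product $\Psi_{n,k}^*=\prod_{j>k}\alpha_j^*$, and bounds each accumulated remainder separately; the martingale piece is handled by a dedicated Pinelis--type concentration lemma for sums of the form $\sum_k\Psi_{n,k+1}^*\gamma_{k+1}\xi_{k+1}$. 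A first rate then feeds back into the martingale bound and is bootstrapped (twice if $\gamma\le 2/3$) to the final rate. Your scalar Lyapunov approach is more compact and avoids the explicit product $\Psi_{n,k}^*$; it also handles the projection more cleanly (non-expansiveness toward $H^{-1}$, versus the paper's explicit tracking of $\Pi_{n+1}^\perp$ and the argument that it is eventually zero). The price is that your Stage~1 must absorb the unbounded $\|A_{n-1}\|_{op}$ into the recursion via $\|A_{n-1}\|_{op}\le\|\Delta_{n-1}\|_F+\|H^{-1}\|_F$, turning the noise and bias contributions into a $(1+E_n)\|\Delta_{n-1}\|_F^2$ factor with $\sum E_n<\infty$ plus summable forcing; you flag the obstacle but should make this absorption explicit, since the alternative you mention (the bare projection bound $\|A_{n-1}\|_{op}\le\beta_{n-1}'$) is too crude to make the noise term summable for all admissible $\beta'$. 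For $A_{n,\tau}$ the paper proceeds by writing $H(A_{n,\tau}-H^{-1})+(A_{n,\tau}-H^{-1})H$ as a telescoping sum plus weighted remainders and applying an Abel transform, which is exactly the linearisation-and-averaging you invoke.
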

The proof is given in Section \ref{proof}. 
Observe that if $a=1$, 
one can achieve the usual rate of convergence taking $\beta > \frac{1}{2(q-1)}$, 
which is only possible if $q > 1+ \frac{1}{2\gamma -1} $. 
For instance, taking the usual parametrization $\gamma = 3/4$ or $2/3$, 
$q$ must satisfy $q > 3$ or $q> 4$.
\section{Universal Weighted Averaged Stochastic Newton Algorithm}\label{sec::WASNA}

In this section, we introduce the Universal Weighted Averaged Stochastic Newton algorithm 
and discuss its main properties. 
As mentioned in Theorem \ref{consisAn}, using a Weighted Averaged version of the inverse Hessian estimate can yield improved theoretical results. 
Therefore, we incorporate this choice into the Stochastic Newton algorithm. 
Furthermore, we have observed that the convergence rate of the estimate for $\theta$ significantly influences the theoretical behavior of the estimates of $H^{-1}$. 
Consequently, we incorporate the best possible estimate for parameter $\theta$, 
 namely the Weighted Averaged Stochastic Newton estimates, into the latter.
This reasoning leads to the following Weighted Averaged Stochastic Newton algorithm defined for all $n \geq 1$ by 
\begin{align}
\notag 
P_n & = A_{n-1} Z_n \\
\notag 
Q_n & = \nabla_h^2 g(X_n, \theta_{n-1,\tau '}) Z_n \\
	\theta_{n} &= \theta_{n-1} -\nu_{n}A_{n-1,\tau}\nabla_h g(X_n, {\theta}_{n-1}) \label{thetan}\\
	\theta_{n,\tau'} &= \pa{1-\frac{\ln (n+1)^{\tau'}}{\sum_{k=0}^n \ln (k+1)^{\tau'}}}\theta_{n-1,\tau'}+\frac{\ln (n+1)^{\tau'}}{\sum_{k=0}^n \ln (k+1)^{\tau'}}\theta_{n}  \label{thetatau} \\
		A_n &=  {\Pi_{\beta_{n}'}} \left( A_{n-1}-\gamma_n \pa{ P_n Q_n^T +  Q_n P_n^T - 2\,I_d}
	\mathbf{1}_{\acc{\norm{Q_n}\norm{Z_n} \le \beta_n}} \right) \label{AnNS}\\ 
	A_{n,\tau} &= \left(1-\frac{\ln (n+1)^\tau}{\sum_{k=0}^n \ln (k+1)^\tau}\right)A_{n-1,\tau} + \frac{\ln (n+1)^\tau}{\sum_{k=0}^n \ln (k+1)^\tau}A_n \label{AntauNS} 
\end{align}
where  $( \nu_n)_{n\geq 1}$  is a sequence of positive real numbers 
defined for any $n\geq 1$ by $\nu_n = c_\nu n^{-\nu}$ with $c_\nu >0$ and $ \nu \in (1/2,1-\beta)$ satisfying $\gamma + \nu > 3/2$. In addition,  $ \tau , \tau'  \ge 0$. 
The following theorem gives the consistency of the estimates defined by \eqref{thetan} and \eqref{thetatau}.

\begin{theo}\label{consisTheta}
	Assume that Assumptions \textbf{(A1a)}, \textbf{(A2)} to \textbf{(A4)} hold.  {Suppose also that $\frac{1-\gamma}{q-1}<\beta<\gamma - \frac{1}{2}$, $\gamma + \nu > 3/2$, $\nu \in (1/2,1-\beta)$ and $\beta ' \in (1-\gamma, \gamma-2\beta)$.}
   Let $\theta_{n}$ and $\theta_{n,\tau'}$ be defined as in \eqref{thetan} and \eqref{thetatau}. Then, 
	$$\theta_{n}\xrightarrow[n\rightarrow\infty]{a.s.}\theta  \quad\text{and}\quad \theta_{n,\tau'}\xrightarrow[n\rightarrow\infty]{a.s.}\theta. $$
\end{theo}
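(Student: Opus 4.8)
The plan is to treat \eqref{thetan} as a preconditioned stochastic gradient recursion and prove almost sure convergence by a Robbins--Siegmund (stochastic approximation) argument, using as Lyapunov functional the excess risk $V_n := G(\theta_n) - G(\theta) \ge 0$ rather than $\norm{\theta_n - \theta}^2$. The reason for this choice is that $A_{n-1,\tau}$ is only a symmetric positive definite preconditioner which need not commute with the (averaged) Hessian, so the quantity $\langle A_{n-1,\tau}\nabla G(\theta_{n-1}), \theta_{n-1}-\theta\rangle$ is not sign-definite, whereas $\langle \nabla G(\theta_{n-1}), A_{n-1,\tau}\nabla G(\theta_{n-1})\rangle \ge \lambda_{\min}(A_{n-1,\tau})\norm{\nabla G(\theta_{n-1})}^2 \ge 0$ is a genuine descent term. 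The first task is therefore to record the two-sided spectral control of $A_{n,\tau}$ coming purely from the construction \eqref{AnNS}--\eqref{AntauNS}: the projection $\Pi_{\beta_n'}$ gives $\lambda_{\max}(A_{n,\tau}) = \norm{A_{n,\tau}}_{op} \le \beta_n'$ (a weighted average of matrices lying in the Frobenius ball of radius $\beta_k' \le \beta_n'$ stays in that ball), while the truncation $\mathbf{1}_{\{\norm{Q_n}\norm{Z_n}\le\beta_n\}}$ together with $\gamma_n\beta_n^2\beta_n'\le 1$ bounds the increment and yields a strictly positive lower bound for $\lambda_{\min}(A_{n,\tau})$.

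I would then expand $G$ to second order along $\theta_n = \theta_{n-1} - \nu_n A_{n-1,\tau}\nabla_h g(X_n,\theta_{n-1})$; since $\nabla G$ is $L_{\nabla G}$-Lipschitz by \textbf{(A2)},
\[
V_n \le V_{n-1} - \nu_n \langle \nabla G(\theta_{n-1}), A_{n-1,\tau}\nabla_h g(X_n,\theta_{n-1})\rangle + \tfrac{L_{\nabla G}}{2}\nu_n^2\norm{A_{n-1,\tau}\nabla_h g(X_n,\theta_{n-1})}^2 .
\]
Taking conditional expectation given $\mathcal{F}_{n-1}$ and using $\bkE[\nabla_h g(X_n,\theta_{n-1})\mid\mathcal{F}_{n-1}] = \nabla G(\theta_{n-1})$, the middle term becomes $-\nu_n\langle\nabla G(\theta_{n-1}),A_{n-1,\tau}\nabla G(\theta_{n-1})\rangle \le -\nu_n\lambda_{\min}(A_{n-1,\tau})\norm{\nabla G(\theta_{n-1})}^2$, while \textbf{(A1a)} controls the last term by $\tfrac{L_{\nabla G}C}{2}\nu_n^2\lambda_{\max}(A_{n-1,\tau})^2(1+V_{n-1})$. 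Setting $u_n := \tfrac{L_{\nabla G}C}{2}\nu_n^2\lambda_{\max}(A_{n-1,\tau})^2$, this reads $\bkE[V_n\mid\mathcal{F}_{n-1}] \le (1+u_n)V_{n-1} - \nu_n\lambda_{\min}(A_{n-1,\tau})\norm{\nabla G(\theta_{n-1})}^2 + u_n$, and the Robbins--Siegmund theorem applies once $\sum_n u_n < \infty$ a.s. This is exactly where the step-size windows ($\nu\in(1/2,1-\beta)$, $\gamma+\nu>3/2$, $\beta'\in(1-\gamma,\gamma-2\beta)$) enter, together with the \emph{sharp} a.s. bound $\sup_n\lambda_{\max}(A_{n,\tau})<\infty$; Robbins--Siegmund then gives that $V_n$ converges a.s. to a finite limit and that $\sum_n \nu_n\lambda_{\min}(A_{n-1,\tau})\norm{\nabla G(\theta_{n-1})}^2 < \infty$ a.s.

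To conclude, since $\lambda_{\min}(A_{n-1,\tau})$ is bounded below by a positive constant and $\sum_n\nu_n=\infty$ (because $\nu<1$), finiteness of the last series forces $\liminf_n\norm{\nabla G(\theta_{n-1})}=0$. Boundedness of $(\theta_n)$, inherited from $\sup_n V_n<\infty$ and the well-posedness of the problem, lets me extract $\theta_{n_k}\to\theta^\ast$ with $\nabla G(\theta^\ast)=0$; by uniqueness of the critical point $\theta^\ast=\theta$, so $V_{n_k}\to 0$ and hence the a.s. limit of $V_n$ is $0$, i.e. $G(\theta_n)\to G(\theta)$. A last subsequential argument, using that $\theta$ is the unique minimizer, upgrades this to $\theta_n\to\theta$ a.s. For the weighted average, writing $\theta_{n,\tau'} = \big(\sum_{k=0}^n \ln(k+1)^{\tau'}\big)^{-1}\sum_{k=0}^n\ln(k+1)^{\tau'}\theta_k$ as a convex combination with nonnegative weights summing to one and with $\ln(n+1)^{\tau'}/\sum_{k=0}^n\ln(k+1)^{\tau'}\to 0$, the Toeplitz/Ces\`aro lemma yields $\theta_{n,\tau'}\to\theta$ a.s. directly from $\theta_n\to\theta$.

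The hard part will be the two-sided a.s. spectral control of $A_{n,\tau}$: specifically, upgrading the crude projection bound $\lambda_{\max}(A_{n,\tau})\le\beta_n'\sim n^{\beta'}$ (which alone makes $\sum_n u_n$ diverge, since one cannot guarantee $2\nu-2\beta'>1$ in the allowed windows) to a genuine a.s. \emph{constant} upper bound, and simultaneously keeping $\lambda_{\min}(A_{n,\tau})$ bounded away from $0$. This control is coupled to the estimate $\theta_{n-1,\tau'}$ entering $Q_n$ through the Hessian $\nabla_h^2 g(X_n,\theta_{n-1,\tau'})$, so it cannot be read off from the $A_n$ recursion in isolation and must be reconciled with the parameter constraints; by contrast, the Robbins--Siegmund bookkeeping and the averaging step are routine.
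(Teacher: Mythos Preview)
Your overall strategy---Robbins--Siegmund on the excess risk $V_n=G(\theta_n)-G(\theta)$, descent lemma via \textbf{(A2)}, second-moment control via \textbf{(A1a)}, then $\liminf\|\nabla G(\theta_n)\|=0$ and local strong convexity to close---is exactly the paper's argument. The gap is in the spectral control of $A_{n,\tau}$, where you have the dependencies backwards.

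You claim that to make $\sum_n u_n<\infty$ you must upgrade the projection bound to a \emph{constant} a.s.\ bound $\sup_n\lambda_{\max}(A_{n,\tau})<\infty$, and that this is coupled to $\theta_{n-1,\tau'}$ through $Q_n$. Neither is true. The point of Assumption \textbf{(A4)} is that $\mathbb{E}[\|\nabla_h^2 g(X,h)\|_F^q]\le C_q$ \emph{uniformly in $h$}, so the recursion for $\|A_n\|_F^2$ can be analysed without knowing anything about $\hat\theta_n$. A Robbins--Siegmund argument on $\|A_n\|_F^2$ alone (Proposition~\ref{grandvp}) gives the \emph{a priori} polynomial bound
\[
\lambda_{\max}(A_{n,\tau}) = o\bigl(n^{1-\gamma}(\ln n)^{1+\delta}\bigr)\quad\text{a.s.},
\]
valid whatever estimate is plugged into $Q_n$. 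This is enough: $u_n\sim\nu_n^2\lambda_{\max}(A_{n,\tau})^2=o(n^{-2\nu+2(1-\gamma)}(\ln n)^{2+2\delta})$, and summability holds precisely because $2\nu+2\gamma-2>1$, i.e.\ $\gamma+\nu>3/2$. That is what this hypothesis is for; you never need a constant upper bound at this stage.

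Symmetrically, you assert $\lambda_{\min}(A_{n,\tau})$ is bounded below by a positive constant. The paper does not prove this either, and does not need it. Using only the truncation $\mathbf{1}_{\{\|Q_n\|\|Z_n\|\le\beta_n\}}$, the constraint $\gamma_n^2\beta_n^2\beta_n'\le\gamma_n$, and again \textbf{(A4)} (via Markov to control $\mathbb{P}[\|Q_n\|\|Z_n\|>\beta_n]$), one gets the a priori lower bound $\lambda_{\min}(A_{n,\tau})^{-1}=O(n^\beta)$ a.s.\ (Proposition~\ref{petitvp}), again independent of $\hat\theta_n$. Then $\sum_n\nu_n\lambda_{\min}(A_{n-1,\tau})\ge c\sum_n n^{-\nu-\beta}=\infty$ because $\nu<1-\beta$. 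So the circularity you worried about does not exist: both spectral bounds are decoupled from the parameter estimate by \textbf{(A4)}, and both are polynomial, not constant. Once you have them, your Robbins--Siegmund bookkeeping and the Toeplitz argument for $\theta_{n,\tau'}$ go through as written.
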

The proof is given in Section \ref{proof}. 
Note that the constraint $\gamma + \nu > 3/2$ is of a purely technical nature
and is crucial for the application of the Robbins-Siegmund Theorem and so that to get the consistency of the estimates. However, we believe this condition might not be necessary in practical applications.
We can now give the almost sure rate of convergence of the estimates.
\begin{theo}\label{rateTheta}
	Assume that Assumptions \textbf{(A1)} to \textbf{(A4)} hold. {Suppose also that $\frac{1-\gamma}{q-1}<\beta<\gamma - \frac{1}{2}$, $\gamma + \nu > 3/2$, $\nu \in (1/2,1-\beta)$ and $\beta ' \in (1-\gamma, \gamma-2\beta)$.}
	Then $\theta_{n}$ and $\theta_{n,\tau'}$ defined by \eqref{thetan} and \eqref{thetatau} satisfy for all $\delta >0$
	$$\norm{\theta_{n} - \theta}^2 = o\pa{\frac{\ln n^{1+\delta}}{n^{\nu}}} a.s. \quad\text{and}\quad \norm{\theta_{n,\tau'} - \theta}^2 = o\pa{\frac{\ln n^{1+\delta}}{n}} a.s. $$
	In addition, $A_n$ and $A_{n,\tau}$ defined by \eqref{AnNS} and \eqref{AntauNS} satisfy
	$$\norm{A_n - H^{-1}}^2 = o\pa{\frac{\ln n^{1+\delta}}{n^\gamma}} a.s.  \quad\text{and}\quad \norm{A_{n,\tau} - H^{-1}}^2 = o\pa{\frac{\ln n^{1+\delta}}{n}} a.s. $$
	Moreover, the estimates $\theta_{n,\tau'}$ defined by \eqref{thetatau} satisfy
	$$\sqrt{n}(\theta_{n,\tau'}-\theta)\xrightarrow[n\rightarrow\infty]{\mathcal{L}}\mathcal{N}(0,H^{-1}\Sigma H^{-1}),$$
	where $\Sigma = \bkE\cro{\nabla_hg(X,\theta)\nabla_hg(X,\theta)^T}$.
\end{theo}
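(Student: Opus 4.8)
The plan is to read the four convergence rates and the central limit theorem as a single coupled statement and to prove them by a bootstrap that alternates between the iterate $\theta_n$ and the gain $A_{n,\tau}$. Throughout I write $\delta_n=\theta_n-\theta$ and $\xi_n=\nabla_h g(X_n,\theta_{n-1})-\nabla G(\theta_{n-1})$, which is a martingale increment for the natural filtration $\mathcal F_{n-1}$, and I use the second-order expansion $\nabla G(\theta_{n-1})=H\delta_{n-1}+\Delta_n$ with $\norm{\Delta_n}\lesssim\norm{\delta_{n-1}}^2$ on $\mathcal B(\theta,r)$, which is legitimate by \textbf{(A2)}--\textbf{(A3)} once consistency (Theorem \ref{consisTheta}) places $\theta_{n-1}$ in that ball eventually. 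Plugging this into \eqref{thetan} gives $\delta_n=(I_d-\nu_n A_{n-1,\tau}H)\delta_{n-1}-\nu_n A_{n-1,\tau}(\Delta_n+\xi_n)$. Since $\theta_{n-1,\tau'}\to\theta$ (Theorem \ref{consisTheta}), the plug-in consistency together with the Robbins--Monro structure of \eqref{AnNS}--\eqref{AntauNS} already forces $A_{n-1,\tau}\to H^{-1}$, so $A_{n-1,\tau}H\to I_d$ and the symmetric part of the drift is eventually $\succeq(1-\varepsilon)I_d$; combined with the eigenvalue control furnished by the projection $\Pi_{\beta_n'}$ and the truncation, this yields a Robbins--Siegmund inequality $\E[\norm{\delta_n}^2\mid\mathcal F_{n-1}]\le(1-2\nu_n(1-o(1)))\norm{\delta_{n-1}}^2+C\nu_n^2(1+\norm{\delta_{n-1}}^2)$, from which the standard stochastic-approximation lemma (as in \cite{pelletier1998almost}) delivers the first rate $\norm{\theta_n-\theta}^2=o(\ln n^{1+\delta}/n^\nu)$. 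I stress that this step needs only the \emph{consistency}, not a rate, of the gain.

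Next I would upgrade the averaged rates. Averaging the bound above crudely against the slowly varying weights $\ln(k+1)^{\tau'}$ gives a preliminary polynomial rate $a>0$ for $\theta_{n,\tau'}$, which I inject into Theorem \ref{consisAn} (with $\hat\theta_n=\theta_{n,\tau'}$) to obtain a first genuine rate for $A_n$ and $A_{n,\tau}$. To get the sharp rate I write $A_{n-1,\tau}H=I_d+R_n$ with $R_n\to0$ and rearrange the recursion into $\delta_{n-1}=(\delta_{n-1}-\delta_n)/\nu_n-A_{n-1,\tau}\Delta_n-A_{n-1,\tau}\xi_n-R_n\delta_{n-1}$. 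Summing against the weights and using Abel summation in the manner of \cite{mokkadem2011generalization,boyer2022asymptotic}, the telescoping plus the slow variation of the weights isolates the leading contribution $-\tfrac1n\sum_{k\le n}H^{-1}\xi_k$ to $\theta_{n,\tau'}-\theta$, while the second-order term $A_{n-1,\tau}\Delta_n$ (controlled by the $n^{-\nu}$ rate of $\delta_n$ and \textbf{(A1b)}), the gain-error term $R_n\delta_{n-1}$, and the boundary terms are all shown to be $o(n^{-1/2})$ in norm; the constraints $\nu\in(1/2,1-\beta)$ and $\gamma+\nu>3/2$ are exactly what make these cross terms summable at the $n^{-1/2}$ scale. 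This gives $\norm{\theta_{n,\tau'}-\theta}^2=o(\ln n^{1+\delta}/n)$, i.e. the rate $a=1$, and re-injecting $a=1$ into Theorem \ref{consisAn} (in the parameter range ensuring $2\beta(q-1)\ge 1$ discussed after that theorem) yields the sharp $o(\ln n^{1+\delta}/n^\gamma)$ for $A_n$ and $o(\ln n^{1+\delta}/n)$ for $A_{n,\tau}$.

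Finally, for the central limit theorem I would use the decomposition just obtained to show $\sqrt n(\theta_{n,\tau'}-\theta)=-\tfrac1{\sqrt n}\sum_{k=1}^n H^{-1}\xi_k+o_{\Pr}(1)$, and then apply the Lindeberg martingale CLT to the martingale $\sum_k H^{-1}\xi_k$: its conditional covariances converge to $H^{-1}\Sigma H^{-1}$ because $\theta_{k-1}\to\theta$ and $h\mapsto\E[\nabla_h g(X,h)\nabla_h g(X,h)^T]$ is continuous at $\theta$, while the Lindeberg condition follows from the fourth-moment control \textbf{(A1b)}. The main obstacle throughout is the coupling together with the control of the gain-error remainders: the gain $A_{n-1,\tau}$ driving $\theta_n$ depends on $\theta_{n,\tau'}$ and conversely, so the bootstrap must be ordered so that each rate is established before it is used, and above all the terms carrying $A_{n-1,\tau}-H^{-1}$ must be shown not to contaminate the $n^{-1/2}$ limit. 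This last point is the genuinely delicate step, and it is precisely what the technical restrictions $\gamma+\nu>3/2$ and $\nu<1-\beta$ are engineered to guarantee.
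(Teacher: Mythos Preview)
Your overall architecture—bootstrap between $\theta_n$ and $A_{n,\tau}$, then a Polyak--Ruppert/Abel decomposition for the averaged iterate, then a martingale CLT for the leading term—matches the paper's, and the decomposition you write for $\theta_{n,\tau'}-\theta$ is essentially the one used there.

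The one substantive difference is how the bootstrap is \emph{initialized}. You assert that consistency of $\theta_{n,\tau'}$ ``already forces'' $A_{n-1,\tau}\to H^{-1}$, and then run a Robbins--Siegmund argument on $\|\delta_n\|^2$ with drift $1-2\nu_n(1-o(1))$ to obtain the sharp rate $o(\ln n^{1+\delta}/n^\nu)$ in a single step. This is valid in principle, but the implication ``$\theta_{n,\tau'}\to\theta\Rightarrow A_{n,\tau}\to H^{-1}$'' is not free: Theorem~\ref{consisAn} as stated requires a polynomial rate $a>0$ on the plug-in, and extracting mere consistency of $A_n$ from mere consistency of $\hat\theta_n$ means rerunning that proof (the $r_{1,n}$ and $s_n$ terms go to zero but without rate, so the stabilization lemma has to be invoked with $v_n\to0$ unspecified). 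The paper avoids this circularity by \emph{not} using any convergence of $A_{n,\tau}$ at this stage: it relies only on the a~priori eigenvalue bounds $\lambda_{\max}(A_{n,\tau})=o(n^{1-\gamma}\ln n^{1+\delta})$ and $\lambda_{\min}(A_{n,\tau})^{-1}=O(n^\beta)$ (Propositions~\ref{grandvp} and~\ref{petitvp}) inside a Robbins--Siegmund inequality for the \emph{risk} $G(\theta_n)-G(\theta)$, which yields a first crude rate $o(n^{-\eta})$ for every $\eta<2\gamma+2\nu-3$. This is precisely where the hypothesis $\gamma+\nu>3/2$ is actually used—it makes $\eta>0$—rather than in the averaging step as you suggest. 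That crude $\eta$ is then fed into Theorem~\ref{consisAn} to produce genuine rates on $A_n,A_{n,\tau}$, after which the sharp $n^{-\nu}$ rate for $\theta_n$ is quoted from Theorems~4.2--4.3 of \cite{boyer2022asymptotic}. Your direct route is slightly cleaner once $A_{n,\tau}\to H^{-1}$ is in hand, but the paper's route via eigenvalue bounds is more self-contained and explains the role of $\gamma+\nu>3/2$ correctly.

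For the CLT, the paper's written proof in fact stops at the almost-sure rate for $\theta_{n,\tau'}$; your Lindeberg argument on $\sum_k H^{-1}\xi_k$, with conditional covariances converging by continuity at $\theta$ and the Lindeberg condition supplied by \textbf{(A1b)}, is exactly the natural completion.
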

The proof is given in Section \ref{proof}. 
The Universal Weighted Averaged Stochastic Newton estimates so achieve the asymptotic efficiency, 
and so, under very weak assumptions.

\begin{rmq}
Mention that for estimating parameter $\theta$,
it is also possible to consider the following simpler algorithm,
which we refer to as the Universal Stochastic Newton Algorithm,
and  only relies on $A_n$:
\begin{align*}
\widehat{P}_n &= \Achap_{n-1}Z_n \\
\widehat{Q}_n &= \nabla_h^2 g(X_n, \wtheta_{n-1}) Z_n \\
\Achap_n & = \Achap_{n-1} - \gamma_n\pa{ \widehat{P}_n \widehat{Q}_n^T +  \widehat{Q}_n \widehat{P}_n^T -2 I_d}
\mathbf{1}_{\acc{\norm{\widehat{Q}_n} \norm{Z_n}}\le \beta_n}  \\ 
\wtheta_n & = \wtheta_{n-1} - \nu_n \Achap_{n-1}\nabla_h g(X_n , \wtheta_{n-1}). 
\end{align*}
By following the same scheme of proof as for Theorem \ref{rateTheta}, 
one could check that:
$$ \norm{ \wtheta_n - \theta}^2 = O \left( \frac{\ln n}{n^{\nu}} \right) \quad a.s.$$
However, it can be observed that the convergence rate of $\wtheta_n$ is not optimal.
Nevertheless, this algorithm has the merit of being much simpler. 
In addition, mention that following the reasoning presented by \cite{bercu2020efficient}, 
one could take a step sequence of the form  $\nu_{n} = \frac{1}{n}$ 
leading to the Stochastic Newton algorithm.
However, we are unfortunately not able to obtain the consistency of the estimates in this context.
\end{rmq}

\section{Applications}\label{sec::simu}

The simulation section of this paper focuses on evaluating the performance of our novel methods, 
called Universal Stochastic Newton Algorithm (USNA)  and Universal Weighted Averaged Stochastic Newton Algorithm (UWASNA).
We begin by analyzing the performance of USNA and UWASNA
in the context of logistic regression and the geometric median estimation.
In both contexts, it was already feasible to employ second-order algorithms 
such as the Stochastic Newton Algorithm (SNA) and its Weighted Averaged version (WASNA). 
These two algorithms use the Riccati formula to recursively compute the inverse of the Hessian estimator. 
By demonstrating comparable results to SNA and WASNA, 
we establish USNA and UWASNA as viable alternatives with efficient performance. 
Additionally, we investigate the applicability of our method in challenging scenarios where using the Riccati formula in SNA is not feasible, particularly in estimating $p$-means and 
parameters  of a spherical distribution. 
In these two cases,  we compare the performances of USNA and UWASNA with the one of 
the Averaged Stochastic Gradient Descent (ASGD) introduced by \cite{polyak1992acceleration}.
Through comprehensive simulations, our goal is to verify that USNA and UWASNA consistently exhibit favourable performances even in these contexts. 
To close this section, we extend our evaluation to real-world datasets to showcase the practicality of deploying USNA and UWASNA algorithm in applications. 

%
%
\subsection{Choice of the hyperparameters}

In our experiments, the choice of hyperparameters involved in the different algorithms, 
plays a significant role in achieving desired outcomes. 
The decision to set these values is based on both theoretical justifications and empirical observations.

\begin{enumerate}
	\item \textbf{Setting of \( \nu_n \) for USNA}: 
	Despite the lack of a theoretical proof demonstrating the convergence rate of USNA 
	when \( \nu_n=1/n \), this setting is adopted in our experiments for a direct comparison with SNA. 
	Empirical results, as presented later, validate that this choice is effective in practice.%
	\item \textbf{Conditions on \( \beta \) and \( \gamma \)}: 
	 {	In practice, we find that the condition \( \beta < \gamma -1/2 \) is not necessary for the convergences of the estimates, and we advocate for its removal. 
		On the contrary, condition \( \beta_{n+1}\gamma_{n+1} \leq 1/2 \) 
		is essential. 
		It ensures the positiveness of the estimate of the inverse of the Hessian. 
		For this reason, in all simulations, we set \( \beta_{n} = \frac{1}{2}n^{3/4} \) and $\gamma_n = n^{-3/4}$.
}
	\item \textbf{Initialization of Estimators}: 
	For initializing the estimators of the Hessian inverse, we consistently use \( S^{-1}_0 = I_d \) for both SNA and WASNA. 
	Similarly, \( A_0 = I_d \) is chosen for USNA and UWASNA.
\end{enumerate}

\subsection{Comparison with Riccati Newton}

The objective of this section is to demonstrate the comparable performance of USNA and UWASNA when contrasted with SNA and WASNA, particularly in scenarios 
where the use of the Riccati formula is applicable
to recursively compute the inverse  of an estimator of the Hessian.
Let us begin by revisiting this context.
If we can estimate the Hessian matrix $H = \nabla^2 G(\theta)$ by an estimator
of the form $S_n / n$ with $S_n$ defined by 
$S_n = \sum_{j=1}^n \varphi_j \varphi_j^T$ 
where $(\varphi_n)_{n\geq 1}$ is a sequence of random vectors of $\bkR^d$,
then, thanks to the Riccati  formula \ { (also called Sherman-Morrison formula)},
  we can recursively calculate matrix $S_n^{-1}$ for any $n\geq 1$:
\begin{equation}
\label{RiccatiFormula}
S_n^{-1} = S_{n-1}^{-1} - \dfrac{1}{1+\varphi_n^T S_{n-1}^{-1} \varphi_n} S_{n-1}^{-1} \varphi_n \varphi_n^T S_{n-1}^{-1},
\end{equation} 
with $S_0 = I_d$ to avoid the invertibility problem.
This formula finds application in various scenarios, 
as previously demonstrated by \cite{bercu2020efficient}, for instance, 
to obtain efficient stochastic Newton algorithms.
In light of this, we can define the stochastic Newton algorithm (SN) 
for estimating parameter $\theta$ as followed:
\begin{align}
\label{SNA1}
U_n & = S_{n-1}^{-1} \varphi_n  \\
\label{SNA2}
S_n^{-1} & = S_{n-1}^{-1} - (1 + \varphi_n^T U_n)^{-1} U_n U_n^T \\
\label{SNA3}
\theta_n^{SN} &= \theta_{n-1}^{SN} - S_n^{-1} \nabla_{h} g(X_n, \theta_{n-1}^{SN}) 
\end{align}
where $S_0^{-1} = I_d$ and $\theta_0^{SN}$ 
is arbitrarily chosen.
 Note that the random vector $\varphi_n$ is dependent on the current observation $X_n$ 
 and the previous estimation $\theta^{SN}_{n-1}$.
The Weighted Averaged Stochastic Newton Algorithm is defined by: 
\begin{align*}
	\overline{U}_n & = \overline{S}_{n-1}^{-1} \overline{\varphi}_n  \\
	\overline{S}_n^{-1} & = \overline{S}_{n-1}^{-1} - (1 + \overline{\varphi}_n^T \overline{U}_n)^{-1} \overline{U}_n\overline{U}_n^T \\
    \overline{\theta}_n&= \overline{\theta}_{n-1} - \gamma_{n+1}\overline{S}_n^{-1} \nabla_hg(X_n, \overline{\theta}_{n-1}) \\
    \theta_n^{WASN} &= \pa{1-\frac{\ln (n+1)^{\tau'}}{\sum_{k=0}^n \ln (k+1)^{\tau'}}}\theta_{n-1}^{WASN}+\frac{\ln (n+1)^{\tau'}}{\sum_{k=0}^n \ln (k+1)^{\tau'}} \overline{\theta}_n  
\end{align*}
where $S_0^{-1} = I_d$, $\overline{\theta}_0$ and $\theta_0^{WASN}$
are arbitrarily chosen,
the random vector $\overline{\varphi}_n$ is dependent on the current observation $X_n$ 
and the previous estimation $\theta_{n-1}^{WASN}$.

\subsubsection{Logistic regression}

Let $(X, Y)$ be  a random vector taking values in $\bkR^p \times \{0, 1\}$
and let us set $\phi = (1, X^T)^T$.
In the binary logistic regression framework,
function $G$ to minimize is defined for any $h \in \bkR^{p+1}$ by:
$$G(h) = \bkE\cro{\log( 1+ \exp( h^T \phi )) - h^T\phi Y} 
= \bkE\cro{g(X, Y, h)}$$
where the conditional distribution of the binary response  $Y$ knowing $\phi$ is a Bernoulli distribution
of parameter $\pi(\theta^T \phi)$ with for any $x\in\bkR$, $\pi(x) = \exp(x) / (1  + \exp(x))$
and $\theta \in \bkR^{p+1}$ is the unknown parameter to be estimated.
It is easy to show that
$$\theta = \arg\min_{h \in \bkR^{p+1}} G(h)$$
and $\theta$ is the unique solution of equation $\nabla G(h) = 0$.
In addition, we have
$$H = \bkE\cro{a(\theta^T \phi) \phi \phi^T}\quad \mbox{\rm with}\quad 
a(z) = \pi(z) (1 - \pi(z)).$$
Let $(\phi_n,Y_n)_{n\geq 1}$ be a sequence of independent random vectors in $\bkR^{p+1} \times \{0, 1\}$ 
with the same distribution as  $(\phi, Y)$.
In this context, \cite{bercu2020efficient}
uses algorithm (\ref{SNA1})-(\ref{SNA3}) to estimate $\theta$
with $\varphi_n = \sqrt{a(\phi_n^T \theta_{n-1}^{SN})} \phi_n$ and 
$\nabla_{h} g(X_n, \theta_{n-1}^{SN}) = - \Phi_{n} \left( Y_n - \pi(\phi_n^T\theta_{n-1}^{SN}) \right)$.

We conduct extensive simulations to evaluate the performance of our novel methods, USNA and UWASNA, 
in comparison to SNA and WASNA.
For this purpose,  we consider the logistic regression model introduced by \cite{bercu2020efficient} 
where $p=10$ and the true coefficients are set as follows:
$$\theta= (0, 3, -9, 4, -9, 15, 0, -7, 1, 0)^T.$$
We compare USNA and UWASNA against SNA and WASNA in terms of their ability to accurately estimate the true coefficients. 
The evaluation of the algorithms' performance is carried out using the mean squared error (MSE) metric. 
We simulate $N=100$ independent sample of size $n=10\,000$.
The results are averaged to mitigate the effects of sampling fluctuations.
For all algorithms, we initialize the estimate of the parameter with $\theta_{init} = \theta + e\epsilon,$
where $\epsilon \sim \mathcal{N}\pa{0,I_{p+1}}$ and $e=1$ or $2$.

\begin{figure}[h!]
	\centering
	\includegraphics[width=0.8\textwidth]{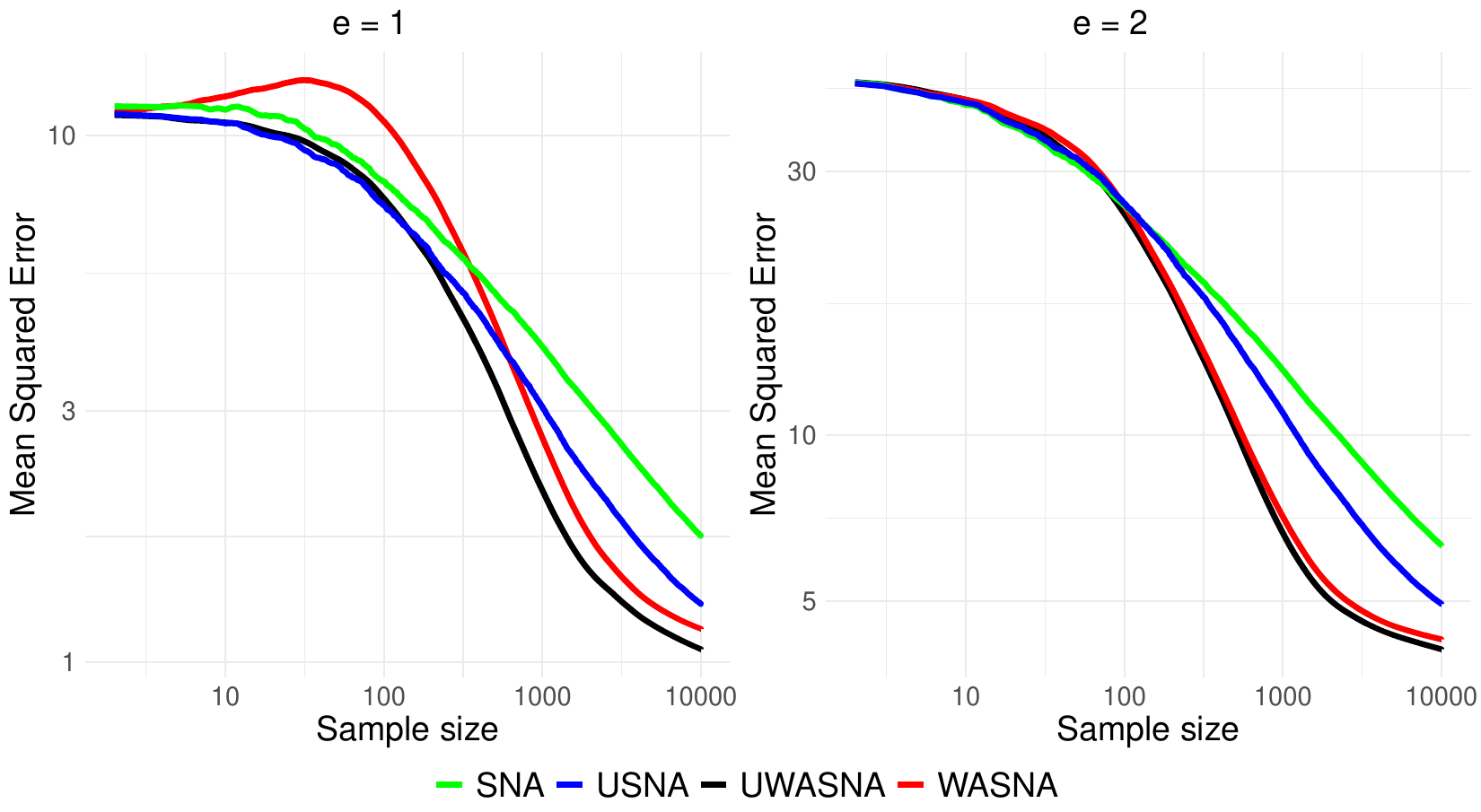}
	\caption{Evolution of the mean squared error with respect to the sample size for logistic regression.}
	\label{graph::log}
\end{figure}

As shown in the figure \ref{graph::log}, 
the weighted averaged estimators converge more rapidly than the other two methods.
Both USNA and UWASNA perform comparably to SNA and WASNA in accurately estimating the true coefficients of the logistic regression model. 
Remarkably, USNA and UWASNA achieve this without using the Riccati formula.

\subsubsection{Geometric Median}
Next, we conduct simulations in the context of geometric median estimation for a multivariate distribution.
We focus on the model introduced by \cite{godichon2023online}. 
We generate $n=10\,000$ copies of the random vector $X$ of $\mathbb{R}^p$ with $p=10$, where $X \sim \mathcal{N}\pa{0,\mathbf{\Sigma}}$ with $\mathbf{\Sigma}_{i,j} = |i-j|^{0.5}$. 
Recall that the geometric median is defined by:
$$m = \argmin_{h\in\mathbb{R}^p} \mathbb{E}\cro{\norm{X-h}-\norm{X}}.$$
In this model, the result leads to  $m = (0, \ldots,0)^T$ in this model. 
In addition, the Hessian matrix $H$ is defined by:
$$H = \mathbb{E}\cro{\frac{1}{\norm{X-m}}\pa{I_p - \frac{(X-m)(X-m)^T}{\norm{X-m}^2}}}.$$
In this context, algorithm (\ref{SNA1})-(\ref{SNA3}) can be used to estimate $m$
taking $\nabla_{h} g(X_n, m_{n-1}^{SN}) = -\frac{X_{n}-m_{n-1}}{\norm{X_{n}-m_{n-1}}}$ and
$\varphi_n = \frac{\sqrt{\norm{X_n-m_{n-1}}}}{\alpha_n}\pa{\nabla_{h} g(X_n, m_{n-1}^{SN}+\alpha_n Z_n) - \nabla_{h} (X_n, m_{n-1}^{SN})}$, where $\alpha_n = \frac{1}{n\ln{(n+1)}}$ and $(Z_n)_{n \geq 1}$ is a sequence of independent standard Gaussian vectors.

For a comprehensive evaluation of our methods' effectiveness in geometric median estimation, we also compare them against two baselines : SNA and WASNA.
For all four algorithms, we initialize the estimate of the geometric median with
$m_{init} = e\epsilon,$
where $\epsilon \sim \mathcal{N}\pa{0,I_{p+1}}$ and $e=1$ or $2$.
Throughout the simulations, we recorded the MSE of the estimated medians for the four algorithms. 
The simulation results were averaged over multiple iterations ($N = 100$).

\begin{figure}[h!]
	\centering
	\includegraphics[width=0.8\textwidth]{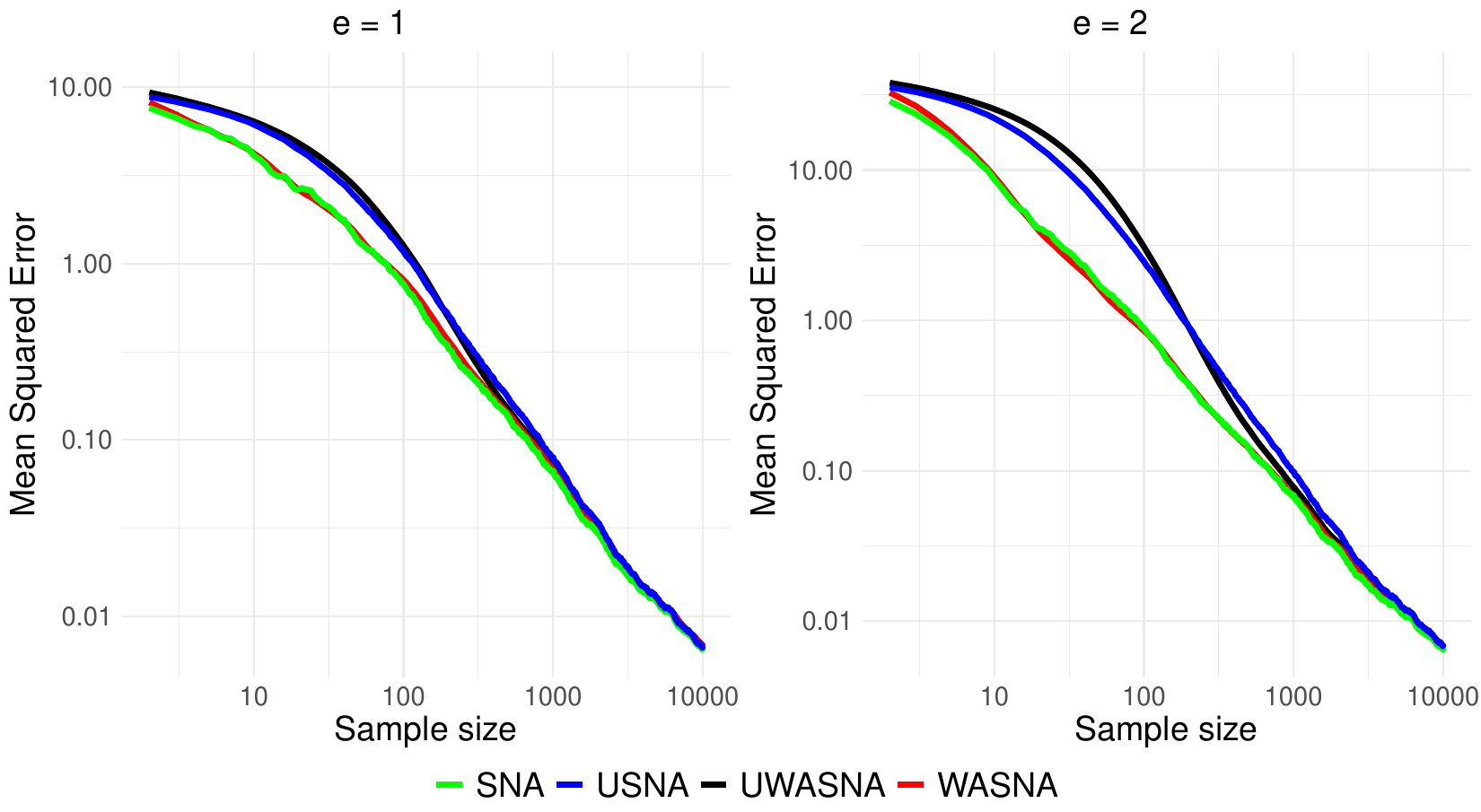}
	\caption{Evolution of the mean squared error with respect to the sample size for geometric median estimation.}
	\label{graph::median}
\end{figure}

In Figure \ref{graph::median}, the results consistently indicate that when the sample size is relatively small (around 100), USNA and UWASNA converge slightly slower. However, beyond that point, they achieve performance on par with both WASNA and SNA in estimating the geometric median.

\subsection{Cases where the Riccati formula cannot be used}
In this section, we focus on the scenarios where using the Riccati formula in SNA or WASNA is not feasible. 
We explore the performance of USNA and UWASNA in contrast to an alternative 
averaged stochastic gradient-based method ("ASGD") proposed by \cite{polyak1992acceleration}, 
which is defined as:
\begin{align}
	\label{ASGD1}
	\theta_n^{SGD} & = \theta_{n-1}^{SGD}- \eta_n \nabla_{h} g(X_n, \theta_{n-1}^{SGD})  \\
	\label{ASGD2}
	\theta_n^{ASGD} &= \theta_{n-1}^{ASGD} + 1/n(\theta_n^{SGD}-\theta_{n-1}^{ASGD}) 
\end{align}
where $(\eta_n)_{n\geq 1}$ is a sequence of learning rates and 
$\theta_0^{ASGD}=\theta_0^{SGD}$.

\subsubsection{Spherical Distribution}
In this paragraph,  we focus on the estimation of the parameters of a spherical distribution
\citep{godichon2017averaged}.  {This is an interesting example since we are able to explicitely calculate $\Sigma^{-1/2}$. Nevertheless we are not able to verify that our assumptions are satisfied in this context.}
The aim of the task is to fit a sphere onto a 3D point cloud with noise. 
In this context, we assume that the observations represent independent realizations of a random vector $X$, which is defined as 
$$X= \mu + rWU,$$
where $U$ is uniformly distributed on the unit sphere of $\mathbb{R}^3$, 
$W\sim\mathcal{U}\pa{[1-\delta,1+\delta]} $ with $\delta>0$, $W$ and $U$ are independent. The radius $r>0$ and the center $\mu \in \mathbb{R}^3$ are parameters to be estimated.
The unknown parameter $\theta = (\mu,r)^T$ is a local minimizer of the function 
$G:\mathbb{R}^3\times\mathbb{R}_+^* \longrightarrow \mathbb{R}$ 
defined for all $h=(a,b) \in \mathbb{R}^3\times\mathbb{R}_+^*$ by:
$$G(h) := \mathbb{E}\cro{g(X,h)} =\frac{1}{2}\mathbb{E}\cro{\pa{\norm{X-a}-b}^2}.$$
In this scenario, one can use algorithm (\ref{ASGD1})-(\ref{ASGD2}) to estimate $\theta$ with
$$\nabla_hg(X,h)= \begin{pmatrix}
	a-X+\frac{b(X-a)}{\norm{X-a}} \\
	b-\norm{X-a}
\end{pmatrix}.$$
Second-order methods USNA and UWASNA can also be used with
$$\nabla^2_hg(X,h)=\begin{pmatrix}
	\pa{1-\frac{b}{\norm{X-a}}}I_3+\frac{b(X-a)(X-a)^T}{\norm{X-a}^3} & \frac{X-a}{\norm{X-a}}\\
	\frac{(X-a)^T}{\norm{X-a}} & 1
\end{pmatrix}.$$
We emphasize that in this specific case, neither the conventional Stochastic Newton Algorithms (SNA) nor the Weighted Averaged version (WASNA) using the Riccati formula are applicable due to the nature of the problem.  Thus, we conduct simulations to compare the performance of the USNA, UWASNA, and ASGD.
The synthetic datasets were generated with a sample size of $10\,000$ and the true parameters of the spherical distribution were set as follows: $\mu = (0,0,0)$ and $r=2$. In addition, we set $\delta=0.2$, which results in a Hessian matrix with eigenvalues of different order sizes. For all three algorithms, we initialize the estimate of the parameter by
$$\theta_{init} = (\mu_{init},r_{init})^T = (0,0,0,2)^T + e\epsilon $$
where $\epsilon \sim \mathcal{N}\pa{0,I_{4}}$ and $e =0.5$ or $1$.
Multiple iterations ($N = 100$) were performed to reduce the impact of sampling variations.

\begin{figure}[h!]
	\centering
	\includegraphics[width=0.8\textwidth]{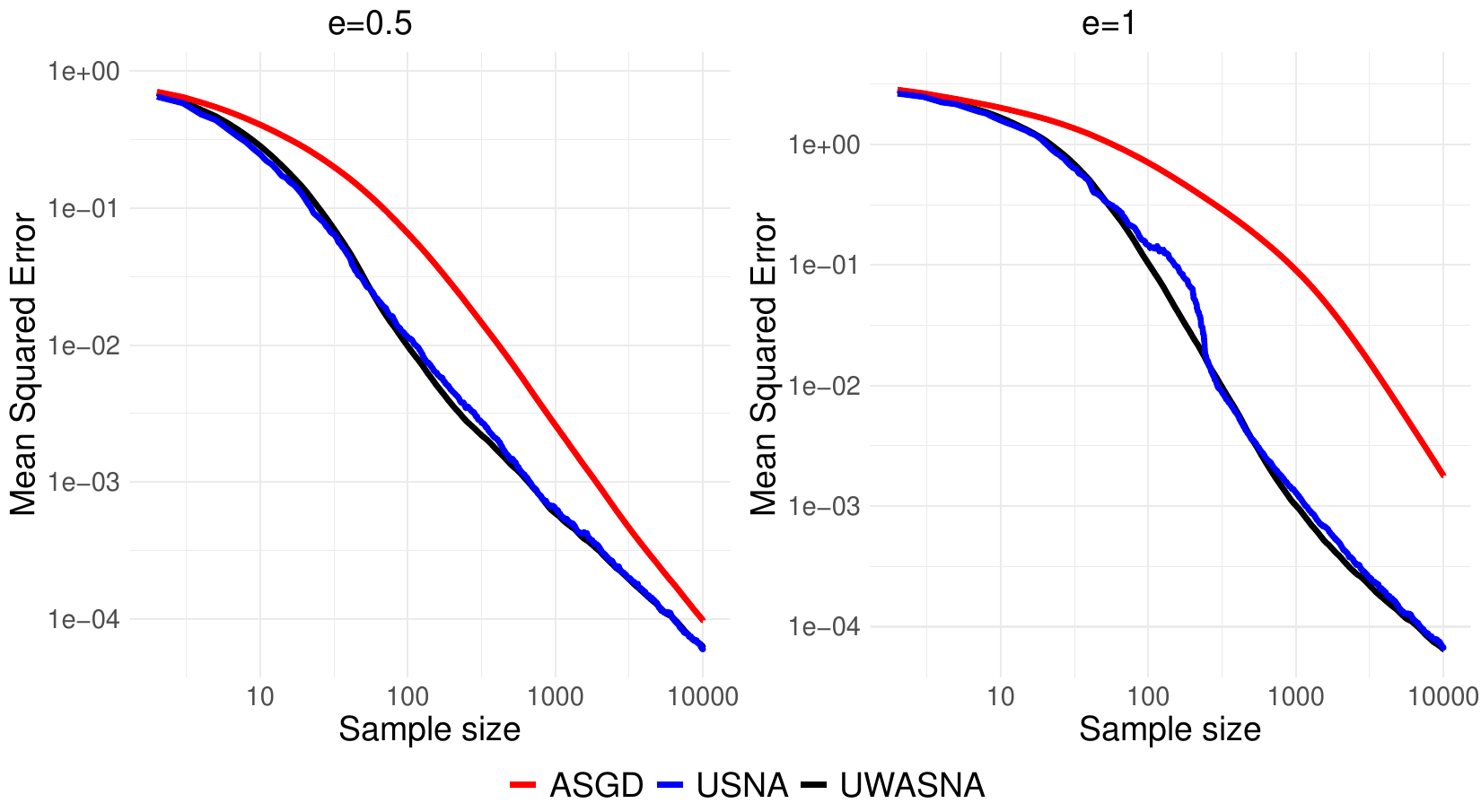}
	\caption{Evolution of the mean squared error with respect to the sample size for parameters estimation in a spherical Gaussian distribution.}
	\label{graph::sphere}
\end{figure}

Figure \ref{graph::sphere} illustrates the comparison between the performances of USNA, UWASNA and ASGD in terms of the mean squared error (MSE) of the estimated parameters. 
Throughout the simulations, UWASNA and USNA demonstrate superior performance in accurately estimating the parameters of the spherical distribution when compared to the gradient-based method.  
Additionally, the Hessian matrix $H$ of the model can be explicitly calculated \citep{godichon2017averaged}, one has
$$H = \begin{pmatrix}
	I_3-\frac{2}{3}I_3\mathbb{E}\cro{W}\mathbb{E}\cro{W^{-1}} & 0 \\
	0 & 1
\end{pmatrix}.$$
Note that this matrix is diagonal, making its inverse computation straightforward. 
Therefore, we investigate the Frobenius norm of the difference between the estimated matrix $A_n$ and the true matrix $H^{-1}$.  From Figure \ref{graph::sphereAn}, it's evident that our methods provide a good estimation of the inverse of the Hessian matrix. Moreover, UWASNA offers a better estimation compared to USNA.

\begin{figure}[h!]
	\centering
	\includegraphics[width=0.8\textwidth]{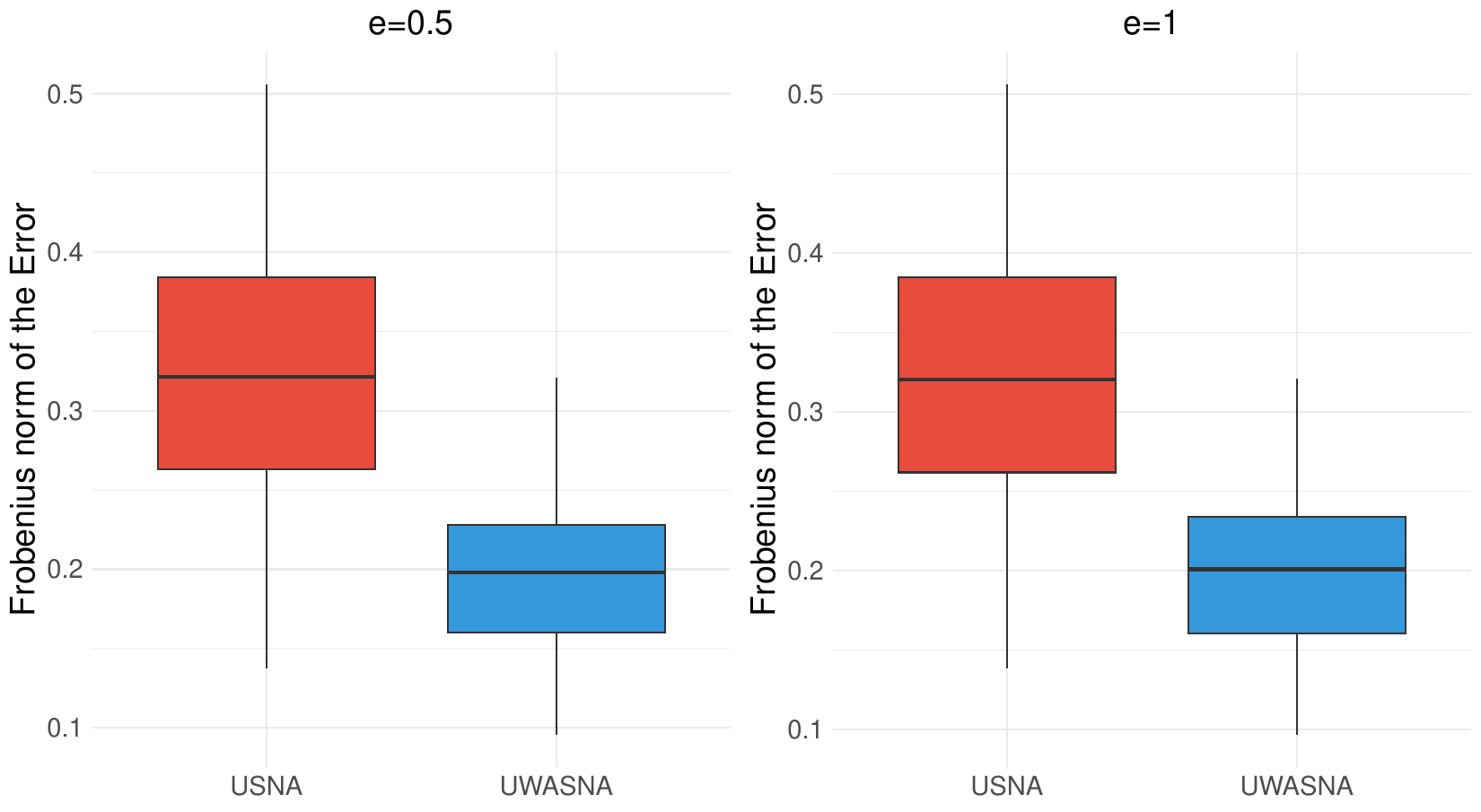}
	\caption{Frobenius norm of the difference between the estimated matrix $A_n$ and the true matrix $H^{-1}$.}
	\label{graph::sphereAn}
\end{figure}

\subsubsection{p-means}
Now we focus on the estimation of p-means of a multivariate distribution
\citep{frechet1948elements}. 
We consider a random vector $X$ of $\mathbb{R}^d$ with $d=40$, 
where $X \sim \mathcal{N}\pa{0,\mathbf{\Sigma}}$ 
with $\mathbf{\Sigma}_{i,j} = |i-j|^{0.5}$. 
The p-mean $m$ of $X$  is defined as the minimizer of the 
functional $G_p : \mathbb{R}^d \rightarrow \mathbb{R}$ 
given for all $h \in \mathbb{R}^d$ by:
$$G_p(h) = \dfrac{1}{p}\,\mathbb{E}\cro{\norm{X-h}^p},$$
where $1 \le p < +\infty$.
Note that in our model $m = (0,\ldots,0)^T$. 
We can easily verify that the gradient and the Hessian of $G_p$ are given by:
\begin{align*}
	\nabla G_p(h) &= -\mathbb{E}\cro{(X-h)\norm{X-h}^{p-2}},\\
	\nabla^2 G_p(h) &= \mathbb{E}\cro{\norm{X-h}^{p-2}\pa{I_d-(2-p)\frac{(X-h)(X-h)^T}{\norm{X-h}^2}}}.
\end{align*}
We aim to compare the performance of USNA, UWASNA, and ASGD 
for estimating  the p-mean of $X$ through simulations.
We consider the case  $p=1.5$ and
we simulate $N=100$ independent samples of size $n=10\,000$.
For all three algorithms, we initialize the estimate with $m_{init} = e\epsilon$ 
where $\epsilon \sim \mathcal{N}(0,I_d)$ and $e=1$ or $2$.

\begin{figure}[h!]
	\centering
	\includegraphics[width=0.8\textwidth]{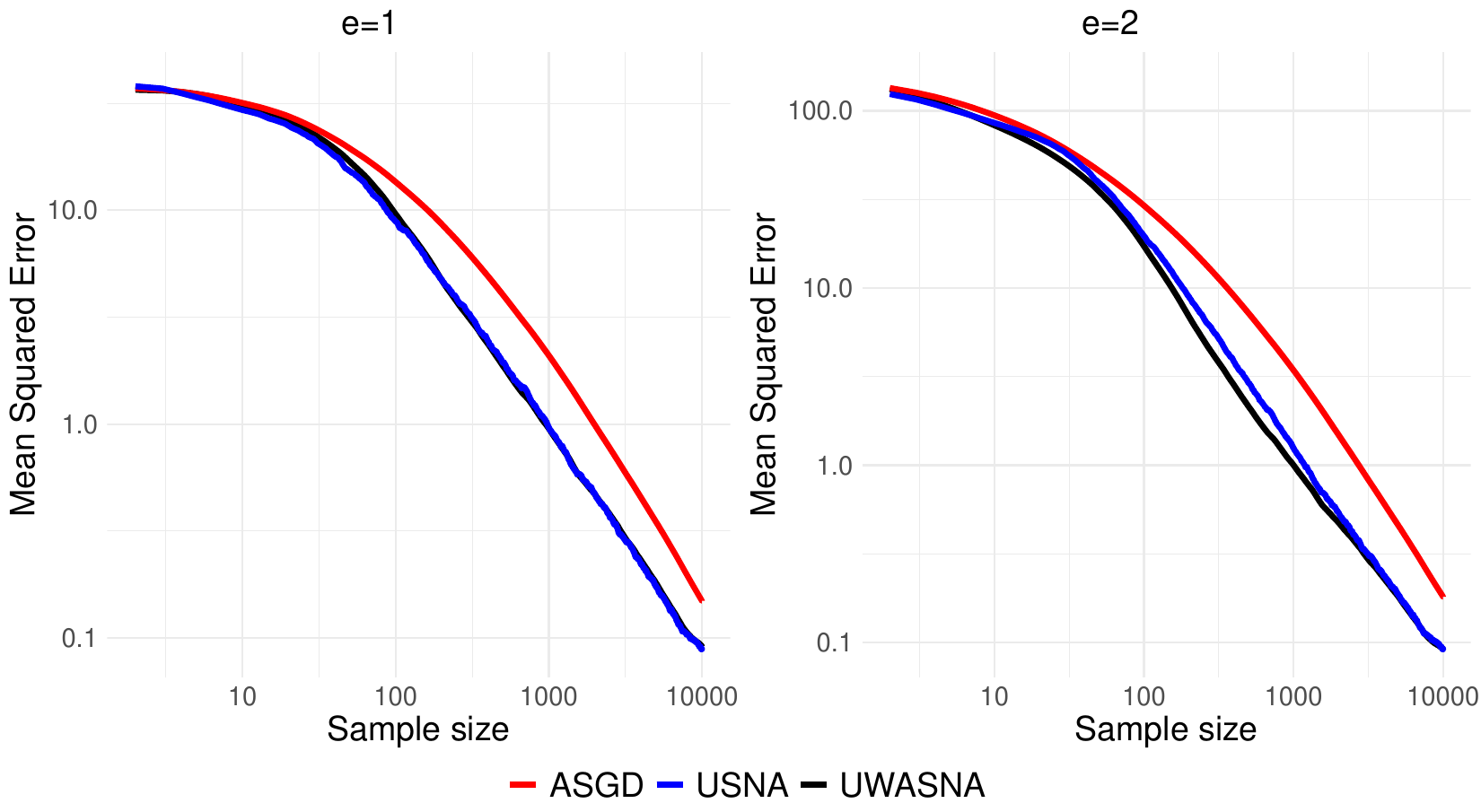}
	\caption{Evolution of the mean squared error with respect to the sample size for p-means estimation.}
	\label{graph::pmeans}
\end{figure}

As depicted in the Figure \ref{graph::pmeans}, 
we plotted the MSE versus sample size. 
It is evident that USNA and UWASNA consistently outperforms ASGD in estimating the p-means. 
Their superior performance can be attributed to the incorporation of information from the Hessian matrix. 
These results further emphasize the advantage of using USNA and UWASNA as alternative methods when it is impossible to use the Riccati formula.

\subsection{Application to real data}
 {
We now compare the performance of various algorithms across several real-world datasets. 
Table~\ref{tab:datasets} summarizes the main characteristics of the different dataset used in our experiments.
\begin{table}[H]
	\centering
	\begin{tabular}{lccc}
		\hline
		Dataset   & \# Features & Training Set Size & Testing Set Size \\ 
		\hline
		Epsilon   & 2000        & 400,000           & 100,000          \\
		Mushrooms & 112         & 6,499             & 1,625            \\
		Web Page  & 300         & 3,470             & 46,279           \\
		MNIST     & 784         & 11,378            & 1,924            \\
		Phishing  & 68          & 5,527             & 5,528            \\
		Adult     & 123         & 32,561            & 16,281           \\
		COVTYPE   & 54          & 290,506           & 290,505          \\
		\hline
	\end{tabular}
	\caption{Key characteristics of the datasets used in this study.}
		\label{tab:datasets}
\end{table}
Epsilon \citep{chang2011libsvm} is a large-scale binary classification dataset from the LIBSVM collection.
Mushrooms \citep{lichman2013uci} consists of morphological attributes of mushrooms used to determine their edibility. 
Web Page \citep{platt199912} involves webpage classification and is widely used in SVM-based text classification research. 
MNIST \citep{lecun1998gradient} contains images of handwritten digits (0--9); for binary classification, we use only the digits 2–5. 
Phishing \citep{phishing_websites_327} is designed to detect malicious websites and consists entirely of categorical features, which we encode in binary form. Adult \citep{dua2017uci} contains census data for income prediction and includes multiple categorical variables converted into binary indicators. 
COVTYPE \citep{blackard1998comparison} originally includes multiple forest cover types; in this study, we focus on distinguishing “Spruce/Fir” (labeled as 1) from all other categories (labeled as 0).
These datasets are frequently adopted to benchmark binary classifiers \citep{toulis2017asymptotic,juan2016field,yuan2012improved}. 
While Epsilon, Web Page, MNIST, and Adult already include predefined training and testing sets, we partition COVTYPE, Mushrooms, and Phishing equally to create our own splits. 
Our main objective is to apply a logistic regression model 
to predict the  binary response variable  for each of the seven datasets.
}

 {
We implement the UWASNA, USNA, WASNA and SNA on the training set. 
As a baseline, we also apply first-order algorithms SGD and AdaGrad on it. 
Since these methods are designed to be online (recursive), 
we process the training set in a single pass and do not use any stopping criterion, unlike iterative (offline) methods.
}
We evaluate the performance of each algorithm by calculating the accuracy
(percentage of good classifications) of each one on both training set and test set. 
The results are summarized in Table \ref{tab::real}.
\begin{table}[H]
	\centering
	\small
	\begin{tabular}{llcccccc}
		\hline
		\multirow{2}{*}{Dataset} & \multirow{2}{*}{Set} & \multirow{2}{*}{SNA} & \multirow{2}{*}{WASNA} & \multirow{2}{*}{USNA} & \multirow{2}{*}{UWASNA} & \multirow{2}{*}{AdaGrad} & \multirow{2}{*}{SGD} \\
		& & & & & & & \\
		\hline
		\multirow{2}{*}{Epsilon}   & Train & 89.94 & 90.05 & 84.51 & 79.83 & 87.18 & 71.79 \\
		& Test  & 89.68 & 89.81 & 84.43 & 79.61 & 87.05 & 71.66 \\
		\hline
		\multirow{2}{*}{Mushrooms} & Train & 99.98 & 100.00 & 99.24 & 98.50 & 99.21 & 97.46 \\
		& Test  & 99.83 & 100.00 & 98.87 & 98.84 & 98.82 & 97.02 \\
		\hline
		\multirow{2}{*}{Web Page}  & Train & 98.56 & 98.04 & 96.92 & 97.00 & 97.00 & 96.92 \\
		& Test  & 98.02 & 97.58 & 97.04 & 97.05 & 97.04 & 97.04 \\
		\hline
		\multirow{2}{*}{MNIST}     & Train & 94.28 & 93.08 & 96.36 & 96.53 & 97.21 & 96.99 \\
		& Test  & 94.59 & 92.88 & 96.99 & 96.36 & 97.35 & 97.04 \\
		\hline
		\multirow{2}{*}{Phishing}  & Train & 93.56 & 93.87 & 92.80 & 92.62 & 92.82 & 85.67 \\
		& Test  & 93.38 & 93.89 & 92.58 & 92.42 & 92.84 & 85.60 \\
		\hline
		\multirow{2}{*}{Adult}     & Train & 84.92 & 84.73 & 84.80 & 84.71 & 84.76 & 83.98 \\
		& Test  & 85.03 & 85.00 & 84.94 & 85.17 & 85.09 & 84.10 \\
		\hline
		\multirow{2}{*}{COVTYPE}   & Train & 75.73 & 75.70 & 75.38 & 75.63 & 75.56 & 75.52 \\
		& Test  & 75.86 & 75.78 & 75.50 & 75.69 & 75.67 & 75.65 \\
		\hline
	\end{tabular}
	\caption{Train and Test accuracy (in \%) for SNA, WASNA, USNA, UWASNA, AdaGrad, and SGD on various datasets.}
		\label{tab::real}
\end{table}
 {
We can observe that UWASNA, USNA, WASNA and SNA demonstrate similar performances, and they achieve higher accuracies on both training set and test set compared to SGD. 
By successfully applying USNA and UWASNA on this dataset, 
we illustrate their practicality in real-world applications.
}
 \section*{Conclusion}
 {
In this article, we introduce a novel second-order stochastic algorithm, 
referred to as universal Newton, and designed to estimate the unique minimizer of a convex function. 
Unlike the classical approach, which involves estimating and explicitly inverting the Hessian matrix, our method directly and recursively estimates its inverse using a Robbins-Monro algorithm. 
By incorporating this estimation into the optimization process, 
we develop the universal stochastic Newton algorithm, along with its averaged and weighted variants.
We establish the convergence of these algorithms and their asymptotic efficiency. 
Numerical experiments on simulated data highlight the advantages of our approach over classical methods, 
while tests on real-world datasets confirm its applicability. 
Compared to standard first- and second-order algorithms, 
our method consistently yields comparable results and, in some scenarios, 
even outperforms them.
}

\section*{Acknowledgments} The authors would like to thank Guillaume Sallé for his help in improving the proofs.
 
\section{Proofs}\label{proof}
\subsection{Notations and preliminary definitions}
The purpose of this section is to list the most frequently used notations in the proofs, aiming to facilitate readability.
In the sequel,
$\| \cdot \|$ denotes the Euclidean norm in $\mathbb{R}^{d}$,
while $\| \cdot \|_{op}$ and $\| \cdot \|_{F}$ stand for the spectral norm and Frobenius norm of matrices, respectively.
Throughout this work, we adopt the following notations:
\begin{itemize}
\item[•] $(Z_n)_{n\geq 1}$ is a sequence of independent and identically distributed random vectors of $\mathbb{R}^d$,
independent of the sequence $(X_n)_{n\geq 1}$, and such that for any $n\geq 1$,
$\| Z_{n} \| \leq M$ and $\mathbb{E}\left[ Z_n Z_n^{T} \right] =I_d$;
\item[•] $\hat{\theta}_{n}$ is an estimate of unknown parameter $\theta$;
\item[•] $T_{n+1} := \nabla^2_h g(X_{n+1}, \hat{\theta}_{n})Z_{n+1}Z_{n+1}^T$ ;
\item[•]  $A_n$, the estimate of $H^{-1}$, is recursively defined for all $n \geq 1$ by:
 \begin{align*}
P_{n+1} & = A_{n} Z_n \\
Q_{n+1} & = \nabla_h^2 g(X_{n+1}, \wtheta_{n}) Z_{n+1}\\
A_{n+1} &=  {\Pi_{\beta_{n+1}'}} \left(  A_{n}-\gamma_{n+1} \pa{ P_{n+1} Q_{n+1}^T + Q_{n+1} P_{n+1}^T - 2\,I_d}
\mathbf{1}_{\{\norm{Q_{n+1}} \norm{Z_{n+1}}\le \beta_{n+1}\}} \right) ,
\end{align*}
where $\Pi_{\beta_{n}'}$ is the projection onto the ball of radius $\beta_{n+1}'$ (with respect to the Frobenius norm);
\item[•] $W_{n+1}:=A_{n}T_{n+1}^T + T_{n+1}A_{n}$.
\end{itemize}

Finally, in the proof of Theorems \ref{consisTheta} and \ref{rateTheta},
we will use the following notations

\begin{align*}
\xi_{n+1}  & = -W_{n+1}+\nabla^2G(\hat{\theta}_n)A_n+A_{n}\nabla^2G(\hat{\theta}_n) \\
\Pi_{n+1}^{\perp} & =A_{n+1} - A_{n}+\gamma_{n+1}(W_{n+1}-2I_d)\mathbf{1}_{\norm{Q_{n+1}}\norm{Z_{n+1}}\le \beta_{n+1}} \\
        r_{1,n}&=(\nabla^2G(\hat{\theta}_n)-H)H^{-1}+H^{-1}(\nabla^2G(\hat{\theta}_n)-H),\\
        r_{2,n} &= (W_{n+1}-2I_d)\mathbf{1}_{\norm{Q_{n+1}}\norm{Z_{n+1}}>\beta_{n+1}}-\mathbb{E}\cro{(W_{n+1}-2I_d)\mathbf{1}_{\norm{Q_{n+1}}\norm{Z_{n+1}}>\beta_{n+1}}|\mathcal{F}_n},\\
        s_n &= \mathbb{E}\cro{\pa{T_{n+1}\pa{A_n-H^{-1}}+\pa{A_n-H^{-1}}T_{n+1}}\mathbf{1}_{\norm{Q_{n+1}}\norm{Z_{n+1}}>\beta_{n+1}}|\mathcal{F}_n} \\
        &+\pa{\nabla^2G(\hat{\theta}_n)-H}\pa{A_n-H^{-1}}+\pa{A_n-H^{-1}}\pa{\nabla^2G(\hat{\theta}_n)-H} -\gamma_{n+1}^2H(A_n-H^{-1})H ,\\
        s'_n &=\mathbb{E}\cro{\pa{T_{n+1}H^{-1}+H^{-1}T_{n+1}-2I_d}\mathbf{1}_{\norm{Q_{n+1}}\norm{Z_{n+1}}>\beta_{n+1}}|\mathcal{F}_n} .
\end{align*}


\subsection{Positivity of $A_{n}$}
 {
Let us prove by induction that   $A_n$ is positive semi-definite for any $n\ge0$.
Let us denote by $\tilde{A}_{n+1} := A_n - \gamma_{n+1} \left( A_n T_{n+1}^T + T_{n+1} A_n - 2   I_d \right) \mathbf{1}_{\lVert Q_{n+1} \rVert \leq \beta_n}$.
	If $\lVert Q_{n+1} \rVert > \beta_n$, then $\tilde{A}_{n+1} = A_n$ and is so positive (as well as the projection $A_{n+1}$).
	Conversely, if $\lVert Q_{n+1} \rVert \leq \beta_n$, we have
	\begin{align*}
		\tilde{A}_{n+1} &= A_n - \gamma_{n+1} \left( A_n T_{n+1}^T + T_{n+1} A_n - 2   I_d \right) \\
		&= \left(  I_d - \gamma_{n+1} T_{n+1}\right) A_n \left(  I_d - \gamma_{n+1} T_{n+1}\right)^T
		- \gamma_{n+1}^2 T_{n+1} A_n T_{n+1}^T + 2 \gamma_{n+1}   I_d 
	\end{align*}
Then, since $\gamma_{n+1}^{2}\beta_{n+1}^{2}\beta_{n+1}' \leq \gamma_{n+1}$,
	\begin{align*}
		\lambda_{\min}\left(\tilde{A}_{n+1}\right) &\geq \lambda_{\min}\left( \left(  I_d - \gamma_{n+1} T_{n+1}\right) A_n \left(  I_d - \gamma_{n+1} T_{n+1}\right)^T \right)
		- \gamma_{n+1}^2 \lambda_{\max}\left( T_{n+1} A_n T_{n+1}^T \right) + 2 \gamma_{n+1} \\
		&\geq 0 - \gamma_{n+1}^2 \beta_{n+1}^2 \beta_{n+1}' + 2 \gamma_{n+1} \\
		&\geq \gamma_{n+1} .
	\end{align*}
Then, $\tilde{A}_{n+1}$ is positive (and its projected version $A_{n+1} $ too).
}

\subsection{Study on the largest eigenvalue of $A_n$}
The following proposition provides an initial asymptotic bound of the largest eigenvalue of $A_n$ without requiring  knowledge on the behavior of the estimate $\hat{\theta}_{n}$. This result is crucial to prove  Theorems \ref{consisAn} and  \ref{consisTheta}.  
\begin{prop}\label{grandvp}
	Under Assumptions \textbf{(A3)} and \textbf{(A4)}, the largest eigenvalue of $A_n$ denoted by $\lambda_{\max}(A_n)$ satisfies for all $\delta >0$
	$$\lambda_{\max}(A_n) =  o\pa{n^{1-\gamma}\ln n^{1+\delta}} a.s. \quad  \text{and} \quad \lambda_{\max}(A_{n,\tau}) =  o\pa{n^{1-\gamma}\ln n^{1+\delta}} a.s.$$
\end{prop}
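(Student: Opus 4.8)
The plan is to bound the growth of $\lambda_{\max}(A_n)$ using the recursive definition \eqref{An} together with the projection step, which was precisely designed to control the largest eigenvalue. First I would observe that by the definition of the projection onto the Frobenius ball of radius $\beta_n'$, one has the deterministic bound $\norm{A_n}_F \le \beta_n' = c_\beta' n^{\beta'}$ for all $n$, hence $\lambda_{\max}(A_n) \le \norm{A_n}_F \le c_\beta' n^{\beta'}$. Since the admissible range for $\beta'$ is $(1-\gamma, \gamma - 2\beta)$, this crude bound is of order $n^{\beta'}$ with $\beta' < \gamma - 2\beta < \gamma < 1$, which is already sub-linear but not obviously of the sharper order $n^{1-\gamma}\ln n^{1+\delta}$ claimed. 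So the projection alone gives a weaker bound, and the point of the proposition is to improve it.

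The key step is therefore to analyze the one-step increment more carefully. From \eqref{An}, before projection, the update adds $\gamma_{n+1}(P_{n+1}Q_{n+1}^T + Q_{n+1}P_{n+1}^T - 2I_d)\mathbf{1}_{\{\norm{Q_{n+1}}\norm{Z_{n+1}}\le\beta_{n+1}\}}$ (with a sign), and since projection onto a convex set is $1$-Lipschitz and $I_d$ lies in the ball, one has $\lambda_{\max}(A_{n+1}) \le \lambda_{\max}(A_n) + \gamma_{n+1}\norm{(P_{n+1}Q_{n+1}^T + Q_{n+1}P_{n+1}^T - 2I_d)\mathbf{1}_{\cdots}}_{op}$. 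On the truncation event, $\norm{Q_{n+1}}\norm{Z_{n+1}} \le \beta_{n+1}$, and since $\norm{P_{n+1}} = \norm{A_nZ_{n+1}} \le M\lambda_{\max}(A_n)$, the operator norm of the rank-two increment is controlled by a term of order $\gamma_{n+1}\beta_{n+1}\lambda_{\max}(A_n)$ plus $2\gamma_{n+1}$. I would thus arrive at a recursion of the form
\begin{equation*}
\lambda_{\max}(A_{n+1}) \le (1 + C\gamma_{n+1}\beta_{n+1})\lambda_{\max}(A_n) + 2\gamma_{n+1}.
\end{equation*}
Iterating this multiplicative recursion and using $\gamma_{n+1}\beta_{n+1} = c_\gamma c_\beta (n+1)^{\beta-\gamma}$, I would bound $\prod_{k=1}^n (1 + C\gamma_k\beta_k) \le \exp\big(C\sum_{k}\gamma_k\beta_k\big)$; since $\beta < \gamma - 1/2 < \gamma$, the sum $\sum_k k^{\beta-\gamma}$ behaves like $n^{1-(\gamma-\beta)}$, giving a stretched-exponential factor, while the additive part accumulates to order $n^{1-\gamma}$.

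The main obstacle will be extracting the sharp order $n^{1-\gamma}\ln n^{1+\delta}$ rather than a coarser bound, because the naive iteration of the multiplicative factor is not tight enough on its own. The cleaner route is to pass to the conditional expectation: taking $\Esp[\cdot\mid\mathcal{F}_n]$ of the increment and using $\Esp[Z_{n+1}Z_{n+1}^T] = I_d$ together with Assumption \textbf{(A4)} to bound $\Esp[\norm{Q_{n+1}}\norm{Z_{n+1}}\mid\mathcal{F}_n]$, the drift term becomes $O(\gamma_{n+1}\lambda_{\max}(A_n))$ with the effective contraction governed by the positive-definiteness of $H$ established in the preceding subsection; the stochastic fluctuations, controlled via \textbf{(A4)} and the truncation at level $\beta_{n+1}$, form a martingale whose quadratic variation is summable after normalization. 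I would then invoke a Robbins–Siegmund type almost-sure convergence argument (as the paper does elsewhere) on the normalized quantity $n^{-(1-\gamma)}\lambda_{\max}(A_n)$, together with a standard result that for step size $\gamma_n = c_\gamma n^{-\gamma}$ the solution of the associated stochastic recursion is almost surely $o(n^{1-\gamma}\ln n^{1+\delta})$; the logarithmic factor $\ln n^{1+\delta}$ arises exactly from the a.s. control of the martingale remainder. The identical bound for $\lambda_{\max}(A_{n,\tau})$ then follows immediately, since $A_{n,\tau}$ is a convex combination of the $A_k$ for $k \le n$ (so $\lambda_{\max}(A_{n,\tau}) \le \max_{k\le n}\lambda_{\max}(A_k)$) and the bound on $\lambda_{\max}(A_k)$ is increasing up to the logarithmic correction.
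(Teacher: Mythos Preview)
Your first two passes (the crude projection bound $\lambda_{\max}(A_n)\le\beta_n'$ and the deterministic multiplicative recursion) are correctly identified as too weak, but your ``cleaner route'' has a genuine gap.

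You claim that after conditioning, the drift carries ``effective contraction governed by the positive-definiteness of $H$''. This is not available here. Proposition~\ref{grandvp} must be proved \emph{before} any convergence of $\hat\theta_n$ (in Section~\ref{sec::WASNA} it is precisely what makes the Robbins--Siegmund argument for consistency of $\theta_n$ go through). Thus you cannot replace $\nabla^2G(\hat\theta_n)$ by $H$; the only information is that $\nabla^2G(\hat\theta_n)$ is positive semi-definite by convexity. That positivity kills a term but gives no contraction of the form $(1-c\gamma_{n+1})$ that could absorb the forcing $+2\gamma_{n+1}$ coming from the $-2I_d$ in the update. Moreover, working directly with $\lambda_{\max}(A_n)$ is awkward because $\lambda_{\max}$ is not linear, so $\mathbb{E}[\lambda_{\max}(A_{n+1})\mid\mathcal{F}_n]$ does not inherit a clean recursion.

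The paper's argument instead tracks $V_n=\|A_n\|_F^2$. After conditioning, positivity of $A_n$ and of $\nabla^2G(\hat\theta_n)$ gives $-\langle A_n,\nabla^2G(\hat\theta_n)A_n+A_n\nabla^2G(\hat\theta_n)\rangle_F\le 0$; what remains is essentially $2\gamma_{n+1}\langle A_n,2I_d\rangle_F$, which is split by Young's inequality with a free parameter $\zeta_{n+1}$:
\[
\langle A_n,2I_d\rangle_F\le \frac{1}{\zeta_{n+1}}\|A_n\|_F^2+\zeta_{n+1}d .
\]
The other error and truncation terms are handled by Assumption~\textbf{(A4)}. This yields
\[
\mathbb{E}\big[\|A_{n+1}\|_F^2\,\big|\,\mathcal{F}_n\big]\le\Big(1+\tfrac{4\gamma_{n+1}}{\zeta_{n+1}}+O(\gamma_{n+1}\beta_{n+1}^{1-q})+O(\gamma_{n+1}^2)\Big)\|A_n\|_F^2+4\gamma_{n+1}\zeta_{n+1}d+O(\gamma_{n+1}^2).
\]
Choosing $\zeta_n=n^{1-\gamma}(\ln n)^{1+\delta}$ makes the multiplicative perturbation summable (since $\gamma_{n+1}/\zeta_{n+1}\sim n^{-1}(\ln n)^{-(1+\delta)}$ and $\beta>(1-\gamma)/(q-1)$), and the additive term divided by $a_n:=\zeta_n^2$ is also summable. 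Lemma~\ref{corRS} then gives $\|A_n\|_F^2=o(\zeta_n^2)$, hence $\lambda_{\max}(A_n)\le\|A_n\|_F=o(n^{1-\gamma}(\ln n)^{1+\delta})$. The missing idea in your sketch is exactly this Young-inequality splitting with the tuned sequence $\zeta_n$, applied to $\|A_n\|_F^2$ rather than to $\lambda_{\max}(A_n)$; there is no contraction mechanism doing the work. Your remark that the bound for $A_{n,\tau}$ follows from the convex-combination structure is correct.
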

\begin{proof}[Proof of Proposition \ref{grandvp}]
	Define $W_n:=A_{n-1}T_n^T + T_nA_{n-1}$. By definition of $A_n$,  {and since the projection is $1$-Lipschitz,}
	\begin{align*}
		\norm{A_{n+1}}_F^2 & {\leq} \norm{A_n}_F^2 -2\gamma_{n+1}\braket{A_n,W_{n+1} - 2I_d}_F\mathbf{1}_{\norm{Q_{n+1}}\norm{Z_{n+1}}\le \beta_{n+1}} \\
		&+\gamma_{n+1}^2\norm{W_{n+1} - 2I_d}^2_F\mathbf{1}_{\norm{Q_{n+1}}\norm{Z_{n+1}}\le \beta_{n+1}}.
	\end{align*}
	Considering $\mathcal{F}_n = \sigma \acco{X_1,...,X_n,Z_1,...,Z_n}$, remark that  
	\[
\mathbb{E} \left[ \braket{A_n,W_{n+1} - 2I_d}_F |\mathcal{F}_{n} \right]  = 	\braket{A_n,\nabla^2G(\hat{\theta}_n)A_n + A_n\nabla^2G(\hat{\theta}_n) - 2I_d}_F  .
	\]
Since $\mathbf{1}_{\norm{Q_{n+1}}\norm{Z_{n+1}}\le \beta_{n+1}} = 1- \mathbf{1}_{\norm{Q_{n+1}}\norm{Z_{n+1}}> \beta_{n+1}}$	it comes 
	\begin{align*}
		\mathbb{E}\cro{\norm{A_{n+1}}_F^2|\mathcal{F}_n} &  {\leq}  \norm{A_n}_F^2 - 2	\gamma_{n+1}\braket{A_n,\nabla^2G(\hat{\theta}_n)A_n + A_n\nabla^2G(\hat{\theta}_n) - 2I_d}_F \\ 
		&+ \gamma_{n+1}^2\mathbb{E}\cro{\norm{W_{n+1} -2 I_d}^2\mathbf{1}_{\norm{Q_{n+1}}\norm{Z_{n+1}}\le \beta_{n+1}}|\mathcal{F}_n} \\
		&+2\gamma_{n+1}\mathbb{E}\cro{\braket{A_n,W_{n+1} - 2I_d}_F\mathbf{1}_{\norm{Q_{n+1}}\norm{Z_{n+1}} > \beta_{n+1}}|\mathcal{F}_n} .
	\end{align*}
Since $\|A + B\|_{F}^{2} \leq 2\|A\|_{F}^{2} + 2 \| B\|_{F}^{2}$ (used twice) and $\|AB\|_{F} \leq \| A\|_{F} \| B\|_{F}$, and since $\| Z_{n+1}  \|\leq M$,	Assumption \textbf{(A4)} ensures that 
	\begin{align*}
		\mathbb{E} \cro{\norm{W_{n+1}-2I_d}^2_F\mathbf{1}_{\norm{Q_{n+1}}\norm{Z_{n+1}}\le \beta_{n+1}}|\mathcal{F}_n} & \leq 2 \mathbb{E} \left[ \left\| W_{n+1} \right\|_{F}^{2} |\mathcal{F}_{n} \right] + 2 \left\| 2  I_{d} \right\|_{F}^{2}\\
		&\le 8\pa{\mathbb{E}\cro{\norm{\nabla^2_hg(X_{n+1},\hat{\theta}_n)}_{F}^2  {\norm{Z_{n+1}}^4}\norm{A_n}^2_F |\mathcal{F}_{n}} + d}\\
		&\le 8\pa{C_q^{2/q}M^{4}\norm{A_n}^2_F +  d}.
	\end{align*}
	Since $\norm{Q_{n+1}}$ = $\norm{Q_{n+1}}^q\norm{Q_{n+1}}^{1-q}$ with $q>1$, using Cauchy-Schwarz inequality one has $\langle A_{n} , W_{n+1} \rangle_{F} \leq \| A_{n}\|_{F}\|W_{n+1}\|_{F}$,  and since for any $\zeta_{n+1} > 0$, $\langle A_{n},I_{d}\rangle_{F} \leq \frac{1}{2\zeta_{n+1}} \|A_{n}\|_{F}^{2} + \frac{1}{2}\zeta_{n+1}\| I_{d}\|_{F}^{2}$,
	\begin{align*}
		\gamma_{n+1}&\mathbb{E}\cro{\braket{A_n,W_{n+1} - 2I_d}_F\mathbf{1}_{\norm{Q_{n+1}}\norm{Z_{n+1}} > \beta_{n+1}}|\mathcal{F}_n} \\
		&\le 2\gamma_{n+1}\norm{A_n}^2_F\mathbb{E}\cro{\norm{Q_{n+1}}\norm{Z_{n+1}}\mathbf{1}_{\norm{Q_{n+1}}\norm{Z_{n+1}}\ge \beta_{n+1}}|\mathcal{F}_n}\\
		&+ \frac{\gamma_{n+1}}{\zeta_{n+1}}\norm{A_n}^2_F + \zeta_{n+1}\gamma_{n+1}d \\
		&\le \pa{2C_q M^{2q}\gamma_{n+1}\beta_{n+1}^{1-q}+\frac{\gamma_{n+1}}{\zeta_{n+1}}}\norm{A_n}^2_F+ \gamma_{n+1}\zeta_{n+1}d.
	\end{align*}
    Since $\nabla^2G(\hat{\theta}_n)$ and $A_n$ are positive, one has for any $\zeta_{n+1} > 0$,
	\begin{align*}
		- \gamma_{n+1}\braket{A_n,\nabla^2G(\hat{\theta}_n)A_n + A_n\nabla^2G(\hat{\theta}_n) - 2I_d}_F
		&\le \gamma_{n+1}\braket{A_n,2I_d}_{F} 
		\\&\leq  \frac{\gamma_{n+1}}{\zeta_{n+1}}\norm{A_n}^2_F+ \gamma_{n+1}\zeta_{n+1}d.
	\end{align*}
	Finally,
	\begin{align*}
		\mathbb{E}\cro{\norm{A_{n+1}}_F^2|\mathcal{F}_n} &\le\pa{1+\frac{4\gamma_{n+1}}{\zeta_{n+1}}+4C_q\gamma_{n+1}\beta_{n+1}^{1-q}+8\gamma_{n+1}^2C_q^{2/q}}\norm{A_n}_F^2\\
		&+4\gamma_{n+1}\zeta_{n+1}d+8\gamma_{n+1}^2d.
	\end{align*}
	Setting $\zeta_n=n^{1-\gamma}\ln n^{1+\delta}$ with $\delta >0$, we can apply Lemma \ref{corRS} with $V_n = \norm{A_n}_F^2$, and $a_n = \zeta_{n}^2$. One then obtains $\norm{A_n}_F^2 = o\pa{\zeta_{n}^2} = o\pa{n^{2-2\gamma}\ln n^{2+2\delta}}$, so that $\lambda_{\max}(A_n) =  o\pa{n^{1-\gamma}\ln n^{1+\delta}} a.s.$
\end{proof}
\subsection{Proof of Theorem \ref{consisAn}}
\subsubsection{Rate of convergence of $A_{n}$}
The aim is to provide an initial rate of convergence for $A_n$. A more refined or faster rate will be established later. First, {recalling that $\tilde{A}_{n+1} = A_n  -\gamma_{n+1}(W_{n+1}-2I_d)\mathbf{1}_{\norm{Q_{n+1}}\norm{Z_{n+1}}\le \beta_{n+1}}$, and  denoting $\Pi_{n+1}^{\perp} = A_{n+1} - \tilde{A}_{n+1}$,} observe that
\begin{align*}
	A_{n+1}-H^{-1} &= A_n- H^{-1} -\gamma_{n+1}(W_{n+1}-2I_d)\mathbf{1}_{\norm{Q_{n+1}}\norm{Z_{n+1}}\le \beta_{n+1}}  {+\Pi_{n+1}^{\perp}}\\
	&=A_n-H^{-1}-\gamma_{n+1}(W_{n+1}-2I_d) + \gamma_{n+1}(W_{n+1}-2I_d)\mathbf{1}_{\norm{Q_{n+1}}\norm{Z_{n+1}}>\beta_{n+1}}  {+\Pi_{n+1}^{\perp}}\\
	&=A_n -H^{-1} -\gamma_{n+1}\pa{\mathbb{E}\cro{W_{n+1}|\mathcal{F}_n} -2I_d} + \gamma_{n+1}\xi_{n+1}  {+\Pi_{n+1}^{\perp}}\\
	&\quad+\gamma_{n+1}(W_{n+1}-2I_d)\mathbf{1}_{\norm{Q_{n+1}}\norm{Z_{n+1}}>\beta_{n+1}} \\
	&= A_n -H^{-1} -\gamma_{n+1}\pa{\nabla^2G(\hat{\theta}_n)A_n+A_n\nabla^2G(\hat{\theta}_n) -2I_d} + \gamma_{n+1}\xi_{n+1}  {+\Pi_{n+1}^{\perp}}   \\
	&\quad + \gamma_{n+1}(W_{n+1}-2I_d)\mathbf{1}_{\norm{Q_{n+1}}\norm{Z_{n+1}}>\beta_{n+1}} 
\end{align*}
where $\xi_{n+1} := -W_{n+1}+\nabla^2G(\hat{\theta}_n)A_n+A_{n}\nabla^2G(\hat{\theta}_n)$.
Let $\alpha^*_k$ be the function defined for all $h\in\mathcal{M}_d(\mathbb{R})$ by 
$$\alpha^*_k(h) = \left( I_{d} -  \gamma_{k+1}H \right) h \left( I_{d} - \gamma_{k+1} H \right),$$
we then have
\begin{align}
	  A_{n+1}-H^{-1}  =\alpha_{n}^{*}\pa{A_n-H^{-1}}{+\Pi_{n+1}^{\perp}}  
	&+\gamma_{n+1}r_{1,n}+\gamma_{n+1}r_{2,n}+\gamma_{n+1}s_{n}+\gamma_{n+1}s'_{n}+\gamma_{n+1}\xi_{n+1} \label{decomp}
\end{align} 
where \begin{align*}
	r_{1,n}&=(\nabla^2G(\hat{\theta}_n)-H)H^{-1}+H^{-1}(\nabla^2G(\hat{\theta}_n)-H),\\
	r_{2,n} &= (W_{n+1}-2I_d)\mathbf{1}_{\norm{Q_{n+1}}\norm{Z_{n+1}}>\beta_{n+1}}-\mathbb{E}\cro{(W_{n+1}-2I_d)\mathbf{1}_{\norm{Q_{n+1}}\norm{Z_{n+1}}>\beta_{n+1}}|\mathcal{F}_n},\\
	s_n &= \mathbb{E}\cro{\pa{T_{n+1}\pa{A_n-H^{-1}}+\pa{A_n-H^{-1}}T_{n+1}}\mathbf{1}_{\norm{Q_{n+1}}\norm{Z_{n+1}}>\beta_{n+1}}|\mathcal{F}_n} \\
	&+\pa{\nabla^2G(\hat{\theta}_n)-H}\pa{A_n-H^{-1}}+\pa{A_n-H^{-1}}\pa{\nabla^2G(\hat{\theta}_n)-H} -\gamma_{n+1}^2H(A_n-H^{-1})H ,\\
	s'_n &=\mathbb{E}\cro{\pa{T_{n+1}H^{-1}+H^{-1}T_{n+1}-2I_d}\mathbf{1}_{\norm{Q_{n+1}}\norm{Z_{n+1}}>\beta_{n+1}}|\mathcal{F}_n} 
\end{align*}
By induction,
\begin{align}\label{decdegueulasse}
\notag	A_{n}-H^{-1}&=\Psi^*_{n,0}\left(A_0-H^{-1} \right)+\sum_{k=0}^{n-1}\Psi^*_{n,k+1}\gamma_{k+1}\xi_{k+1}+\sum_{k=0}^{n-1}\Psi^*_{n,k+1}\gamma_{k+1}r_{1,k}  \\
\notag 	&\quad+\sum_{k=0}^{n-1}\Psi^*_{n,k+1}\gamma_{k+1}r_{2,k}+ \sum_{k=0}^{n-1}\Psi^*_{n,k+1}\gamma_{k+1}s_{k} + \sum_{k=0}^{n-1}\Psi^*_{n,k+1}\gamma_{k+1}s'_{k} \\
& \quad {+ \sum_{k=0}^{n-1} \Psi_{n,k+1}^{*} \gamma_{k+1}\Pi_{k+1}^{\perp}}
\end{align}
where $\Psi^*_{n,k}$ is the function defined for all $h\in\mathcal{M}_p(\mathbb{R})$ by :
$$ \Psi^*_{n,k}(h) = \pa{\prod_{j=k+1}^n\alpha^*_j}(h) = \pa{\prod_{j=k+1}^n\left( I_{d} -  \gamma_{j}H \right)}h\pa{\prod_{j=k+1}^n\left( I_{d} -  \gamma_{j}H \right)}.$$
Next, we will determine the rate of convergence for each term on the right-hand side of equation \eqref{decdegueulasse}.

\medskip

\noindent\textbf{Rate of convergence of $M_{n}:=\sum_{k=0}^{n-1}\Psi^*_{n,k+1}\gamma_{k+1}\xi_{k+1}$: }
Recall that for all $\delta>0$, $\lambda_{\max}(A_n) =  o\pa{n^{1-\gamma}\ln n^{1+\delta}} a.s.$ 
Thus, there exists a constant $c'>0$ such that $\mathbb{E}\cro{\norm{\xi_{n+1}}^2_F | \mathcal{F}_n} \le c'\pa{1+\norm{A_n}_F^2}= o\pa{n^{2-2\gamma}\ln n^{2+2\delta}}$ a.s., and according to Lemma \ref{lemmartbeta}, it follows that
\begin{equation}
\label{maj::Mn}
\norm{\sum_{k=0}^{n-1}\Psi^*_{n,k+1}\gamma_{k+1}\xi_{k+1}}^2_F = \mathcal{O}\pa{n^{2-3\gamma}\ln n^{2+2\delta}} \quad a.s.
\end{equation}

\noindent\textbf{Rate of convergence of  $R_{1,n}:=\sum_{k=0}^{n-1}\Psi^*_{n,k+1}\gamma_{k+1}r_{1,k}$: } we have
$$R_{1,n+1} = \pa{I_d-\gamma_{n+1}H}R_{1,n}\pa{I_d-\gamma_{n+1}H}+\gamma_{n+1}r_{1,n+1}.$$
Therefore, for $n$ large enough
\begin{align*}
	\norm{R_{1,n+1}}_{op} &\le \norm{I_d-\gamma_{n+1}H}^2_{op}\norm{R_{1,n}}_{op}+\gamma_{n+1}\norm{r_{1,n+1}}_{op}\\
	&\le \pa{1-\lambda_{\min}(H)\gamma_{n+1}}^2\norm{R_{1,n}}_{op} + \gamma_{n+1}\norm{r_{1,n+1}}_{op} 
\end{align*}
By Assumption \textbf{(A3)}, 
\begin{equation}\label{majr1npetit}
\norm{r_{1,n+1}}_{op} =  o\pa{n^{-a/2}\ln n^{(1+\delta)/2}} \quad  a.s.
\end{equation}
According to Lemma \ref{lemcours}, 
\begin{equation}
\label{maj::Rn}
\norm{R_{1,n+1}}_{op} =  o\pa{n^{-a/2}\ln n^{(1+\delta)/2}} \quad  a.s.
\end{equation}

\medskip

\noindent\textbf{Rate of convergence of $R_{2,n} := \sum_{k=0}^{n-1}\Psi^*_{n,k+1}\gamma_{k+1}r_{2,k}$: } Observe that $\left( r_{2,n} \right)$ is a sequence of martingale differences adapted to the filtration $\mathcal{F}_{n}$. Moreover, since $\|A + B\|_{F}^{2} \leq 2\|A \|_{F}^{2} + 2 \| B \|_{F}^{2}$ (used twice),
\begin{align*}
	\mathbb{E}&\cro{\norm{(W_{n+1}-2I_d)\mathbf{1}_{\norm{Q_{n+1}}\norm{Z_{n+1}}\ge\beta_{n+1}}}_{op}^2|\mathcal{F}_n} \\
	\le &8\mathbb{E}\cro{\pa{\norm{Q_{n+1}}^2\norm{Z_{n+1}}^2\norm{A_n}_{F}^2+ d}\mathbf{1}_{\norm{Q_{n+1}}\norm{Z_{n+1}}\ge\beta_{n+1}}|\mathcal{F}_n}.
\end{align*}
Applying Markov's inequality, and since $\left\| Z_{n+1} \right\| \leq M$, 
\[
\mathbb{E} \left[ \mathbf{1}_{\norm{Q_{n+1}}\norm{Z_{n+1}}\ge\beta_{n+1}}|\mathcal{F}_n \right] \leq \beta_{n+1}^{-q} \mathbb{E} \left[ \left\| \nabla_{h}^{2}g\left( X_{n+1} , \hat{\theta}_{n} \right) \right\|_{op}^{q} \left\| Z_{n+1} \right\|^{2q} |\mathcal{F}_{n} \right] \leq C_{q}M^{2q}\beta_{n+1}^{-q} .
\]
In a same way, 
\begin{align*}
\mathbb{E} \left[ \norm{Q_{n+1}}^2\norm{Z_{n+1}}^2\norm{A_n}^2 \mathbf{1}_{\norm{Q_{n+1}}\norm{Z_{n+1}}\ge\beta_{n+1}}|\mathcal{F}_n \right] &  \leq \beta_{n+1}^{2-q}\norm{A_n}_{F}^2\mathbb{E} \left[ \norm{Q_{n+1}}^q\norm{Z_{n+1}}^q |\mathcal{F}_{n} \right]  \\
& \leq C_{q}\left\| A_{n} \right\|_{F}^{2}M^{2q} \beta_{n+1}^{2-q} .
\end{align*}
Then,
\begin{align*}
	\mathbb{E}\cro{\norm{(W_{n+1}-2I_d)\mathbf{1}_{\norm{Q_{n+1}}\norm{Z_{n+1}}\ge\beta_{n+1}}}^2|\mathcal{F}_n} &\le 8  C_{q}M^{2q}\beta_{n}^{-q}    + 8C_{q}\left\| A_{n} \right\|_{F}^{2}M^{2q}
\end{align*}
Thus, with  the help of Lemma \ref{lemmartbeta},
\begin{equation}\label{vit::r2n}
\norm{R_{2,n}}^2_{op}  = \mathcal{O}\pa{n^{2-3\gamma}\ln n^{2+2\delta}n^{2\beta-q\beta}} \quad   a.s.
\end{equation}
Note that the rate of convergence for $R_{2,n}$ is faster than that of $M_{n}$.\\
 
\medskip

\noindent\textbf{Rate of convergence of $S
_{n}':= \sum_{k=0}^{n-1}\Psi^*_{n,k+1}\gamma_{k+1}s'_{k} $: }
We have similarly
\begin{align}
\notag	 \left\| s'_n \right\| & \leq  \mathbb{E}\cro{ \left\| \pa{T_{n+1}H^{-1}+H^{-1}T_{n+1}-2I_d} \right\|_{F}\mathbf{1}_{\norm{Q_{n+1}}\norm{Z_{n+1}}\ge\beta_{n+1}}|\mathcal{F}_n} \\
\notag	 & \leq  2 \left\| H^{-1} \right\|_{F} \mathbb{E}\left[ \left\| Z_{n+1} \right\|^{2} \left\| \nabla_{h}^{2}g \left( X_{n+1} , \hat{\theta}_{n} \right) \right\|_{F} \mathbf{1}_{\norm{Q_{n+1}}\norm{Z_{n+1}}\ge\beta_{n+1}}|\mathcal{F}_n \right]  \\
\notag	 \quad &+ 2q \mathbb{E}\left[ \mathbf{1}_{\norm{Q_{n+1}}\norm{Z_{n+1}}\ge\beta_{n+1}}|\mathcal{F}_n \right]   \\
\label{majsnprimepetit}	 \quad &   = \mathcal{O}\pa{\beta_{n+1}^{1-q}}.
\end{align}
Therefore, thanks to Lemma \ref{lemcours}, 
\begin{equation}
\label{maj::Snprime} S'_n    = \mathcal{O}\pa{\beta_{n}^{1-q}}.
\end{equation}
\medskip

 {\noindent\textbf{Rate of convergence of $R_{\Pi,n} = \sum_{k=0}^{n-1}\Psi_{n,k+1}^{*}\gamma_{k+1}\Pi_{k+1}^{\perp}$: } Observe that $\Pi_{n+1}^{\perp}$ is different from $0$ if 
\[
\left\| A_{n} - \gamma_{n+1}(W_{n+1}-2I_d)\mathbf{1}_{\norm{Q_{n+1}}\norm{Z_{n+1}}\le \beta_{n+1}} \right\|_{F} \leq \beta_{n+1}'.
\]
Nevertheless, one has
\begin{align*}
\left\| A_{n} -  \gamma_{n+1}(W_{n+1}-2I_d)\mathbf{1}_{\norm{Q_{n+1}}\norm{Z_{n+1}}\le \beta_{n+1}} \right\|_{F} & \leq \left\| A_{n} \right\|_{F} \left( 1+ 2\gamma_{n+1}\beta_{n+1} \right) + 2 d \gamma_{n+1} \\
&  = o \left( \ln (n+1)^{1+\delta}n^{1-\gamma} \right) \quad a.s.
\end{align*}
and since $\beta ' > 1- \gamma$, it comes that $\mathbf{1}_{\Pi_{n+1}^{\perp} \neq 0}$ converges almost surely to $0$. Then
\begin{align*}
\|R_{\Pi,n} \|_{op} \leq \left\| \Psi_{n,0}^{*} \right\|_{op}  \sum_{k=0}^{n-1}\left\| \Psi_{k,0}^{-1} \gamma_{k+1}\Pi_{k+1}^{\perp}\right\|_{op}\mathbf{1}_{\Pi_{k+1}^{\perp} \neq 0}  = O \left(  \left\| \Psi_{n,0}^{*} \right\|_{op}  \right) \quad a.s.
\end{align*}
and this term converges exponentially fast to $0$.
}

\noindent\textbf{A first result for $A_{n}$: }
Thanks to equalities \eqref{maj::Mn}, \eqref{maj::Rn}, \eqref{vit::r2n} and \eqref{maj::Snprime}, one can rewrite $A_{n} - H^{-1}$ as 
$$
A_n-H^{-1} = \sum_{k=0}^{n-1}\Psi^*_{n,k+1}\gamma_{k+1}s_k + \widetilde{S}_n + \Psi_{n,0}^{*} \left( A_{0} - H^{-1} \right) 
$$
with $\left\| \widetilde{S}_n \right\|_{F} =o\pa{n^{\max\pa{1-\frac{3}{2}\gamma,-a/2,\beta(1-q)}}\ln n^{1+\delta}}$ a.s. and $\Psi_{n,0}^{*} \left( A_{0} - H^{-1} \right)  $ converges exponentially fast to $0$.  In addition, since $\hat{\theta}_{n}$ converges almost surely to $\theta$ and since \\
$\mathbb{E}\cro{\norm{Q_{n+1}}\norm{Z_{n+1}}\mathbf{1}_{\norm{Q_{n+1}}\norm{Z_{n+1}}\ge\beta_{n}}\|\mathcal{F}_n}$ converges almost surely to $0$ (one can perform analogous calculations to those used in deriving equality \eqref{maj::Rn}), 
\begin{align*}
	\norm{s_n}_{op}& \le \norm{A_n-H^{-1}}_{op}\mathbb{E}\cro{\norm{Q_{n+1}}\norm{Z_{n+1}}\mathbf{1}_{\norm{Q_{n+1}}\norm{Z_{n+1}}\ge\beta_{n}}\|\mathcal{F}_n}\\
	&\quad+\norm{\nabla^2G(\hat{\theta}_n)-H}\norm{A_n-H^{-1}}_{op} + \gamma_{n+1}^{2} \| H\|_{op}^{2} \left\| A_{n} - H^{-1} \right\|_{op}\\
	& =o \left(\norm{A_n-H^{-1}}_{op} \right)  \quad a.s.
\end{align*}
Define $S_{n} := \sum_{k=0}^{n-1}\Psi^*_{n,k+1}\gamma_{k+1}s_k$. There exists a positive sequence $(\widetilde{r}_n)_{n\ge0}$ with $\widetilde{r}_n \xrightarrow[n\rightarrow\infty]{a.s.}0$ such that
\begin{align*}
	\norm{S_{n+1}}_{op}&\le(1-\lambda_{\min}(H)\gamma_{n+1})^2\norm{S_n}_{op} + \gamma_{n+1}\widetilde{r}_n\norm{A_n-H^{-1}}_{op}\\
	& \leq (1-\lambda_{\min}(H)\gamma_{n+1})^2\norm{S_n}_{op} + \gamma_{n+1}\widetilde{r}_n\pa{\norm{S_n}_{op}+\norm{\widetilde{S}_n}_{op}}.
\end{align*}
Applying Lemma \ref{lemcours}, 
\[
\norm{S_n}_{op} = o\pa{n^{\max\pa{1-\frac{3}{2}\gamma,-a/2,\beta(1-q)}}\ln n^{1+\delta}} ,
\]
 which implies that 
\begin{equation}\label{firstresult}
\norm{A_n-H^{-1}}_{F} = o\pa{n^{\max\pa{1-\frac{3}{2}\gamma,-a/2,\beta(1-q)}}\ln n^{1+\delta}} \quad a.s.
\end{equation}

\medskip

\noindent \textbf{Final rate of convergence of $A_{n}$: }
The aim here is, with the help of this first result on $A_{n}$ given by \eqref{firstresult}, to obtain better rates of convergence for $M_{n}$ and $R_{2,n}$. First, note  that if $\gamma > 2/3$, then $1-\frac{3}{2}\gamma <0$ and we have directly $A_n \xrightarrow[n\rightarrow\infty]{a.s.} H^{-1}$.
If $\gamma \le 2/3$, we have $1-\frac{3}{2}\gamma >0$ , so that $\max\{1-\frac{3}{2}\gamma,-a/2,\beta(1-q)\}= \max\{1-\frac{3}{2}\gamma,\beta(1-q)\}$ and $\norm{A_n} = o\pa{n^{\max\{1-\frac{3}{2}\gamma,\beta(1-q)\}}\ln n^{1+\delta}} a.s.$. 
Thus, when $\gamma \leq 2/3$, applying Lemma \ref{lemmartbeta} and following the same calculus as for obtaining \eqref{maj::Mn} and \eqref{vit::r2n},  one now has 
\[
\norm{M_{n}}_{F}^2 = o\pa{n^{\max\{2-4\gamma,\beta(2-2q)-\gamma\}}\ln n^{2+2\delta}} \quad a.s.
\]   
and
\[
\norm{R_{2,n}}^2_{op} = \mathcal{O}\pa{n^{2-4\gamma}\ln n^{2+2\delta}n^{2\beta-q\beta}} \quad a.s.
\]
Following the same process as before, we now have that for any $\gamma \in (1/2,1)$, $A_{n}$ converges almost surely to $H^{-1}$, and in particular, one has $\norm{A_n}$ = $\mathcal{O}\pa{1}$ a.s.  Applying another time Lemma \ref{lemmartbeta} and following the same calculus as for obtaining \eqref{maj::Mn} and \eqref{vit::r2n}, we have 
\[\norm{M_{n}}_{F}^2 = \mathcal{O}\pa{n^{-\gamma}\ln n^{1+\delta}} \quad a.s.
\]
and
\[
\norm{R_{2,n}}^2_{op} = \mathcal{O}\pa{n^{-\gamma}\ln n^{2+2\delta}n^{2\beta-q\beta}} \quad a.s.\]
Finally, we have 
\begin{equation}\label{vitesseAn}
	\norm{A_n - H^{-1}}_{F}^2 = o\pa{\frac{\ln n^{1+\delta}}{n^{\min\{\gamma,a, 2 \beta (q-1)\}}}} a.s.
\end{equation}

\medskip

\subsubsection{Rate of convergence of $A_{n,\tau}$.} First, note that decomposition \eqref{decomp} can be written as 
\begin{align*}
	H(A_n-H^{-1})+(A_n-H^{-1})H = \frac{A_n-H^{-1}-(A_{n+1}-H^{-1})}{\gamma_{n+1}}+r_{1,n}+r_{2,n}+s_{n}+s'_{n}+\xi_{n+1}  {+ \frac{\Pi_{k+1}^{\perp}}{\gamma_{k+1}}},
\end{align*}
so that
\begin{align}\label{decmoy}
\notag	H(A_{n,\tau}-H^{-1})+(A_{n,\tau}-H^{-1})H = &t_n\sum_{k=0}^nu_{k+1}\pa{(A_k-H^{-1})-(A_{k+1}-H^{-1})} \\
	&+t_n\sum_{k=0}^nu_{k+1}\gamma_{k+1}\left(r_{1,k}+r_{2,k}+s_{k}+s'_{n}+\xi_{k+1}   {+ \frac{\Pi_{k+1}^{\perp}}{\gamma_{k+1}}}\right)
\end{align}
where $t_n = \frac{1}{\sum^n_{k=0}\log(k+1)^\tau}$ and $u_{k+1} = \frac{\ln(k+1)^\tau}{\gamma_{k+1}}$. Our objective is to determine the rate of convergence for each term on the right-hand side of  decomposition   \eqref{decmoy}.

\medskip

\noindent \textbf{Rate of convergence of $t_n\sum_{k=0}^nu_{k+1}\pa{(A_k-H^{-1})-(A_{k+1}-H^{-1})}$: }
with the help of an Abel's transform,
\begin{align*}
t_n\sum_{k=0}^nu_{k+1}\pa{(A_k-H^{-1})-(A_{k+1}-H^{-1})}&  =   -t_n\left(A_{n+1}-H^{-1}\right) u_{n+1}+t_n\left( A_0-H^{-1}\right)u_1 \\
&  +t_n\sum_{k=1}^n(A_k-H^{-1})(u_{k+1}-u_k) .
\end{align*}
It is obvious that $t_{n} \left( A_{0} - H^{-1}\right)u_{1} $ is negligible while thanks to equality \eqref{vitesseAn},
\[
t_{n}\left\|  A_{n+1} - H^{-1}\right\| u_{n+1} = o \left( \frac{\ln n^{(1+\delta)/2}}{n^{1 - \gamma + \frac{1}{2}\min \left\lbrace \gamma ,a ,2\beta (q-1) \right\rbrace }} \right) \quad a.s 
\]
In addition, by the mean value theorem, $|u_{k+1}-u_{k}| \le 2k^{\gamma-1}\max\{1,\tau\}\ln(k+1)^\tau$,
which implies that
\[
\left\| \sum_{k=1}^n(A_k-H^{-1})(u_{k+1}-u_k) \right\|_{F} \leq \sum_{k=1}^n \left\| A_k-H^{-1}\right\|_{F}2k^{\gamma-1}\max\{1,\tau\}\ln(k+1)^\tau
\]
and with the help of equation \eqref{vitesseAn}
\begin{equation}\label{abel}
\norm{t_n\sum_{k=0}^nu_k\pa{(A_k-H^{-1})-(A_{k+1}-H^{-1})}}_F = o \left( \frac{\ln n^{1+\delta}}{n^{1 - \gamma + \frac{1}{2}\min \left\lbrace \gamma ,a ,2\beta (q-1) \right\rbrace }} \right) \quad a.s 
\end{equation}

\medskip

\noindent\textbf{Rate of convergence of $t_n\sum_{k=0}^nu_{k+1}\gamma_{k+1}\xi_{k+1}$. }
Remark that there exists $c'>0$ such that  
$$\mathbb{E}\cro{\norm{\xi_{n+1}}^2_F \| \mathcal{F}_n} \le c'\pa{1+\norm{A_n}_F^2} \le c'\pa{1+2\norm{A_n-H^{-1}}_F^2+2\norm{H^{-1}}_F^2} = \mathcal{O}(1) \quad a.s.$$
By applying a law of large numbers for martingales, one obtains for all $\delta >0$
$$\norm{t_n\sum_{k=0}^nu_{k+1}\gamma_{k+1}\xi_{k+1}}_F^2 = o\pa{\frac{\ln n^{1+\delta}}{n}} a.s.$$

\noindent \textbf{Rate of convergence of $t_n\sum_{k=0}^nu_{k+1}\gamma_{k+1}r_{1,k}$.}
First, let us give a useful equality:
consider $\sum_{k=0}^nk^{-\tilde{a}/2}\ln k^{(1+\delta)/2}$ with $\tilde{a}>0$, since 
$\int_{1}^{n} x^{-\tilde{a}/2}\ln x^{(1+\delta)/2} dx \le c(\ln n)^{(1+\delta)/2}x^{1-\tilde{a}}$ if $\tilde{a}\neq2$ and $\int_{1}^{n} x^{-\tilde{a}/2}\ln x^{(1+\delta)/2} dx \le c'(\ln n)^{(3+\delta)/2}$ if $\tilde{a}=2$
then one can verify that 
\begin{equation}\label{equationatilde}
\norm{t_n\sum_{k=0}^nu_k\gamma_{k+1}k^{-\tilde{a}/2}\ln k^{(1+\delta)/2}}^2_F = o\pa{\frac{\ln n^{1+\delta + 2\mathbf{1}_{\tilde{a} = 2}}}{n^{\min\{2,\tilde{a}\}}}} a.s.
\end{equation}
Then, recalling that $\norm{r_{1,k+1}}_{op} =  o\pa{n^{-a/2}\ln k^{(1+\delta)/2}}$ a.s. (see equality \eqref{majr1npetit}),
\[
\left\| t_{n}\sum_{k=0}^{n} u_{k}\gamma_{k+1}r_{1,k} \right\|_{F}^{2}=o\pa{\frac{\ln n^{1+\delta + 2\mathbf{1}_{ {a} = 2}}}{n^{\min\{2, {a}\}}}} a.s.
\]

\medskip

\noindent \textbf{Rate of convergence of $t_n\sum_{k=0}^nu_{k+1}\gamma_{k+1}s_{k}'$.} Recalling that $s_{k}' = O \left( \beta_{k}^{1-q} \right)$ (see equality \eqref{majsnprimepetit}), it comes
\[
\left\| t_{n}\sum_{k=0}^{n} u_{k}\gamma_{k+1}s_{k}' \right\|_{F}^{2} = o\pa{\frac{\ln n^{ 2\mathbf{1}_{2\beta(q-1) = 2}}}{n^{\min\{2,2\beta (q-1)\}}}} \quad a.s.
\]

\noindent \textbf{Rate of convergence of $t_n\sum_{k=0}^nu_{k+1}\gamma_{k+1}s_{k}$.} Recall that
\begin{align*}
	\norm{s_n}_{op}& \le \norm{A_n-H^{-1}}_{op}\mathbb{E}\cro{\norm{Q_{n+1}}\norm{Z_{n+1}}\mathbf{1}_{\norm{Q_{n+1}}\norm{Z_{n+1}}\ge\beta_{n}} |\mathcal{F}_n}\\
	&\quad +\norm{\nabla^2G(\hat{\theta}_n)-H}\norm{A_n-H^{-1}}_{op} + \gamma_{n+1} \| H\|_{op} \left\| A_{n} - H^{-1} \right\|_{op} ,
\end{align*}
Recalling that $\mathbb{E}\left[ \left\| T_{n+1} \right\|_{op} \mathbf{1}_{\left\| T_{n+1} \right\| \geq \beta_{n} }  |\mathcal{F}_{n} \right] = O \left( \beta_{n}^{1-q} \right)$, and with the help of equality \eqref{vitesseAn}
\begin{align*}
&\norm{A_n-H^{-1}}_{op}\mathbb{E}\cro{\norm{Q_{n+1}}\norm{Z_{n+1}}\mathbf{1}_{\norm{Q_{n+1}}\norm{Z_{n+1}}\ge\beta_{n}} |\mathcal{F}_n} \\
&\quad= o \left( \frac{\ln n^{(1+\delta)/2}}{n^{\beta (q-1) +\frac{1}{2} \min\left\lbrace a, \gamma , 2\beta (q-1)\right\rbrace}} \right) \quad a.s 
\end{align*}
In addition, one has
\[
\norm{\nabla^2G(\hat{\theta}_n)-H}\norm{A_n-H^{-1}}_{op} = o \left( \frac{\ln n^{1+\delta}}{n^{a/2 + \frac{1}{2} \min\left\lbrace a, \gamma , 2\beta (q-1)\right\rbrace}} \right) \quad a.s.
\]
and the term $\gamma_{n+1} \| H\|_{op} \left\| A_{n} - H^{-1} \right\|_{op}$ is negligible,
leading to
\[
\left\| s_{n} \right\| = o \left( \frac{\ln n^{1+\delta}}{n^{\min \left\lbrace a/2 , \beta (q-1) \right\rbrace + \frac{1}{2} \min\left\lbrace a, \gamma , 2\beta (q-1)\right\rbrace}} \right) \quad a.s.
\] 
Then, since $ \min \left\lbrace a/2 , \beta (q-1) \right\rbrace + \frac{1}{2} \min\left\lbrace a, \gamma , 2\beta (q-1)\right\rbrace \geq  \min\left\lbrace a, \gamma , 2\beta (q-1)\right\rbrace$,
\[
\left\| t_{n}\sum_{k=0}^{n} u_{k}\gamma_{k+1}s_{k} \right\|_{F}^{2} = o\pa{\frac{\ln n^{1+\delta}}{n^{ 2\min \left\lbrace a,\gamma , 2\beta (q-1) \right\rbrace }}} a.s.
\]

\medskip

\noindent \textbf{Rate of convergence of $t_n\sum_{k=0}^nu_{k+1}\gamma_{k+1}r_{2,k}$.} 
Let us recall that $r_{2,n}$ is a martingale difference, and with the help of a law of large numbers for martingales it comes that
\[
\left\| t_n\sum_{k=0}^nu_{k+1}\gamma_{k+1}r_{2,k} \right\|_{F}^{2} = o \left( \frac{\ln n^{1+\delta}}{n} \right) \quad a.s.
\]

\noindent {\textbf{Rate of convergence of $t_{n}\sum_{k=0}^{n}\frac{\Pi_{k+1}^{\perp}}{\gamma_{k+1}}$: } Since $\mathbf{1}_{\Pi_{k+1}^{\perp} \neq 0}$ converges almost surely to $0$, 
\[
\left\| t_{n}\sum_{k=0}^{n}\frac{\Pi_{k+1}^{\perp}}{\gamma_{k+1}} \right\| = O (t_{n}) \quad a.s.
\]
and this term is so negligible.
 }

\noindent\textbf{Rate of convergence of $A_{n,\tau}$.} We so have

$$\norm{H(A_{n,\tau}-H^{-1})+(A_{n,\tau}-H^{-1})H}^2 = o\pa{\frac{\ln n^{1+\delta}}{n^{\min\{1,a, 2\beta (q-1)\}}}} a.s.$$
Note that $H$ is diagonalisable in an orthonormal basis, and its eigenvalues are denoted by $\lambda_1,...,\lambda_d$. Thus, for any matrix $B \in \mathcal{M}_d$, we have
\begin{align*}
	\norm{HB+BH}_F^2 &= \sum_{i=1}^n\sum_{j=1}^n\pa{HB+BH}^2_{i,j} \\
	& =\sum_{i=1}^n\sum_{j=1}^n\pa{\lambda_i+\lambda_j}^2B^2_{i,j}\\
	& \ge 4\lambda_{\min}(H)^2\sum_{i=1}^n\sum_{j=1}^nB^2_{i,j} =  4\lambda_{\min}(H)^2\norm{B}^2_F.
\end{align*}
\begin{align*}
\norm{A_{n,\tau} - H^{-1}}^2 &\leq \frac{1}{4\lambda_{\min}(H)^{2}} \norm{H(A_{n,\tau}-H^{-1})+(A_{n,\tau}-H^{-1})H}^2 
\\ &= o\pa{\frac{\ln n^{1+\delta}}{n^{\min\{1,a, 2\beta (q-1)\}}}} a.s.
\end{align*}
\subsection{Proof of Theorem \ref{consisTheta}}
In order to prove this theorem, we show that the eigenvalues of $A_n$ and $A_{n,\tau}$ are well controlled, which implies the consistency of $\theta_{n}$. Then, we give an initial rate of convergence of $\theta_{n}$, which allows us to obtain the rate of convergence of $A_n$ and $A_{n,\tau}$. The last step is to deduce the rate of convergence of  $\theta_{n}$ and  $\theta_{n,\tau}$. 

\begin{prop}\label{petitvp}
	Under Assumptions\textbf{(A3)} and \textbf{(A5)}, the smallest eigenvalue of $A_n$ (resp. $A_{n,\tau}$) denoted by $\lambda_{\min}(A_n)$ (resp. $\lambda_{\min}(A_n,\tau)$) satisfies
	$$\lambda_{\min}(A_n)^{-1} =  \mathcal{O}\pa{n^\beta} a.s. \quad \quad \text{ and } \quad \quad  \lambda_{\min}(A_{n,\tau})^{-1} =  \mathcal{O}\pa{n^\beta} a.s.$$
\end{prop}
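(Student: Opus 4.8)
The plan is to prove the bound for $A_n$ first and then deduce the one for $A_{n,\tau}$ from it by a convexity argument. For $A_n$ the goal is to show that $\lambda_{\min}(A_n) \ge c\,n^{-\beta}$ eventually almost surely for some $c>0$; the point is that this must \emph{improve} on the crude floor $\lambda_{\min}(A_n)\ge \gamma_{n+1}\sim n^{-\gamma}$ already obtained in the positivity step, since $\beta<\gamma$. I would start from the identity used there: on $\{\norm{Q_{n+1}}\norm{Z_{n+1}}\le\beta_{n+1}\}$, and for $n$ large enough that the projection $\Pi_{\beta_{n+1}'}$ is inactive (which holds almost surely because $\norm{A_n}_F=o(n^{1-\gamma}\ln n^{1+\delta})$ by Proposition \ref{grandvp} while $\beta_n'=c_\beta'n^{\beta'}$ with $\beta'>1-\gamma$, exactly as in the $R_{\Pi,n}$ step of Theorem \ref{consisAn}), the symmetric matrix $A_{n+1}$ equals
\[
(I_d-\gamma_{n+1}T_{n+1})A_n(I_d-\gamma_{n+1}T_{n+1})^T - \gamma_{n+1}^2 T_{n+1}A_nT_{n+1}^T + 2\gamma_{n+1}I_d .
\]

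The refinement over the positivity bound is to \emph{keep} the first (positive semidefinite) term instead of discarding it, and to control the subtracted term with the sharp bound on $\norm{A_n}_{op}$ from Proposition \ref{grandvp} rather than with $\beta_{n+1}'$. On the truncation event $\norm{T_{n+1}}_{op}=\norm{Q_{n+1}}\norm{Z_{n+1}}\le\beta_{n+1}$, so $\sigma_{\min}(I_d-\gamma_{n+1}T_{n+1})\ge 1-\gamma_{n+1}\beta_{n+1}$ and the first term has smallest eigenvalue at least $\lambda_{\min}(A_n)(1-\gamma_{n+1}\beta_{n+1})^2$, while $\norm{\gamma_{n+1}^2 T_{n+1}A_nT_{n+1}^T}_{op}\le \gamma_{n+1}^2\beta_{n+1}^2\norm{A_n}_{op}$. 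Because $\beta<\gamma-\tfrac12$ together with $\norm{A_n}_{op}=o(n^{1-\gamma}\ln n^{1+\delta})$ forces $\gamma_{n+1}^2\beta_{n+1}^2\norm{A_n}_{op}=o(\gamma_{n+1})$, Weyl's inequality gives, on active steps,
\[
\lambda_{\min}(A_{n+1}) \ge \lambda_{\min}(A_n)(1-\gamma_{n+1}\beta_{n+1})^2 + 2\gamma_{n+1} - o(\gamma_{n+1}),
\]
whereas on the complementary event $A_{n+1}=A_n$ and $\lambda_{\min}$ is unchanged.

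It then remains to analyse this recursion. The map $m\mapsto m(1-\gamma_{n+1}\beta_{n+1})^2+2\gamma_{n+1}$ is an increasing contraction with fixed point $m_n^{*}=\frac{2}{\beta_{n+1}(2-\gamma_{n+1}\beta_{n+1})}$, which lies between $\beta_{n+1}^{-1}$ and $\tfrac43\beta_{n+1}^{-1}$ thanks to $\gamma_{n+1}\beta_{n+1}\le\tfrac12$, hence is of order $n^{-\beta}$. On active steps $\lambda_{\min}(A_n)$ is driven toward $m_n^{*}$ (pushed up whenever it is below it), and on inactive steps it stays put while the target $m_n^{*}$ keeps decreasing, so inactive steps can only help; I would therefore compare $\lambda_{\min}(A_n)$ with a deterministic sequence $\ell_n\sim\beta_n^{-1}$ and prove $\lambda_{\min}(A_n)\ge c\beta_n^{-1}$ by induction for $n$ large. \emph{This comparison is the main obstacle}: pinning the bound at the order $\beta_n^{-1}$ rather than the weaker $\gamma_n$ requires tracking the slowly moving equilibrium $m_n^{*}$ jointly with the per-step error $o(\gamma_{n+1})$ and checking that neither the random truncation events nor the accumulation of errors can let $\lambda_{\min}(A_n)$ drift below $c\beta_n^{-1}$; the constraints $\gamma_n\beta_n\le\tfrac12$ and $\beta<\gamma-\tfrac12$ are exactly what make this balance close.

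Finally, the bound for $A_{n,\tau}$ follows cheaply. Unrolling \eqref{Antau} writes $A_{n,\tau}=\sum_{k=0}^{n}\tilde w_{n,k}A_k$ as a convex combination, with $\tilde w_{n,k}=\ln(k+1)^\tau/\sum_{j=0}^{n}\ln(j+1)^\tau\ge 0$ summing to one. Since $B\mapsto\lambda_{\min}(B)$ is concave and positively homogeneous on symmetric matrices, $\lambda_{\min}(A_{n,\tau})\ge\sum_{k}\tilde w_{n,k}\lambda_{\min}(A_k)\ge \min_{0\le k\le n}\lambda_{\min}(A_k)$. Combining the bound just obtained for the large indices with the almost surely positive minimum over the finitely many small indices (where each $A_k$ is positive definite) yields $\lambda_{\min}(A_{n,\tau})\ge c\,n^{-\beta}$ eventually, i.e. $\lambda_{\min}(A_{n,\tau})^{-1}=\mathcal{O}(n^\beta)$ almost surely, which closes the proof.
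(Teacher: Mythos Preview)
Your starting recursion is essentially the same as the paper's, and your handling of the projection step and of $\gamma_{n+1}^{2}\beta_{n+1}^{2}\lambda_{\max}(A_{n})$ via Proposition~\ref{grandvp} is correct. The gap is exactly where you flag it: you never actually prove that the recursion
\[
\lambda_{\min}(A_{n+1}) \ge (1-\gamma_{n+1}\beta_{n+1})^{2}\lambda_{\min}(A_{n}) + 2\gamma_{n+1} - o(\gamma_{n+1}) - 2\gamma_{n+1}\mathbf{1}_{\{\norm{Q_{n+1}}\norm{Z_{n+1}}>\beta_{n+1}\}}
\]
yields $\lambda_{\min}(A_{n})\ge c\beta_{n}^{-1}$. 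Your ``inactive steps can only help'' claim is misleading: if $\lambda_{\min}(A_{n})$ is still below the moving equilibrium $m_{n}^{*}$, a truncation step is a \emph{lost} opportunity to catch up, not a gain. The rigorous argument must therefore control the cumulative effect of truncation events, and this uses Assumption~\textbf{(A4)} and the constraint $\beta>(1-\gamma)/(q-1)$, neither of which appears in your proof.

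The paper closes this gap not by a fixed-point comparison but by unrolling the recursion explicitly:
\[
\lambda_{\min}(A_{n}) \ge 2\sum_{k=1}^{n}\prod_{j=k+1}^{n}(1-\gamma_{j}\beta_{j})^{2}\gamma_{k} - U_{n} - V_{n} - W_{n},
\]
where $U_{n}$ collects the $\gamma_{k}^{2}\beta_{k}^{2}\lambda_{\max}(A_{k})$ terms, $V_{n}$ the truncation indicators, and $W_{n}$ the projection residuals. The main positive sum is handled by a short telescoping trick: writing $2\gamma_{k}\ge \frac{1}{\beta_{k}}\bigl(2\gamma_{k}\beta_{k}-\gamma_{k}^{2}\beta_{k}^{2}\bigr)$ and using that $\beta_{k}^{-1}$ is decreasing gives directly $\sum_{k}\prod_{j>k}(1-\gamma_{j}\beta_{j})^{2}\gamma_{k}\ge c_{1}/\beta_{n}$. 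One then shows $U_{n}=o(\beta_{n}^{-1})$ (this is where $\beta<\gamma-1/2$ enters), $W_{n}$ decays exponentially, and---the step your sketch is missing---$V_{n}=o(\beta_{n}^{-1})$ via Robbins--Siegmund applied to $n^{v}\ln(n+1)^{-(1+\delta)}V_{n}$, using $\mathbb{E}[\mathbf{1}_{\{\norm{Q_{n+1}}\norm{Z_{n+1}}>\beta_{n+1}\}}|\mathcal{F}_{n}]\le C_{q}M^{2q}\beta_{n}^{-q}$ from \textbf{(A4)}. This is where $\beta>(1-\gamma)/(q-1)$ is needed.

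Your convexity argument for $A_{n,\tau}$ is clean and correct; the paper does not spell this part out.
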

The proof is given in Section \ref{sec::proof::prop}. 
 According to Taylor's theorem, and using Assumption $\textbf{(A2)}$, we obtain
\begin{align*}
	G(\theta_{n+1}) &= G(\theta_n)+\nabla G(\theta_n)^T(\theta_{n+1}-\theta_{n})\\
	&\quad+\frac{1}{2}(\theta_{n+1}-\theta_{n})^T\int_{0}^1\nabla^2G(\theta_{n+1}+t(\theta_{n}-\theta_{n+1}))dt(\theta_{n+1}-\theta_{n}) \\
	&\le G(\theta_n)+\nabla G(\theta_n)^T(\theta_{n+1}-\theta_{n})+\frac{L_{\nabla G}}{2}\norm{\theta_{n+1}-\theta_{n}}^2.
\end{align*}
From the definition of $\theta_{n}$ \eqref{thetan},
\begin{align*}
	G(\theta_{n+1}) \le G(\theta_n)-\nu_{n+1}\nabla G(\theta_{n})^TA_{n,\tau}\nabla g(X_{n+1},\theta_{n})+\frac{L_{\nabla G}}{2}\nu_{n+1}^2\norm{A_{n,\tau}}^2_{op}\norm{\nabla g(X_{n+1},\theta_{n})}^2.
\end{align*}
Letting $K_n:=G(\theta_{n})-G(\theta)$, we can rewrite the inequality as
\begin{align*}
	K_{n+1} \le K_n -\nu_{n+1}\nabla G(\theta_{n})^TA_{n,\tau}\nabla^2g(X_{n+1},\theta_{n}) +\frac{L_{\nabla G}}{2}\nu_{n+1}^2\norm{A_{n,\tau}}^2_{op}\norm{\nabla g(X_{n+1},\theta_{n})}^2,
\end{align*}
which implies
\begin{align*}
	\mathbb{E}\cro{K_{n+1}|\mathcal{F}_n} &\le K_n -\nu_{n+1}\nabla G(\theta_{n})^TA_{n,\tau}\nabla G\left( \theta_{n}\right) \\
	&\quad+\frac{L_{\nabla G}}{2}\nu_{n+1}^2\norm{A_{n,\tau}}^2_{op}\mathbb{E}\cro{\norm{\nabla g(X_{n+1},\theta_{n})}^2|\mathcal{F}_n}.
\end{align*}
According to Assumption $\textbf{(A1)}$, we have
\begin{align*}
		\mathbb{E}\cro{K_{n+1}|\mathcal{F}_n} &\le \pa{1+\frac{CL_{\nabla G}}{2}\nu_{n+1}^2\norm{A_{n,\tau}}^2_{op}}K_n-\nu_{n+1}\lambda_{\min}(A_{n,\tau})\norm{\nabla G(\theta_{n})}^2\\
			&\quad+\frac{CL_{\nabla G}}{2}\nu_{n+1}^2\norm{A_{n,\tau}}^2_{op}.
\end{align*}
With the help of Proposition \ref{grandvp},
$$\sum_{n=0}^\infty\nu_{n+1}^2\norm{A_{n,\tau}}^2_{op} < +\infty \quad a.s.$$
Subsequently, according to Robbins-Siegmund Theorem, $K_n$ is guaranteed to converge almost surely to a finite random variable, and 
$$\sum_{n=0}^\infty\nu_{n+1}\lambda_{\min}(A_{n,\tau})\norm{\nabla G(\theta_{n})}^2 < +\infty \quad a.s.$$
 With the help of Proposition \ref{petitvp},
 $$\sum_{n=0}^\infty\nu_{n+1}\lambda_{\min}(A_{n,\tau}) = +\infty \quad a.s.$$
It suggests that $\lim \inf_n\norm{\nabla G(\theta_{n})}=0$ almost surely and $\lim \inf_nK_n=0$ almost surely. As $K_n$ converges almost surely to a random variable, $G(\theta_n)$ converges almost surely to $G(\theta)$. By local strong convexity,
	$$\theta_{n}\xrightarrow[n\rightarrow\infty]{a.s.}\theta \quad\text{ and }\quad \theta_{n,\tau'}\xrightarrow[n\rightarrow\infty]{a.s.}\theta. $$
\subsection{Proof of Theorem \ref{rateTheta}}
\subsubsection{Rate of convergence of $\theta_{n}$.} Recall that
\begin{align*}
	\mathbb{E}\cro{G(\theta_{n+1})-G(\theta)|\mathcal{F}_n} &\le \pa{1+\frac{L_{\nabla G}C}{2}\nu_{n+1}^2\lambda_{\max}(A_{n,\tau})^2}\pa{G(\theta_n)-G(\theta)} \\
	&-\nu_{n+1}\nabla G(\theta_n)^TA_{n,\tau}\nabla G(\theta_n) + \frac{L_{\nabla G}C}{2}\nu_{n+1}^2\lambda_{\max}(A_{n,\tau})^2.
\end{align*}
Since the functional $G$ is locally strongly convex meaning that there is a neighborhood of $\theta$ where smallest eigenvalues of the Hessian are uniformly bounded below by a positive constant. That is, the Polyak-Lojasiewicz inequality is locally satisfied:   there is $r_{0}> 0$ and $c_{0} > 0$ such that for all $h \in \mathcal{B}(\theta,  r_{0})$, 
\[
\| \nabla G (h ) \|^{2} 
 \geq  c_{0}(G (h) - G(\theta) ).
 \]
 Then, since $\mathbf{1}_{\| \theta_{t} - \theta \| \leq r_{0}} = 1- \mathbf{1}_{\| \theta_{t} - \theta \| > r_{0}}$,
 \begin{align*}
 \left\| \nabla F \left( \theta_{t} \right) \right\|^{2}  & \geq c_{0}(G \left( \theta_{t} \right) - G(\theta) )\mathbf{1}_{\| \theta_{t} - \theta \| \leq r_{0}} \\
 & = c_{0}(G \left( \theta_{t} \right) - G(\theta) )  - c_{0}(G \left( \theta_{t} \right) - G(\theta)))\mathbf{1}_{\| \theta_{t} - \theta \| > r_{0}}
 \end{align*}
  Recall that $2\gamma+2\nu-2>1$, so that there exists $\eta>0$ such that $\eta < 2\gamma+2\nu-3$. We define $\widetilde{V}_n := n^{\eta}\pa{G(\theta_n)-G(\theta)}$, so that
\begin{align*}
	\mathbb{E}\cro{\widetilde{V}_{n+1}|\mathcal{F}_n} &= \pa{\frac{n+1}{n}}^\eta\pa{\pa{1+\frac{L_{\nabla GC}}{2}\nu_{n+1}^2\lambda_{\max}(A_{n,\tau})^2} - c_0\nu_{n+1}\lambda_{\min}(A_{n,\tau})}\widetilde{V}_n\\
	&+ \frac{L_{\nabla GC}}{2}\nu_{n+1}^2\lambda_{\max}(A_{n,\tau})^2(n+1)^\eta + 2^{\eta}c_0\nu_{n+1}\lambda_{\min}(A_{n,\tau})\mathbf{1}_{\| \theta_{n,\tau} - \theta \| > r_{0}} \widetilde{V}_n.
       \end{align*}
Let $\widetilde{U}_n :=\pa{\frac{n+1}{n}}^\eta\pa{\pa{1+\frac{L_{\nabla GC}}{2}\nu_{n+1}^2\lambda_{\max}(A_{n,\tau})^2} - c_0\nu_{n+1}\lambda_{\min}(A_{n,\tau})}$, then
\begin{align*}
	\mathbb{E}\cro{\widetilde{V}_{n+1}|\mathcal{F}_n} &\le \widetilde{V}_n + \frac{L_{\nabla GC}}{2}\nu_{n+1}^2\lambda_{\max}(A_{n,\tau})^2(n+1)^\eta + \widetilde{V}_n (\mathbf{1}_{\widetilde{U}_n>1} + 2^{\eta}c_0\nu_{n+1}\lambda_{\min}(A_{n,\tau})\mathbf{1}_{\| \theta_{n,\tau} - \theta \| > r_{0}}).
\end{align*}
As $\nu+\beta <1$, one can easily verify that $\mathbf{1}_{\widetilde{U}_n>1} \xrightarrow{a.s.}0$ and $\mathbf{1}_{\| \theta_{n,\tau} - \theta \| > r_{0}}\xrightarrow{a.s.}0$. Then,
\[
\sum_{t\geq 1}\widetilde{V}_n (\mathbf{1}_{\widetilde{U}_n>1} + 2^{\eta}c_0\nu_{n+1}\lambda_{\min}(A_{n,\tau})\mathbf{1}_{\| \theta_{n,\tau} - \theta \| > r_{0}}) < + \infty \quad a.s.
\] 
In addition, since $2\gamma +2\nu -\eta  -2 > 1$, it comes
\[
\sum_{t\geq 1}\frac{L_{\nabla GC}}{2}\nu_{n+1}^2\lambda_{\max}(A_{n,\tau})^2(n+1)^\eta < + \infty \quad a.s.
\]
 We can now apply the Robbins-Siegmund theorem. We therefore have $G(\theta_{n})-G(\theta) = o\pa{n^{-\eta}}$ for all $\eta < 2\gamma+2\nu-3$. Thanks to the local strong convexity of $G$, this leads to $\norm{\theta_n-\theta}^2 = o\pa{n^{-\eta}}$ a.s. We are now able to apply Theorem 3.1 and one obtains
	$$\norm{A_n - A}^2 = o\pa{\frac{\ln n^{1+\delta}}{n^{\min\{\gamma,\eta, 2 \beta (q-1)\}}}} a.s. \quad\text{and}\quad \norm{A_{n,\tau} - A}^2 = o\pa{\frac{\ln n^{1+\delta}}{n^{\min\{\eta,2 \beta (q-1)\}}}} a.s. $$
	According to Theorem 4.2  and Theorem 4.3 in \cite{boyer2022asymptotic}, one so has

\begin{equation}
\label{vitessethetan}
\norm{\theta_{n} - \theta}^2 = o\pa{\frac{\ln n^{1+\delta}}{n^{\nu}}} a.s.
\end{equation}

\subsubsection{Rate of convergence of $\theta_{n,\tau'}$.}
For all non-negative integer $n$,
$$\theta_{n+1}-\theta_n = -\nu_{n+1}A_{n,\tau}\nabla_hg(X_{n+1},\theta_{n}),$$
so that 
$$\frac{\theta_{n}-\theta-(\theta_{n+1}-\theta)}{\nu_{n+1}} = \frac{\theta_n-\theta_{n+1}}{\nu_{n+1}}= A_{n,\tau}\nabla_hg(X_{n+1},\theta_{n}).$$ 
In addition,
\begin{align*}
\theta_n-\theta &= H^{-1}\nabla_hg(X_{n+1},\theta_{n})+\pa{H^{-1}\nabla G(\theta_n)-H^{-1}\nabla_hg(X_{n+1},\theta_{n})}\\
\quad &-\pa{H^{-1}\nabla G(\theta_n)-(\theta_n-\theta)}.
\end{align*}
Finally,
\begin{align*}
\theta_{n} - \theta &= H^{-1}A_{n,\tau}^{-1}\frac{\theta_{n}-\theta-(\theta_{n+1}-\theta)}{\nu_{n+1}}+ H^{-1}\pa{\nabla G(\theta_n)-\nabla_hg(X_{n+1},\theta_{n})}
\\&\quad- H^{-1}\pa{\nabla G(\theta_n)-H(\theta_n-\theta)},
\end{align*}
which implies that
\begin{align*}
\theta_{n,\tau} -\theta&= t'_n\sum_{k=0}^n\ln (k+1)^{\tau'}H^{-1}A_{k,\tau}^{-1}\frac{\theta_{k}-\theta-(\theta_{k+1}-\theta)}{\nu_{k+1}}\\
&\quad+t'_n\sum_{k=0}^n\ln (k+1)^{\tau'}H^{-1}\pa{\nabla G(\theta_k)-\nabla_hg(X_{k+1},\theta_{k})}\\&\quad-t'_n\sum_{k=0}^n\ln (k+1)^{\tau'}H^{-1}\pa{\nabla G(\theta_k)-H(\theta_k-\theta)}
\end{align*}
where
$t'_n = \frac{1}{\sum^n_{k=0}\log(k+1)^{\tau'}}.$

\noindent\textbf{Rate of convergence of $t'_n\sum_{k=0}^n\log(k+1)^{\tau'}H^{-1}\pa{\nabla G(\theta_k)-\nabla_hg(X_{k+1},\theta_{k})}$.}
Analogous to the proof of Theorem \ref{consisAn}, since $\pa{\nabla G(\theta_k)-\nabla_hg(X_{k+1},\theta_{k})}_{k \geq 0}$ is a sequence of martingale differences, one can check with the help of a law of large numbers for martingales that 
$$\norm{t'_n\sum_{k=0}^n\log(k+1)^{\tau'}H^{-1}\pa{\nabla G(\theta_k)-\nabla_hg(X_{k+1},\theta_{k})}}_F^2 = o\pa{\frac{\ln n^{1+\delta}}{n}}.$$

\noindent\textbf{Rate of convergence of $t'_n\sum_{k=0}^n\log(k+1)^{\tau'}H^{-1}\pa{\nabla  G(\theta_k)-H(\theta_k-\theta)}$.} With the help of Assumption \textbf{(A3)} and since $\theta_{n}$ converges almost surely to $\theta$, $\left\| \nabla  G(\theta_n)-H(\theta_n-\theta) \right\| = O \left( \left\| \theta_{n} - \theta \right\|^{2} \right)$ a.s. Then, with the help of 
  equality \eqref{vitessethetan} and since $\nu < 1$, we can prove with the help of equality \eqref{equationatilde} that 
$$\norm{t'_n\sum_{k=0}^n\log(k+1)^{\tau'}H^{-1}\pa{\nabla  G(\theta_k)-H(\theta_k-\theta)}}_F = o\pa{\frac{\ln n^{1+\delta}}{n^\nu}},$$
and this term is negligible as soon as $\nu > 1/2$.

\noindent\textbf{Rate of convergence of $t'_n\sum_{k=0}^n\ln (k+1)^{\tau'}H^{-1}A_{k+1,\tau}^{-1}\frac{\theta_{k}-\theta-(\theta_{k+1}-\theta)}{\nu_{k+1}}$.}
First, remark that
\begin{align*}
	&t'_n\sum_{k=0}^n\ln (k+1)^{\tau'}H^{-1}A_{k,\tau}^{-1}\frac{\theta_{k}-\theta-(\theta_{k+1}-\theta)}{\nu_{k+1}} \\&= t'_n\sum_{k=0}^n\ln (k+1)^{\tau'}H^{-1}\frac{A_{k,\tau}^{-1}(\theta_{k}-\theta)-A_{k+1,\tau}^{-1}(\theta_{k+1}-\theta)}{\nu_{k+1}}\\
	&\quad+t'_n\sum_{k=0}^n\ln (k+1)^{\tau'}H^{-1}\frac{(A_{k+1,\tau}^{-1}-A_{k,\tau}^{-1})(\theta_{k+1}-\theta)}{\nu_{k+1}}.
\end{align*}
With the help of an Abel's transform and since $A_{n,\tau}$ converges almost surely to $H^{-1}$, using equality \eqref{vitessethetan} and performing calculations analogous to those used in deriving equality \eqref{abel}, one has
$$\norm{t'_n\sum_{k=0}^n\ln (k+1)^{\tau'}H^{-1}\frac{A_{k,\tau}^{-1}(	\theta_{k}-\theta)-A_{k+1,\tau}^{-1}(\theta_{k+1}-\theta)}{\nu_{k+1}}}_F^2 = o\pa{\frac{\ln n^{1+\delta}}{n^{2-\nu}}}.$$
Observe that
$$A_{k+1,\tau}^{-1}-A_{k,\tau}^{-1} = A_{k+1,\tau}^{-1}(I_d-A_{k+1,\tau}A_{k,\tau}^{-1})=A_{k+1,\tau}^{-1}(A_{k,\tau}-A_{k+1,\tau})A_{k,\tau}^{-1}.$$
Therefore,
\begin{align*}
	&t'_n\sum_{k=0}^n\ln (k+1)^{\tau'}H^{-1}\frac{(A_{k+1,\tau}^{-1}-A_{k,\tau}^{-1})(\theta_{k+1}-\theta)}{\nu_{k+1}}\\
	 &= -\sum_{k=0}^n\ln (k+1)^{\tau'}H^{-1}\frac{A_{k+1,\tau}^{-1}(A_{k+1}-A_{k,\tau})A_{k,\tau}^{-1}(\theta_{k+1}-\theta)}{\nu_{k+1}}\ln (k+1)^\tau t_k.
\end{align*}
Then,
\begin{align*}
	&\norm{t'_n\sum_{k=0}^n\ln (k+1)^{\tau'}H^{-1}\frac{(A_{k+1,\tau}^{-1}-A_{k,\tau}^{-1})(\theta_{k+1}-\theta)}{\nu_{k+1}}}_F\\
	&\le t'_n\ln(n+1)^{\tau'}\sum_{k=0}^n\frac{\norm{H^{-1}A_{k+1,\tau}^{-1}(A_{k+1}-A_{k,\tau})A_{k,\tau}^{-1}}_{op}\norm{\theta_{k+1}-\theta}}{\nu_{k+1}}\ln (k+1)^\tau t_k.
\end{align*}
Thus, as $\norm{\theta_{n} - \theta}^2 = o\pa{\frac{\ln n^{1+\delta}}{n^{\nu}}} a.s.$ and since  $A_{k+1}$ and $A_{k,\tau}$ converge to $H^{-1}$,  we have for all $\delta >0$
$$\norm{t'_n\sum_{k=0}^n\ln (k+1)^{\tau'}H^{-1}\frac{(A_{k+1,\tau}^{-1}-A_{k,\tau}^{-1})(\theta_{k+1}-\theta)}{\nu_{k+1}}}_F^2 = o\pa{\frac{\ln(n+1)^{1+\delta+ 2 \tau}}{n^{2-\nu}}},$$
which is negligible as soon as $2-\nu > 1$.
Finally, we can conclude that
$$\norm{\theta_{n,\tau} - \theta}^2 = o\pa{\frac{\ln n^{1+\delta}}{n}} a.s.$$

\subsection{Proof of Proposition \ref{petitvp}}\label{sec::proof::prop} 
Note that for all $n \geq 0$, $\lambda_{\min}\left( A_{n} \right)> 0$. 
	According to the definition of $A_n$, one has 
	\begin{align*}
		\lambda_{\min}(A_{n+1}) &= \lambda_{\min}\pa{A_n-\gamma_{n+1}\pa{T_{n+1}A_n + A_n T_{n+1}^T- 2I_d}\mathbf{1}_{\norm{Q_{n+1}}\norm{Z_{n+1}}\le\beta_{n+1}}} {- \lambda_{\max}\left(  \Pi_{n+1}^{\perp} \right)} \\
		&\ge \lambda_{\min} (\pa{I_d -\gamma_{n+1}T_{n+1}\mathbf{1}_{\norm{Q_{n+1}}\norm{Z_{n+1}}\le\beta_{n+1}}}A_n  \pa{I_d -\gamma_{n+1}T_{n+1}^T\mathbf{1}_{\norm{Q_{n+1}}\norm{Z_{n+1}}\le\beta_{n+1}}} \\ 
		&\quad+ \pa{2\gamma_{n+1}I_d-\gamma_{n+1}^2T_{n+1}A_nT_{n+1}^T}\mathbf{1}_{\norm{Q_{n+1}}\norm{Z_{n+1}}\le\beta_{n+1}} )    {- \lambda_{\max}\left(  \Pi_{n+1}^{\perp} \right)}\\
		&\ge \lambda_{\min} \pa{I_d -\gamma_{n+1}T_{n+1}\mathbf{1}_{\norm{Q_{n+1}}\norm{Z_{n+1}}\le\beta_{n+1}}}A_n \pa{I_d -\gamma_{n+1}T_{n+1}^T\mathbf{1}_{\norm{Q_{n+1}}\norm{Z_{n+1}}\le\beta_{n+1}}} ) \\
		&\quad+2\gamma_{n+1} - \gamma_{n+1}^2\beta_{n+1}^2\lambda_{\max}(A_n) - 2\gamma_{n+1}\mathbf{1}_{\norm{Q_{n+1}}\norm{Z_{n+1}}\ge\beta_{n+1}}    {- \lambda_{\max}\left(  \Pi_{n+1}^{\perp} \right)}
	\end{align*}
	For all $h \in \mathbb{R}^p$, one has 
	\begin{align*}
		h^T&\pa{I_d -\gamma_{n+1}T_{n+1}\mathbf{1}_{\norm{Q_{n+1}}\norm{Z_{n+1}}\le\beta_{n+1}}}A_n\pa{I_d -\gamma_{n+1}T_{n+1}^T\mathbf{1}_{\norm{Q_{n+1}}\norm{Z_{n+1}}\le\beta_{n+1}}}h\\
		&\ge \norm{A_n^{1/2}\pa{I_d -\gamma_{n+1}T_{n+1}\mathbf{1}_{\norm{\norm{Q_{n+1}}\norm{Z_{n+1}}\le\beta_{n+1}}}}h}^2\\
		&\ge \lambda_{\min}(A_n)\norm{\pa{I_d-\gamma_{n+1}T_{n+1}\mathbf{1}_{\norm{Q_{n+1}}\norm{Z_{n+1}}\le\beta_{n+1}}}h}^2\\
		&\ge\lambda_{\min}\pa{A_n}\pa{1-\gamma_{n+1}\beta_{n+1}}^2\norm{h}^2
	\end{align*}
	Thus, 
	\begin{align*}
		&\lambda_{\min}\pa{\pa{I_d -\gamma_{n+1}T_{n+1}\mathbf{1}_{\norm{Q_{n+1}}\norm{Z_{n+1}}\le\beta_{n+1}}}A_n\pa{I_d -\gamma_{n+1}T_{n+1}^T\mathbf{1}_{\norm{Q_{n+1}}\norm{Z_{n+1}}\le\beta_{n+1}}}}\\ &\quad\ge\lambda_{\min}\pa{A_n}\pa{1-\gamma_{n+1}\beta_{n+1}}^2.
	\end{align*}
	Therefore,
	\begin{align*}
		\lambda_{\min}(A_{n+1}) &\ge\pa{1-\gamma_{n+1}\beta_{n+1}}^2\lambda_{\min}\pa{A_n}+2\gamma_{n+1}-\gamma_{n+1}^2\beta_{n+1}^2\lambda_{\max}(A_n) \\
		&\quad- 2\gamma_{n+1}\mathbf{1}_{\norm{Q_{n+1}}\norm{Z_{n+1}}> \beta_{n+1}}   {- \lambda_{\max}\left(  \Pi_{n+1}^{\perp} \right)}.
	\end{align*}
	Note that $(\beta_n\gamma_n)_n$ is a decreasing sequence. Let $U_n$, $V_n$  {and $W_{n}$} be sequences defined as $$U_n := \sum_{k=1}^{n}\prod_{j=k+1}^{n}\pa{1-\gamma_{j}\beta_{j}}^2\gamma_{k}^2\beta_{k}^2\lambda_{\max}(A_n),$$
	$$V_n := 2\sum_{k=1}^{n}\prod_{j=k+1}^{n}\pa{1-\gamma_{j}\beta_{j}}^2\gamma_{k}\mathbf{1}_{\norm{Q_{n+1}}\norm{Z_{n+1}}\ge \beta_{k+1}} $$
	and
	$$
	W_{n}:=  \sum_{k=1}^{n}\prod_{j=k+1}^{n}\pa{1-\gamma_{j}\beta_{j}}^2  {  \lambda_{\max}\left(  \Pi_{n+1}^{\perp} \right)}
	$$
	One can prove by induction that
	\begin{align*}
		\lambda_{\min}(A_{n}) &\ge\lambda_{\min}\pa{A_{0}}\prod_{k=1}^{n}\pa{1-\gamma_{k}\beta_{k}}^2 + 2\sum_{k=1}^{n}\prod_{j=k+1}^{n}\pa{1-\gamma_{j}\beta_{j}}^2\gamma_{k} - U_n - V_n {-W_{n}},
	\end{align*}
with the convention $\prod_{j=n+1}^{n} \left( 1- \gamma_{j} \beta_{j} \right)^{2} = 1$.
One has  { since $\mathbf{1}_{\Pi_{n}^{\perp} \neq 0}$ converges almost surely to $0$,
\[
  W_{n} \leq \prod_{k=1}^{n} \left( 1- \gamma_{k}\beta_{k} \right)^{2} \sum_{k=1}^{n} \prod_{j=1}^{k} \left( 1-  \gamma_{j}\beta_{j} \right)^{-2} \lambda_{\max} \left( \Pi_{k}^{\perp} \right) = O \left(  \prod_{k=1}^{n} \left( 1- \gamma_{k}\beta_{k} \right)^{2} \right) \quad a.s.
\]
and this term converges exponentially fast to $0$.
In addition}
	$$U_{n+1} = \pa{1-\gamma_{n+1}\beta_{n+1}}^2U_n + \gamma_{n+1}^2\beta_{n+1}^2\lambda_{\max}(A_n),$$
	since $\lambda_{\max}(A_n) = o\pa{\ln n^{1+\delta}n^{1-\gamma}} a.s.$,
	$$U_n = o\pa{\ln n^{1+\delta}\beta_{n+1}n^{1-\gamma}\gamma_{n+1}} = o\pa{\frac{1}{\beta_n}}$$
	since $\beta < \gamma -1/2$. One also has
	$$V_{n+1}= \pa{1-\gamma_{n+1}\beta_{n+1}}^2V_n + \gamma_{n+1}\mathbf{1}_{\norm{Q_{n+1}}\norm{Z_{n+1}}\ge\beta_{n+1}}.$$
	We define $V'_n := n^{v}\ln (n+1)^{-(1+\delta)} V_n$ for some $v > 0$, then, for $n$ large enough,
	\begin{align*}
		\mathbb{E}\cro{V'_{n+1}|\mathcal{F}_n} &\le \frac{\ln n^{1+\delta}}{ \ln (n+1)^{1+\delta}} \frac{(n+1)^{v}}{n^{v}}  \pa{1-\gamma_{n+1}\beta_{n+1}}^2V'_n + \frac{\gamma_{n+1}(n+1)^{v}C_q}{\ln (n+1)^{1+\delta}\beta_n^q} M^{2q}\\
& \le  V'_n + \frac{\gamma_{n+1}(n+1)^{v}C_q}{\ln (n+1)^{1+\delta}\beta_n^q} M^{2q}		. 
	\end{align*}
In order to apply the Robbins-Siegmund theorem, let us take $v = \gamma + q\beta -1$, then $V_{n}'$ converges almost surely to a finite random variable for all $\delta$, which can be translated by 
\[
V_n = o\left(\frac{\ln (n+1)^{1+\delta}}{n^{\gamma + q\beta -1}} \right) \quad a.s
\]
which is  negligible as soon as $\beta > \frac{1-\gamma}{q-1}$.
	It is obvious that $\lambda_{\min}\pa{A_{0}}\prod_{k=1}^{n}\pa{1-\gamma_{k}\beta_{k}}^2 \ge 0$. Finally
	\begin{align*}
		\sum_{k=1}^{n}\prod_{j=k+1}^{n}\pa{1-\gamma_{j}\beta_{j}}^2\gamma_{k}
		&\ge\sum_{k=1}^{n}\frac{1}{2\beta_k}\prod_{j=k+1}^{n}\pa{1-\gamma_{j}\beta_{j}}^22\gamma_{k}\beta_k\\
		&\ge\sum_{k=1}^{n}\frac{1}{2\beta_k}\prod_{j=k+1}^{n}\pa{1-\gamma_{j}\beta_{j}}^2\pa{2\gamma_{k}\beta_{k}-\beta_k^2\gamma_k^2}\\
		&= \sum_{k=1}^{n}\frac{1}{2\beta_k}\pa{\prod_{j=k+1}^{n}\pa{1-\gamma_{j}\beta_{j}}^2 - \prod_{j=k}^{n}\pa{1-\gamma_{j}\beta_{j}}^2}
	\end{align*}
	Since $\pa{\frac{1}{\beta_{n}}}_n$ is a decreasing sequence, one has
	\begin{align*}
		\sum_{k=1}^{n}\prod_{j=k+1}^{n}\pa{1-\gamma_{j}\beta_{j}}^2\gamma_{k} &\geq	\frac{1}{2\beta_n}\sum_{k=1}^{n}\pa{\prod_{j=k+1}^{n}\pa{1-\gamma_{j}\beta_{j}}^2 - \prod_{j=k}^{n}\pa{1-\gamma_{j}\beta_{j}}^2}\\
		&\ge\frac{1}{2\beta_n}\pa{1-\prod_{j=1}^{n}\pa{1-\gamma_{j}\beta_{j}}^2}\\
		&\ge \frac{1-\pa{1-\gamma_{1}\beta_{1}}^2}{2\beta_n}.
	\end{align*}
	Thus, $\lambda_{\min}(A_{n})\ge\frac{c_{1}}{\beta_n} + o \left( \frac{1}{\beta_{n}} \right)$ a.s with $c_{1} = 1-\pa{1-\gamma_{1}\beta_{1}}^2$, which implies that 
	$$\frac{1}{\lambda_{\min}(A_{n})}  = O \left(  \beta_{n} \right) \quad a.s.$$

\bibliographystyle{apalike}
\bibliography{Biblio_Rapport}

\begin{appendix}
\section{How to verify assumptions in practice?}\label{appendix}
\subsection{Linear model}
We consider the case of the linear model, i.e., where $(X,Y)$ lies in $\mathbb{R}^{d} \times \mathbb{R}$ and
\[
Y = X^{T}\theta + \epsilon,
\]
with $\epsilon$ independent of $X$ and $\theta \in \mathbb{R}^{d}$. The functional to minimize is defined for all $h \in \mathbb{R}^{d}$ by
\[
G(h) = \mathbb{E} \left[ \frac{1}{2} \left( Y - X^{T} h \right)^{2} \right] =: \mathbb{E} \left[ g(X,Y,h) \right].
\]
Then, for all $x,y \in \mathbb{R}^{d} \times \mathbb{R}$, the function $g$ is twice differentiable, with
\[
\nabla_{h} g(x ,y , h) = - (y - x^{T} h) x, \quad \quad \text{and} \quad \quad \nabla_{h}^{2} g(x , h) = xx^{T}.
\]
If $X$ admits a moment of order $2$, then $G$ is twice differentiable and, for all $h \in \mathbb{R}^{d}$,
\[
\nabla^{2} G(h) = \mathbb{E} \left[ XX^{T} \right],
\]
so that assumption \textbf{(A3)} is satisfied. Suppose also that $\mathbb{E} \left[ XX^{T} \right]$ is positive, ensuring assumption \textbf{(A2)}. 
Additionally, suppose that $X$ admits a moment of order $2q$, which satisfies Assumption \textbf{(A4)}. Finally, suppose that $X$ (resp. $\epsilon$) admits a moment of order $8$ (resp. $4$), ensuring that assumption \textbf{(A1)} holds. Indeed, 
\begin{align*}
\mathbb{E} \left[  \left\| \nabla_{h}  g(X, Y ,h) \right\|^{2} \right] & \leq \mathbb{E} \left[ \| X \|^{2} \left| X^{T} (\theta - h) + \epsilon \right|^{2} \right]  \\
& \leq 2 \mathbb{E} \left[ \| X \|^{4} \right] \| \theta - h \|^{2} + 2 \mathbb{E} \left[ \epsilon^{2} \right] \mathbb{E} \left[ \| X \|^{2} \right].
\end{align*}
Then, since for all $h \in \mathbb{R}^{d}$, $\nabla^{2} G(h) = \mathbb{E} \left[ XX^{T} \right]$ is positive, we have
\[
\| \theta - h \|^{2} \leq \frac{2}{\mu} \left( G(h) - G(\theta) \right),
\]
where $\mu = \lambda_{\min} \left(\mathbb{E} \left[ XX^{T} \right]\right)$, so that
\[
\mathbb{E} \left[ \nabla_{h}  g(X, Y ,h)^{2} \right] \leq 2 \mathbb{E} \left[ \| X \|^{2} \right] \frac{2}{\mu} (G(h) - G(\theta)) + 2 \mathbb{E} \left[ \epsilon^{2} \right] \mathbb{E} \left[ \| X \|^{2} \right].
\]
In addition,
\begin{align*}
\mathbb{E} \left[ \nabla_{h}  g(X, Y ,h)^{4} \right] & \leq \mathbb{E} \left[ \| X \|^{4} \left| X^{T} (\theta - h) + \epsilon \right|^{4} \right]  \\
& \leq 8 \mathbb{E} \left[ \| X \|^{8} \right] \| \theta - h \|^{4} + 8 \mathbb{E} \left[ \epsilon^{4} \right] \mathbb{E} \left[ \| X \|^{4} \right],
\end{align*}
and Assumption \textbf{(A1b)} is satisfied taking any $R>0$.

\medskip
\paragraph{Summary:} If $\epsilon$ admits a moment of order $4$, if $X$ admits a moment of order $\max \lbrace 8 , 2q \rbrace $, and if $\mathbb{E} \left[ XX^{T} \right]$ is positive, then the assumptions are satisfied.

\subsection{Logistic regression}

Let us consider the case of logistic regression, where $(X,Y) \in \mathbb{R}^{d} \times \{0,1\}$ and the conditional distribution of $Y$ given $X$ follows a Bernoulli law:
\[
Y | X \sim \mathcal{B}(\pi(\theta^{T}X)),
\]
where $\pi$ is the sigmoid function, defined for all $z \in \mathbb{R}$ as
\[
\pi(z) = \frac{e^{z}}{1+e^{z}}.
\]
The objective functional to minimize is given, for all $ h \in \mathbb{R}^{d} $, by
\[
G(h) = \mathbb{E} \left[ \log \left( 1+ \exp \left( h^{T}X \right) \right) - YX^{T}h \right] =: \mathbb{E} \left[ g(X,Y,h) \right].
\]
For all $(x,y) \in \mathbb{R}^{d} \times \{0,1\}$, the function $ g(x,y, \cdot) $ is twice differentiable, with
\[
\nabla_h g(x,y,h) = (\pi(x^{T}h) - y)x
\]
and
\[
\nabla_h^2 g(x,y,h) = \pi(x^{T}h) (1 - \pi(x^{T}h)) xx^{T}.
\]
If $X$ has a finite fourth-order moment, Assumption \textbf{(A1)} holds, i.e.,
\[
\mathbb{E} \left[ \left\| \nabla_h g(X,Y,h) \right\|^{4} \right] \leq \mathbb{E} \left[ \left\| X \right\|^{4} \right].
\]
Furthermore, $ G $ is also twice differentiable, with
\[
\nabla^2 G(h) = \mathbb{E} \left[ \pi(X^{T}h)(1 - \pi(X^{T}h)) XX^{T} \right],
\]
and
\[
\left\| \nabla^2 G(h) \right\|_{op} \leq \frac{1}{4} \mathbb{E}\left[ \|X\|^{2} \right].
\]
If $\nabla^2 G(\theta)$ is positive, a condition satisfied, for instance, when $X$ follows an elliptic distribution (see \cite{gadat2017optimal}), then Assumption \textbf{(A2)} is verified.
Additionally, if $X$ has a finite third-order moment, Assumption \textbf{(A3)} holds (see Lemma 6.2 in \cite{bercu2020efficient}):
\[
\left\| \nabla^2 G(h) - \nabla^2 G(\theta) \right\| \leq \frac{1}{12\sqrt{3}} \mathbb{E} \left[ \| X \|^{3} \right] \| h - \theta \| .
\]
Finally, Assumption \textbf{(A4)} is satisfied if $X$ has a finite $2q$-th moment, since
\[
\mathbb{E} \left[ \left\| \nabla_h^2 g(X,Y,h) \right\|_{op}^{q} \right] \leq \frac{1}{4^{q}} \mathbb{E} \left[ \| X \|^{2q} \right].
\]

\paragraph*{Summary:} If $X$ admits a moment of order $\max\lbrace 3,2q \rbrace$ and if $\nabla^{2}G(\theta)$ is positive (which is satisified if $X$ follows an elliptics distribution for instance), then the assumptions are satisfied.

\subsection{Estimation of the median}
Let us consider a continuous random vector $X$ in $\mathbb{R}^{d}$ satisfying the following two standard assumptions (see \cite{HC} among others):
\begin{itemize}
    \item The distribution of $X$ is not concentrated on a single straight line: for all $h \in \mathbb{R}^{d}$, there exists $h' \in \mathbb{R}^{d}$  such that $h^{T} h' = 0$ and 
    \[
    \mathbb{V} \left[   X^{T} h'   \right] > 0.
    \]
    \item The distribution of $X$ is not concentrated around single points: there exists a constant $C$ such that for all $h \in \mathbb{R}^{d}$,
    \[
 \mathbb{E} \left[ \frac{1}{\| X - h \|^{2q}} \right] \leq C.
    \]
\end{itemize}
This latter assumption is closely related to small-ball probabilities and is not restrictive as long as $d \geq 2q+2$ (see \cite{HC} among others). 
Under these conditions, the geometric median of $X$ is the unique minimizer of the functional defined for all $h \in \mathbb{R}^{d}$ by
\[
G(h) := \mathbb{E} \left[ \| X - h \| - \| X \| \right] =: \mathbb{E} \left[ g(X,h) \right].
\]
For all $h \in \mathbb{R}^{d}$ and $x \neq h$, the function $g(x, \cdot)$ is twice differentiable at $h$, with
\[
\nabla_{h} g(x,h) = - \frac{x - h}{\| x - h \|}, \quad \quad \text{and} \quad \quad \nabla_{h}^{2} g(x,h) = \frac{1}{\|x - h \|} \left( I_{d} - \frac{(x - h)(x - h)^{T}}{\| x - h \|^{2}} \right).
\]
Thus, Assumption \textbf{(A1)} holds with
\[
\mathbb{E} \left[ \left\| \nabla_{h} g(X , h) \right\|^{4} \right] = 1.
\]
Furthermore, Assumption \textbf{(A2)} is satisfied since
\[
\nabla^{2}G(h) = \mathbb{E} \left[ \frac{1}{\|X - h \|} \left( I_{d} - \frac{(X - h)(X - h)^{T}}{\| X - h \|^{2}} \right) \right],
\]
and
\[
\left\| \nabla^{2}G(h) \right\|_{op} \leq \mathbb{E} \left[ \frac{1}{\| X - h \|} \right] \leq C^{1/2q}.
\]
The positivity of $\nabla^{2}G(m)$ (where $m$ is the median of $X$) is established by Proposition 2.1 in \cite{HC}. Moreover, Assumption \textbf{(A4)} is also satisfied, and Assumption \textbf{(A3)} is verified in Lemma 3.3 of \cite{HC}.

\paragraph*{Summary:} If the distribution of the continuous random vector $X$ is not concentrated along a single straight line nor around single points, then all assumptions are satisfied.


\subsection{Estimation of $p$-means}
Let us now consider the estimation of the $p$-mean of a continuous random vector $X$ in $\mathbb{R}^{d}$, where $p \in (1,2)$. The $p$-mean $m$ of $X$ is defined as the minimizer of the functional defined for all $h \in \mathbb{R}^d$
\[
G(h) = \mathbb{E} \left[ \left\| X - h \right\|^{p} \right] =: \mathbb{E}\left[ g(X,h) \right] .
\]
For any $x$, the function $g(x,.)$ is differentiable, and for any $h$, and for all $x \neq h$, it is twice differentiable at $h$, with
\begin{align*}
    \nabla_{h}g(x,h) &= - (x-h)\|x-h\|^{p-2},
    \end{align*}
    and
    \begin{align*}
    \nabla_{h}^2 g(x,h) &= \|x-h\|^{p-2}\left(I_d-(2-p)\frac{(x-h)(x-h)^T}{\|x-h\|^2}\right).
\end{align*}
Since $p \in (1,2)$, we have $2(p-1) < p$. Denoting by $m_{p}$ the $p$-mean of $X$, we obtain
\begin{align*}
\mathbb{E} \left[ \left\| \nabla_{h}g(X,h) \right\|^{2} \right]  = \mathbb{E} \left[ \left\| X - h\right\|^{2(p-1)} \right] \
 \leq 1 + \mathbb{E} \left[ \left\| X - h\right\|^{p} \right] \
 \leq 1 + G(m_{p}) + G(h) - G(m_{p}).
\end{align*}
Moreover,
\begin{align*}
\mathbb{E} \left[ \left\| \nabla_{h}g(X,h) \right\|^{4} \right] &= \mathbb{E} \left[ \left\| X - h\right\|^{4(p-1)} \right] \\
&\leq 2^{4p-5} \mathbb{E}\left[ \left\| X - m_{p} \right\|^{4(p-1)} \right] + 2^{4p-5} \left\| m_{p} - h \right\|^{4(p-1)}.
\end{align*}
Thus, Assumption \textbf{(A1)} holds for any $R>0$. 
Now, suppose that the distribution of $X$ is not concentrated around single points, i.e., there exists a constant $C_{q}$ such that for all $h \in \mathbb{R}^{d}$,
\[
\mathbb{E} \left[ \frac{1}{\| X - h \|^{q(2-p)}} \right] \leq C_{q}.
\]
As in the case of the median, this assumption is closely related to small-ball probabilities and is non-restrictive as long as $d \geq 2q +2$. Under this assumption, Assumption \textbf{(A4)} is satisfied with
\[
\mathbb{E}\left[ \left\|  \nabla_{h}^{2}g(X , h ) \right\|_{op}^{q} \right] \leq  (2-p)^{q}  \mathbb{E} \left[ \frac{1}{\| X - h \|^{q(2-p)}} \right] \leq (2-p)^{q} C_{q}.
\]
Furthermore, analogous to the median, one can verify that Assumption \textbf{(A3)} holds (for instance, by adapting the proof of Lemma 5.1 in \cite{CCG2015}). Finally, we have
\[
\nabla^{2}G(h) = \mathbb{E} \left[ \|X-h\|^{p-2}\left(I_d-(2-p)\frac{(X-h)(X-h)^T}{\|X-h\|^2}\right) \right]
\]
and
\[
\lambda_{\min} \left( \nabla^{2}G\left(m_{p} \right) \right) \geq (p-1) \mathbb{E} \left[ \frac{1}{\left\| X - m_{p} \right\|^{2-p}} \right] > 0,
\]
thus satisfying Assumption \textbf{(A2)}.

\medskip

\paragraph*{Summary:} If the distribution of $X$ is not concentrated around single points and if $X$ has a finite moment of order $4(p-1)$, then all the required assumptions are satisfied.

\section{Useful lemmas}
The following lemma is a corollary of the Robbins-Siegmund theorem \cite{robbins1971convergence}.
\begin{lem}\label{corDuflo}
	Let $(V_n)$, $(B_n)$, $(D_n)$ and $(a_n)$ be positive sequences adapted to
	$\mathbb{F} = (\mathcal{F}_n)$. Assume that $V_0$ is integrable and, for all $n \ge 0$,
	$$\mathbb{E}\cro{V_{n+1}|\mathcal{F}_n}\le V_n + B_n -D_n \quad a.s.$$
	Assume also that $\sum_{n=0}^\infty \frac{B_n}{a_n} < +\infty$ a.s. If $a_n \rightarrow \infty$, then  $V_n = o\pa{a_n} a.s.$
\end{lem}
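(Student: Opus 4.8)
The plan is to derive this from the Robbins--Siegmund theorem after normalizing by $a_n$. Throughout I treat $(a_n)$ as a deterministic non-decreasing sequence with $a_n\to\infty$, which is the situation in every application (e.g. $a_n=\zeta_n^2$ in Proposition \ref{grandvp}). Set $W_n:=V_n/a_n$; this is a nonnegative adapted sequence and $W_0=V_0/a_0$ is integrable. First I would divide the hypothesis $\mathbb{E}\cro{V_{n+1}\mid\mathcal{F}_n}\le V_n+B_n-D_n$ by $a_{n+1}$ and write $a_n/a_{n+1}=1-\delta_n$ with $\delta_n:=(a_{n+1}-a_n)/a_{n+1}\in[0,1)$, which gives
\[
\mathbb{E}\cro{W_{n+1}\mid\mathcal{F}_n}\le W_n+\frac{B_n}{a_{n+1}}-\pa{\delta_n W_n+\frac{D_n}{a_{n+1}}}.
\]

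Since $a_{n+1}\ge a_n$ one has $B_n/a_{n+1}\le B_n/a_n$, so $\sum_n B_n/a_{n+1}\le\sum_n B_n/a_n<\infty$ a.s., while the subtracted term $\delta_n W_n+D_n/a_{n+1}$ is nonnegative. Hence the Robbins--Siegmund theorem applies with multiplicative term $\alpha_n\equiv0$, additive term $\beta_n=B_n/a_{n+1}$, and decrement $\gamma_n=\delta_n W_n+D_n/a_{n+1}$. It yields two conclusions simultaneously: $W_n$ converges almost surely to a finite nonnegative limit $W_\infty$, and $\sum_n\gamma_n<\infty$ a.s., so in particular $\sum_n\delta_n W_n<\infty$ a.s.

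The main obstacle is the final step, since the two conclusions above only give $V_n=O(a_n)$ and I must still rule out $W_\infty>0$; this is precisely where $a_n\to\infty$ is used. For a non-decreasing sequence one has the telescoping identity $\prod_{k=0}^{n-1}(1-\delta_k)=a_0/a_n\to0$, which is equivalent to $\sum_n\delta_n=+\infty$. Combining this divergence with the summability $\sum_n\delta_n W_n<\infty$ and the convergence $W_n\to W_\infty$ forces $W_\infty=0$ almost surely: on the event $\{W_\infty>0\}$ one would have $W_n\ge W_\infty/2>0$ for all $n$ large enough, whence $\sum_n\delta_n W_n\ge(W_\infty/2)\sum_{n\ge N}\delta_n=+\infty$, contradicting the summability just established. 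Therefore $W_\infty=0$, i.e. $V_n/a_n\to0$, which is exactly $V_n=o(a_n)$ a.s.
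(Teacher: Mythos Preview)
Your argument is correct and is essentially the classical proof one finds in Duflo's book (Chapter 1.III), which is exactly where the paper sends the reader rather than giving its own proof. The only point worth flagging is that you explicitly restrict to $(a_n)$ deterministic and non-decreasing; this is genuinely needed for your division-by-$a_{n+1}$ step (otherwise $a_{n+1}$ cannot be pulled out of $\mathbb{E}[\,\cdot\mid\mathcal{F}_n]$) and for the telescoping identity $\prod_{k}(1-\delta_k)=a_0/a_n$, and it is satisfied in every application in the paper, so the restriction is harmless here.
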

The proof of this lemma is given in Chapter 1.III in \cite{duflo1990methodes}, and here we give a generalized version of it.
\begin{lem}\label{corRS}
	Let $(V_n)$, $(B_n)$, $(D_n)$, $E_n$ and $(a_n)$ be positive sequences adapted to
	$\mathbb{F} = (\mathcal{F}_n)$. Assume that $V_0$ is integrable and, for all $n \ge 0$,
	$$\mathbb{E}\cro{V_{n+1}|\mathcal{F}_n}\le \pa{1+E_n}V_n + B_n -D_n \quad a.s.$$
	Assume also that $\sum_{n=0}^\infty E_n < +\infty$ a.s. and $\sum_{n=0}^\infty \frac{B_n}{a_n} < +\infty$ a.s. If $a_n \rightarrow \infty$, then $V_n = o\pa{a_n} a.s.$
\end{lem}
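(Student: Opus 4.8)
The plan is to reduce Lemma~\ref{corRS} to the already-established Lemma~\ref{corDuflo} by absorbing the multiplicative perturbation $\pa{1+E_n}$ into a discount factor. First I would introduce the predictable product $\Pi_n := \prod_{k=0}^{n-1}(1+E_k)^{-1}$, with the convention $\Pi_0 = 1$. Since each $E_k$ is $\mathcal{F}_k$-measurable, $\Pi_n$ is $\mathcal{F}_{n-1}$-measurable and $\Pi_{n+1}$ is $\mathcal{F}_n$-measurable; moreover each factor lies in $(0,1]$, so $(\Pi_n)$ is positive and nonincreasing with $\Pi_{n+1} \le 1$. Because $\sum_{n} E_n < +\infty$ a.s. and the $E_n$ are nonnegative, the infinite product $\prod_{k\ge 0}(1+E_k)$ converges a.s. to a finite random variable $P_\infty \ge 1$, so that $\Pi_n \to P_\infty^{-1} > 0$ a.s.

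Next I would rescale by setting $\tilde V_n := \Pi_n V_n$. Multiplying the hypothesis $\mathbb{E}\cro{V_{n+1}|\mathcal{F}_n} \le \pa{1+E_n}V_n + B_n - D_n$ by the $\mathcal{F}_n$-measurable factor $\Pi_{n+1}$, pulling this factor inside the conditional expectation, and using the telescoping identity $\Pi_{n+1}(1+E_n) = \Pi_n$, yields
\[
\mathbb{E}\cro{\tilde V_{n+1}|\mathcal{F}_n} \le \tilde V_n + \tilde B_n - \tilde D_n, \qquad \tilde B_n := \Pi_{n+1}B_n, \quad \tilde D_n := \Pi_{n+1}D_n,
\]
where $\tilde B_n, \tilde D_n \ge 0$, the sequence $(\tilde V_n)$ is positive and adapted, and $\tilde V_0 = V_0$ is integrable. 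This is exactly the structure required by Lemma~\ref{corDuflo}.

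Then I would verify the summability hypothesis for the rescaled sequence: since $\Pi_{n+1} \le 1$, one has $\tilde B_n \le B_n$, hence $\sum_n \tilde B_n / a_n \le \sum_n B_n / a_n < +\infty$ a.s. Applying Lemma~\ref{corDuflo} to $(\tilde V_n)$ with the same normalizing sequence $(a_n) \to +\infty$ gives $\tilde V_n = o\pa{a_n}$ a.s. Finally, dividing by $\Pi_n$ and using that $\Pi_n^{-1} \to P_\infty < +\infty$ a.s.\ (so $\Pi_n^{-1}$ is a.s.\ bounded) recovers $V_n = \Pi_n^{-1}\tilde V_n = o\pa{a_n}$ a.s., which is the claim.

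The only genuinely delicate points are measurability and the a.s.\ positivity of the limiting product. I expect the main obstacle to be the careful bookkeeping of the filtration indices---ensuring that $\Pi_{n+1}$ is $\mathcal{F}_n$-measurable so that it may legitimately be moved inside $\mathbb{E}\cro{\cdot|\mathcal{F}_n}$---together with justifying that $P_\infty$ is a.s.\ finite and strictly positive, since it is this two-sided control of $\Pi_n$ that makes the passage from $\tilde V_n = o\pa{a_n}$ back to $V_n = o\pa{a_n}$ valid without the bounded factor $\Pi_n^{-1}$ destroying the rate.
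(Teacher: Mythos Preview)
Your proof is correct and is essentially the same as the paper's: both divide through by the predictable product $\alpha_{n-1}=\prod_{k=0}^{n-1}(1+E_k)=\Pi_n^{-1}$ to remove the multiplicative factor, reduce to Lemma~\ref{corDuflo}, and then use the a.s.\ finiteness of $\alpha_\infty$ to transfer the rate back to $V_n$. If anything, your treatment of the measurability of $\Pi_{n+1}$ and of the two-sided control of the product is more careful than the paper's.
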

\begin{proof}[Proof of Lemma \ref{corRS}]
Note that the case where $E_n = 0$ is exactly the case of Lemma \ref{corDuflo}. Therefore, we are going to study the case where $E_n \neq 0$. 
We define $\alpha_n := \prod_{k=0}^n (1+E_k)$. Note that since $\sum_{n=0}^\infty E_n$ converges almost surely, $\alpha_n$ converges almost surely to a finite random variable $\alpha_\infty$. Moreover, noting
$$V_{n}' = \frac{V_n}{\alpha_{n-1}},\quad B_{n}' = \frac{B_n}{\alpha_{n }},\quad D_{n}' = \frac{D_n}{\alpha_{n }},$$
we observe that
$$\mathbb{E}\cro{V_{n+1}'|\mathcal{F}_n} \leq V_{n}' +B_{n}'-D_{n}'.$$
In addition, since $\alpha_{n}\ge1$, we have
$$\sum_{n=0}^\infty B'_n \le \sum_{n=0}^\infty B_n < +\infty \quad a.s.$$
According to Lemma \ref{corDuflo}, we have $V'_n = o (a_{n})$ a.s., and
therefore $V_n = o (a_{n})$ a.s. Furthermore, 
$$\sum_{n=0}^\infty D_n \le \alpha_\infty\sum_{n=0}^\infty D'_n < +\infty \quad a.s.$$
\end{proof}
We now give two lemmas which will be tools for the study of the rate of convergence associated to the estimates $A_n$.
\begin{lem}\label{lemmartbeta}
	Let us denote by $ \mathcal{H} = \mathcal{M}_{q} (\mathbb{R})$ the set of squared matrices of size $q \times q$. Let us consider
	\[
	M_{n+1} = \sum_{k=1}^{n} \beta_{n,k} \gamma_{k}  R_{k} \xi_{k+1},
	\]
	where
	\begin{itemize}
		\item $ ( \xi_{n}  )$ is a $\mathcal{H}$-valued martingale differences  sequence adapted  to a filtration $ ( \mathcal{F}_{n}  )$ such that \begin{align}
			\notag & \mathbb{E}\left[ \left\| \xi_{n+1} \right\|_{F}^{2} |\mathcal{F}_{n} \right] \leq C + R_{2,n} \quad a.s, \\
			\label{hypmart} & \sum_{n\geq 1}\gamma_{n}\mathbb{E}\left[ \left\| \xi_{n+1} \right\|_{F}^{2}\mathbf{1}_{\left\| \xi_{n+1} \right\|_{F}^{2} \geq \gamma_{n}^{-1}(\ln n)^{-1}} |\mathcal{F}_{n} \right] < + \infty \quad a.s, 
		\end{align}
		where $C$ is a non-negative random variable and $(R_{2,n})_n$ converges almost surely to $0$;
		\item $\gamma_{n}=cn^{-\gamma}$ with $c> 0$ and $\gamma \in (1/2,1)$;
		\item $(R_{n})$ is a sequence of matrices lying in $\mathcal{H}$ such that
		\[
		\left\| R_{n} \right\|_{F} = o \left( v_{n} \right) \quad a.s \quad \quad \text{where} \quad \quad v_{n} = \frac{(\ln n)^{a}}{n^{b}} ,
		\]
		with $a,b \in \mathbb{R}$;
		\item For all $n \geq 1$ and $1 \leq k \leq n$, and for all $A \in \mathcal{H}$, 
		\[
		\beta_{n,k} A = \prod_{j=k+1}^{n} \left( I_{q} - \gamma_{j} \Gamma \right) A \prod_{j=k+1}^{n}\left( I_{q} - \gamma_{j} \Gamma \right)\quad \text{and} \quad \beta_{n,n} A = A,
		\]
		where $\Gamma \in \mathcal{H} $ is symmetric and satisfies $0 < \lambda_{\min} (\Gamma ) \leq \lambda_{\max} (\Gamma) < + \infty $.
	\end{itemize}
	Then, 
	\[
	\left\| M_{n+1} \right\|_{F}^{2} = O \left(  \gamma_{n}v_{n}^{2}\ln n \right)  \quad a.s.
	\]
\end{lem}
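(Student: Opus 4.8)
The plan is to exploit the multiplicative structure of the weights $\beta_{n,k}$ in order to rewrite the weighted sum as a genuine martingale, and then to obtain the almost sure rate by a truncation argument combined with an exponential (Freedman--Bernstein type) deviation inequality, the truncation level being dictated precisely by \eqref{hypmart}. First I would record the semigroup identity $\beta_{n,k} = \beta_{n,0}\circ\beta_{k,0}^{-1}$, which holds because all the factors $I_q - \gamma_j\Gamma$ commute (they are polynomials in the symmetric matrix $\Gamma$). Writing $\pi_{n,k} := \prod_{j=k+1}^n\pa{1-\gamma_j\lambda_{\min}(\Gamma)}$, this yields the contraction bound $\norm{\beta_{n,k}(A)}_F \le \pi_{n,k}^2\norm{A}_F$ for $n$ large enough. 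Setting
\[
N_n := \sum_{k=1}^n \beta_{k,0}^{-1}\pa{\gamma_k R_k\xi_{k+1}},
\]
the summand is $\mathcal F_{k+1}$-measurable with vanishing conditional expectation given $\mathcal F_k$ (here $\gamma_k R_k$ is predictable, $R_k$ being $\mathcal F_k$-measurable as in the applications), so $(N_n)$ is a genuine $\mathcal H$-valued martingale and $M_{n+1} = \beta_{n,0}(N_n)$. Hence $\norm{M_{n+1}}_F \le \pi_{n,0}^2\norm{N_n}_F$, and it suffices to control $\norm{N_n}_F$.

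Next I would truncate, splitting $\xi_{k+1} = \eta_{k+1}' + \eta_{k+1}''$ into recentered martingale differences, where $\eta'$ is built from $\xi_{k+1}\mathbf 1_{\norm{\xi_{k+1}}_F^2 < c_k}$ and $\eta''$ from $\xi_{k+1}\mathbf 1_{\norm{\xi_{k+1}}_F^2 \ge c_k}$, at the level $c_k := \gamma_k^{-1}(\ln k)^{-1}$ appearing in \eqref{hypmart}. This induces $M_{n+1} = M_{n+1}' + M_{n+1}''$. For the tail part $M''$, the conditional second moments $\mathbb E[\norm{\eta_{k+1}''}_F^2|\mathcal F_k]$ are controlled by $\mathbb E[\norm{\xi_{k+1}}_F^2\mathbf 1_{\norm{\xi_{k+1}}_F^2\ge c_k}|\mathcal F_k]$, whose $\gamma_k$-weighted series converges by \eqref{hypmart}; writing the recursion for $\norm{M_{n+1}''}_F^2$ and invoking Lemma \ref{corRS} (exactly as in the proof of Proposition \ref{grandvp}) shows that $M''$ is negligible with respect to the announced rate.

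On the main part, the increments obey $\norm{\gamma_k R_k\eta_{k+1}'}_F \le 2\gamma_k^{1/2}(\ln k)^{-1/2}\norm{R_k}_F = o\pa{\gamma_k^{1/2}(\ln k)^{-1/2}v_k}$ a.s. Using the elementary summation estimate $\sum_k \pi_{n,k}^4\gamma_k^2 v_k^2 = O(\gamma_n v_n^2)$ (exponential weights against a slowly varying summand), the predictable quadratic variation of the renormalized martingale $N'$ satisfies $\langle N'\rangle_n = O\pa{\pi_{n,0}^{-4}\gamma_n v_n^2}$ a.s. I would then apply a Freedman--Bernstein maximal inequality to $N'$ with threshold $t_n \asymp \pi_{n,0}^{-2}\sqrt{\gamma_n v_n^2\ln n}$, chosen so that $t_n^2/\langle N'\rangle_n \asymp \ln n$ while the increment correction $b_n t_n$ stays of the order of $\langle N'\rangle_n$; the resulting deviation probability is $O(n^{-c})$ with $c$ as large as desired, and a Borel--Cantelli argument over a geometric grid of indices gives $\norm{N_n}_F = O\pa{\pi_{n,0}^{-2}\sqrt{\gamma_n v_n^2\ln n}}$ a.s. Transferring back through $\beta_{n,0}$ yields $\norm{M_{n+1}'}_F^2 = O(\gamma_n v_n^2\ln n)$ a.s., and combining with the tail estimate finishes the proof.

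The principal obstacle is that $M_{n+1}$ is \emph{not} itself a martingale, since the weights $\beta_{n,k}$ depend on the terminal index $n$; the renormalization $M_{n+1} = \beta_{n,0}(N_n)$ repairs this, but at the price of an exponentially growing factor $\pi_{n,0}^{-2}$, so the quantitative heart of the argument is the simultaneous matching of the threshold $t_n$, the quadratic variation $\langle N'\rangle_n$ and the increment bound $b_n$ — all carrying this factor — so that the exponential inequality produces exactly the logarithmic loss $\ln n$ of the statement. A secondary technical point is that $\norm{R_k}_F = o(v_k)$ holds only almost surely, so $\langle N'\rangle_n$ is random; this is handled by localizing on the events $\{\norm{R_k}_F \le \varepsilon v_k \text{ for all } k \ge k_\varepsilon\}$, whose probability tends to $1$ as $k_\varepsilon\to\infty$, before applying the deviation inequality.
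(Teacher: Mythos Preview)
Your overall strategy --- truncate at level $\gamma_k^{-1}(\ln k)^{-1}$, dispose of the large--increment piece via \eqref{hypmart} and a Robbins--Siegmund argument, and control the bounded piece by an exponential inequality plus Borel--Cantelli --- coincides with the paper's. The paper uses a Pinelis $\cosh$-type inequality rather than Freedman--Bernstein, and it also peels off the a.s.\ eventually empty event $\{\|R_n\|_F>v_n\text{ or }R_{2,n}>C\}$ as a third, exponentially decaying piece; but these are cosmetic differences.

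The substantive gap is in your renormalization $M_{n+1}=\beta_{n,0}(N_n)$. The contraction estimate $\|\beta_{n,k}(A)\|_F\le\pi_{n,k}^2\|A\|_F$ with $\pi_{n,k}=\prod_{j=k+1}^n(1-\gamma_j\lambda_{\min}(\Gamma))$ is correct, but the \emph{inverse} obeys $\|\beta_{k,0}^{-1}(A)\|_F\le\hat\pi_{k,0}^{-2}\|A\|_F$ with $\hat\pi_{k,0}=\prod_{j=1}^k(1-\gamma_j\lambda_{\max}(\Gamma))$, not $\pi_{k,0}^{-2}$. Hence the quadratic variation and increment bounds for $N'$ carry the factor $\hat\pi_{n,0}^{-4}$ (resp.\ $\hat\pi_{n,0}^{-2}$), and after transferring back through $\beta_{n,0}$ you are left with
\[
\bigl(\pi_{n,0}/\hat\pi_{n,0}\bigr)^{2}\ \asymp\ \exp\Bigl(2(\lambda_{\max}-\lambda_{\min})\sum_{j\le n}\gamma_j\Bigr)\ \longrightarrow\ \infty,
\]
which blows up the rate. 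The ``simultaneous matching'' you describe only succeeds when $\Gamma$ is a scalar multiple of $I_q$.

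There are two clean fixes. (i) Diagonalize: in the eigenbasis of $\Gamma$, $\beta_{n,k}$ multiplies the $(i,j)$-entry by the scalar $\prod_{l=k+1}^n(1-\gamma_l\lambda_i)(1-\gamma_l\lambda_j)$, so running your renormalization argument entry-by-entry makes the forward and backward factors identical, and summing over the $q^2$ entries gives the result. (ii) Drop the renormalization altogether: for each fixed $n$ the partial sums $j\mapsto\sum_{k=1}^j\beta_{n,k}\gamma_kR_k\eta_{k+1}'$ already form a martingale in $j$, with all weights $\beta_{n,k}$ contractive; apply your exponential inequality directly to this (this is what the paper does), and no eigenvalue mismatch ever appears.
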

\begin{proof}[Proof of Lemma \ref{lemmartbeta}]
	Let us now consider the events
	\begin{align*}
		& A_{n} = \left\lbrace \left\| R_{n} \right\|_{F} > v_{n} \quad \text{or} \quad R_{2,n}> C  \right\rbrace \\
		& B_{n+1} =  \left\lbrace \left\| R_{n} \right\|_{F} \leq v_{n}, R_{2,n} \leq C, \left\| \xi_{n+1} \right\|_{F}\leq \delta_{n}  \right\rbrace \\
		& C_{n+1} = \left\lbrace \left\| R_{n} \right\|_{F} \leq v_{n}, R_{2,n} \leq C, \left\| \xi_{n+1} \right\|_{F} > \delta_{n}  \right\rbrace
	\end{align*}
	with $\delta_{n} = \gamma_{n}^{-1/2}(\ln n)^{-1/2}$. One can check that $A_{n}^{c} = B_{n+1} \sqcup C_{n+1}$. Then, one can write $M_{n+1}$ as
	\begin{align*}
		M_{n+1} & = \sum_{k=1}^{n} \beta_{n,k} \gamma_{k}  R_{k} \xi_{k+1}\mathbf{1}_{A_{k}} + \sum_{k=1}^{n} \beta_{n,k} \gamma_{k}  R_{k} \xi_{k+1}\mathbf{1}_{A_{k}^{c}} \\
		& = \sum_{k=1}^{n} \beta_{n,k} \gamma_{k}  R_{k} \xi_{k+1}\mathbf{1}_{A_{k}} + \sum_{k=1}^{n} \beta_{n,k} \gamma_{k}  R_{k} \left( \xi_{k+1}\mathbf{1}_{B_{k+1}} - \mathbb{E}\left[ \xi_{k+1} \mathbf{1}_{B_{k+1}} |\mathcal{F}_{k} \right] \right) \\
		& + \sum_{k=1}^{n} \beta_{n,k} \gamma_{k}  R_{k} \left( \xi_{k+1}\mathbf{1}_{C_{k+1}} - \mathbb{E}\left[ \xi_{k+1} \mathbf{1}_{C_{k+1}} |\mathcal{F}_{k} \right] \right).
	\end{align*}
	Let us now give the rates of convergence of these three terms.
	
	\bigskip
	
	\noindent\textbf{Bounding $ M_{1,n+1} := \sum_{k=1}^{n} \beta_{n,k} \gamma_{k}  R_{k} \xi_{k+1}\mathbf{1}_{A_{k}} $. }
	There exists a rank $n_{0}$ such that for all $n \geq n_{0}$, $ \| I_{q} - \gamma_{n} \Gamma  \|_{op} \leq ( 1- \lambda_{\min} \gamma_{n}  ) $. Furthermore, $M_{1,n+1} =  ( I_{q} - \gamma_{n} \Gamma  ) M_{1,n} ( I_{q} - \gamma_{n} \Gamma  ) + \gamma_{n}  R_{n} \xi_{n+1} \mathbf{1}_{A_{n}}$. Then, for all $n \geq n_{0}$,
	\[
	\mathbb{E}\left[ \left\| M_{1,n+1} \right\|_{F}^{2} |\mathcal{F}_{n} \right] \leq \left( 1- \lambda_{\min}\gamma_{n} \right)^{4} \left\| M_{1,n} \right\|_{F}^{2} + \gamma_{n}^{2} \left\| R_{n} \right\|_{F}^{2} \left( C + R_{2,n} \right) \mathbf{1}_{A_{n}} .
	\]
	Considering $V_{n+1} = \prod_{k=1}^{n}  ( 1 +  \lambda_{\min} \gamma_{k}  )^{4} \| M_{1,n+1}  \|_{F}^{2}$, it follows that
	\[
	\mathbb{E}\left[ V_{n+1} |\mathcal{F}_{n} \right] \leq \left( 1 - \lambda_{\min}^{2} \gamma_{n}^{2} \right)^{4} V_{n} + \prod_{k=1}^{n} \left( 1 +  \lambda_{\min} \gamma_{k} \right)^{4}\gamma_{n}^{2}  \left\| R_{n} \right\|_{F}^{2} \left( C + R_{2,n} \right) \mathbf{1}_{A_{n}} 
	\] 
	Moreover, $\mathbf{1}_{A_{n}}$ converges almost surely to $0$ implying that
	\[
	\sum_{n\geq 1} \prod_{k=1}^{n} \left( 1 +  \lambda_{\min} \gamma_{k} \right)^{4}\gamma_{n}^{2}  \left\| R_{n} \right\|_{F}^{2} \left( C + R_{2,n} \right) \mathbf{1}_{A_{n}} < + \infty \quad a.s
	\]
	and applying Robbins-Siegmund Theorem, $V_{n}$ converges almost surely to a finite random variable, i.e
	\[
	\left\| M_{1,n+1} \right\|_{F}^{2} = \mathcal{O} \left( \prod_{k=1}^{n} \left( 1 +  \lambda_{\min} \gamma_{k} \right)^{-4}  \right) \quad a.s
	\]
	and converges exponentially fast.
	
	\bigskip
	
	\noindent\textbf{Bounding $M_{2,n+1} := \sum_{k=1}^{n} \beta_{n,k} \gamma_{k}  R_{k} ( \xi_{k+1}\mathbf{1}_{B_{k+1}} - \mathbb{E} [ \xi_{k+1} \mathbf{1}_{B_{k+1}} |\mathcal{F}_{k}  ]  )$. }
	Let us denote $\Xi_{k+1} =   R_{k}  ( \xi_{k+1}\mathbf{1}_{B_{k+1}} - \mathbb{E} [ \xi_{k+1} \mathbf{1}_{B_{k+1}} |\mathcal{F}_{k} ]  )$. Remark that $ ( \Xi_{n}  )$ is a sequence of martingale differences adapted to the filtration $ ( \mathcal{F}_{n}  )$. As in \cite{pinelis1994optimum} (proofs of Theorems~3.1 and 3.2), let $\lambda > 0$ and consider for all $t \in [ 0, 1]$ and $j \leq n$,
	\[
	\varphi(t) = \mathbb{E}\left[ \cosh \left( \lambda \left\| \sum_{k=1}^{j-1} \beta_{n,k}\gamma_{k} \Xi_{k+1} + t \beta_{n,j} \gamma_{j}\Xi_{j+1} \right\|_{F} \right) \left|\mathcal{F}_{j}\right. \right].
	\]
	One can check that $\varphi '(0) = 0$ and (see Pinelis for more details)
	\[
	\varphi '' (t) \leq \lambda^{2} \mathbb{E}\left[ \left\| \beta_{n,j} \gamma_{j}\Xi_{j+1} \right\|_{F}^{2}e^{ \lambda t \left\| \beta_{n,j} \gamma_{j}\Xi_{j+1} \right\|_{F}} \cosh \left( \lambda \left\| \sum_{k=1}^{j-1} \beta_{n,k}\gamma_{k} \Xi_{k+1} \right\|_{F} \right) |\mathcal{F}_{j} \right]
	\]
	Then,
	\begin{align*}
		\mathbb{E}\left[ \cosh \left( \lambda \left\| \sum_{k=1}^{j} \beta_{n,k}\gamma_{k} \Xi_{k+1} \right\|_{F} \right) | \mathcal{F}_{j} \right]	  = \varphi (1) & = \varphi (0) + \int_{0}^{1} (1-t) \varphi ''(t) dt \\
		& \leq \left( 1+ e_{j,n } \right)\cosh \left( \lambda \left\| \sum_{k=1}^{j-1} \beta_{n,k}\gamma_{k} \Xi_{k+1} \right\|_{F} \right) 
	\end{align*}
	with $e_{j,n} = \mathbb{E} [ e^{\lambda  \| \beta_{n,j} \gamma_{j}\Xi_{j+1}  \|_{F}} -1 - \lambda \| \beta_{n,j} \gamma_{j}\Xi_{j+1}  \|_{F} |\mathcal{F}_{j}  ]$, which is well defined since $\Xi_{j+1}$ is a.s. finite. Additionally, considering
	\[
	G_{n+1} = \frac{\cosh \left( \lambda \left\| \sum_{k=1}^{n} \beta_{n,k}\gamma_{k} \Xi_{k+1} \right\|_{F} \right)}{\prod_{j=1}^{n} \left( 1 + e_{j,n} \right)} \quad \quad \text{and} \quad \quad G_{0} = 1
	\]
	and since $\mathbb{E} [ G_{n+1} |\mathcal{F}_{n}  ] =G_{n}$, it comes $\mathbb{E} [ G_{n+1}  ] = 1 $. For all $r > 0$,
	\begin{align*}
		\mathbb{P}\left[ \left\| M_{2,n+1} \right\|_{F} \geq r \right] & = \mathbb{P}\left[ G_{n+1} \geq \frac{\cosh (\lambda r )}{\prod_{j=1}^{n}\left( 1+ e_{j,n} \right)} \right]  \leq \mathbb{P}\left[ 2G_{n+1} \geq \frac{e^{\lambda r }}{\prod_{j=1}^{n} \left( 1+ e_{jn} \right)} \right].
	\end{align*}
	Furthermore, let $\epsilon_{j+1} = \xi_{j+1}\mathbf{1}_{B_{j}} - \mathbb{E} [\xi_{j+1} \mathbf{1}_{B_{j}} | \mathcal{F}_{j} ]$ and note that $\mathbb{E} [  \| \epsilon_{j+1}  \|_{F}^{2} |\mathcal{F}_{j}  ] \leq 2C$. Then, recalling that $\delta_{n} = \gamma_{n}^{-1/2}(\ln n)^{-1/2}$, and since for all $k \geq 2$, 
	\[
	\mathbb{E}\left[ \left\| \epsilon_{j+1}\right\|_{F}^{k} |\mathcal{F}_{j} \right] \leq 2^{k-2}\delta_{j}^{k-2} \mathbb{E}\left[ \left\| \xi_{j+1} \right\|_{F}^{2} \mathbf{1}_{B_{j}} |\mathcal{F}_{j} \right] \leq 2^{k-1}C\delta_{j}^{k-2} ,
	\]
	ans since for any $A \in \mathcal{H}$ one has $\left\| \beta_{n,k} A \right\|_{F} \leq \left\| \beta_{n,j} \right\|_{op} \left\| A \right\|_{F}$,
	\begin{align*}
		e_{j,n}  \leq \sum_{k=2}^{\infty} \lambda^{k} \left\| \beta_{n,j}\right\|_{op}^{k} \gamma_{j}^{k} &  \mathbb{E}\left[ \left\|  \Xi_{j+1} \right\|_{F}^{k} |\mathcal{F}_{j} \right]  
		\leq \sum_{k=2}^{\infty} \lambda^{k} \left\| \beta_{n,j}\right\|_{op}^{k} \gamma_{j}^{k} v_{j}^{k} \mathbb{E}\left[ \left\| \epsilon_{j+1} \right\|_{F}^{k} |\mathcal{F}_{k} \right] \\
		& \leq \sum_{k=2}^{\infty} \lambda^{k} \left\| \beta_{n,j}\right\|_{op}^{k} \gamma_{j}^{k} v_{j}^{k} 2^{k-1}C\delta_{j}^{k-2} \\
		& \leq 2C\lambda^{2} \left\| \beta_{n,j} \right\|_{op}^{2} \gamma_{j}^{2}v_{j}^{2} \sum_{k=2}^{\infty} (2\lambda)^{k-2} \left\| \beta_{n,j}\right\|_{op}^{k-2}\gamma_{j}^{\frac{k-2}{2}} v_{j}^{k-2}\ln j^{-\frac{k-2}{2}} \\
		& = 2C\lambda^{2} \left\| \beta_{n,j} \right\|_{op}^{2} \gamma_{j}^{2}v_{j}^{2} \exp \left( 2 \lambda \left\| \beta_{n,j}\right\|_{op}\sqrt{\gamma_{j}} v_{j} \right)
	\end{align*}
	Then,
	\begin{align*}
	&\mathbb{P}\left[ \left\| M_{2,n+1} \right\|_{F} \geq r \right] \\
	\leq &\mathbb{P}\left[ 2G_{n+1} \geq \frac{e^{\lambda r}}{\prod_{j=1}^{n}\left( 1+  2C\lambda^{2} \left\| \beta_{n,j} \right\|_{op}^{2} \gamma_{j}^{2} v_{j}^{2}\exp \left( 2 \lambda \left\| \beta_{n,j}\right\|_{op} v_{j}\sqrt{\gamma_{j}\ln j} \right) \right) } \right]
\end{align*}
	Applying Markov's inequality, 
	\[
	\mathbb{P}\left[ \left\| M_{2,n+1} \right\| \geq r \right] \leq 2 \exp \left( - \lambda r + 2C\lambda^{2} \sum_{j=1}^{n} \left\| \beta_{n,j} \right\|_{op}^{2} \gamma_{j}^{2}v_{j}^{2} \exp \left( 2 \lambda \left\| \beta_{n,j}\right\|_{op}v_{j}\sqrt{\gamma_{j}\ln j} \right) \right) .
	\]
	Take $\lambda = \gamma_{n}^{-1/2}v_{n}^{-1}\sqrt{\ln n}$. Let $C_{0} =  \| \beta_{n_{0},0}  \|_{op}$ and remark that for $n \geq 2n_{0}$ (i.e such that $\gamma_{n/2} \lambda_{\max}(\Gamma) \leq 1$),  and for all $j \leq n/2$,
	\[
	\left\| \beta_{n,j} \right\|_{op} \leq C_{0}\exp \left( - c\lambda_{\min}(n/2)^{1-\alpha} \right),
	\]
	so that for all $j \leq n/2$,
	\[
	\lambda \left\|\beta_{n,j} \right\|_{op} \gamma_{j}v_{j}   \limite{n\to + \infty}{a.s} 0 .
	\]
	Furthermore, for all $n \geq 2n_{0}$, and for all $j \geq n/2$, 
	\[
	\lambda \left\|\beta_{n,j} \right\|_{op} \frac{\sqrt{\gamma_{j}}v_{j}}{\sqrt{\ln j}} \leq C_{0} 2^{2b+2a+\alpha+1}.
	\]
	Then, there is a positive constant $C''$ such that for all $n \geq 1$ and $j \leq n$,
	\[
	\exp \left( \lambda \left\|\beta_{n,j} \right\|_{op} \sqrt{\gamma_{j}}v_{j} \right) \leq C''
	\]
	Finally, one can easily check that (see Lemma E.2 in \cite{CG2015})
	\[
	\sum_{j=1}^{n} \left\| \beta_{n,j}\right\|_{op}^{2} \gamma_{j}^{2} \frac{(\ln j)^{2a}}{j^{2b}} =\mathcal{O} \left( \frac{(\ln n)^{2a}}{n^{2b+\alpha}} \right) .
	\]
	There is a positive constant $C'''$ such that
	\[
	\mathbb{P}\left[ \left\| M_{2,n+1} \right\|_{F} \geq r \right] \leq \exp \left( - r v_{n}^{-1}\gamma_{n}^{-1/2}\sqrt{\ln n} + C''' \ln n \right)
	\]
	Then , taking $r =  ( 2+C'''  ) v_{n} \sqrt{\gamma_{n}\ln n}$, it comes
	\[
	\mathbb{P}\left[ \left\| M_{2,n+1} \right\|_{F} \geq \left( 2+C''' \right) v_{n} \sqrt{\gamma_{n}\ln n} \right] \leq \exp \left( - 2 \ln n \right) = \frac{1}{n^{2}}
	\]
	and applying Borell Cantelli's lemma,
	\[
	\left\| M_{2,n+1} \right\|_{F} = \mathcal{O} \left( v_{n}\sqrt{\gamma_{n}\ln n}  \right) \quad a.s.
	\]

	\noindent\textbf{Bounding $M_{3,n+1} := \sum_{k=1}^{n} \beta_{n,k} \gamma_{k}  R_{n} ( \xi_{k+1}\mathbf{1}_{C_{k+1}} - \mathbb{E} [ \xi_{k+1} \mathbf{1}_{C_{k+1}} |\mathcal{F}_{k}  ]  )$. } Let us denote 
	\[
	\epsilon_{k+1} = \xi_{k+1}\mathbf{1}_{C_{k+1}} - \mathbb{E} [ \xi_{k+1} \mathbf{1}_{C_{k+1}} |\mathcal{F}_{k}  ]
	\] 
	and remark that for $n \geq n_{0} $,
	\begin{align*}
		\mathbb{E}\left[ \left\| M_{3,n+1} \right\|_{F}^{2} |\mathcal{F}_{n} \right] & \leq  \left( 1- \lambda_{\min} \gamma_{n} \right)^{4} \left\| M_{3,n} \right\|_{F}^{2} +  \gamma_{n}^{2} v_{n}^{2} \mathbb{E}\left[ \left\| \epsilon_{n+1} \right\|_{F}^{2} |\mathcal{F}_{n} \right] \\
		& \leq \left( 1- \lambda_{\min} \gamma_{n} \right)^{4} \left\| M_{3,n} \right\|_{F}^{2} +  \gamma_{n}^{2}v_{n}^{2} \mathbb{E}\left[ \left\| \xi_{n+1} \right\|_{F}^{2} \mathbf{1}_{\left\| \xi_{n+1} \right\|_{F}^{2} \geq \gamma_{n}^{-1}} |\mathcal{F}_{n} \right]
	\end{align*}
	Let $V_{n}' = \gamma_{n}^{-1}v_{n}^{-2}  \| M_{3,n}  \|_{F}^{2}$. There are a rank $n_{1}$ and a positive constant $c$ such that for all $n \geq n_{1}$
	\[
	\mathbb{E}\left[ V_{n+1} |\mathcal{F}_{n} \right] \leq \left( 1- c\gamma_{n} \right) V_{n} + \mathcal{O} \left( \gamma_{n}\mathbb{E}\left[ \left\| \xi_{n+1} \right\|_{F}^{2} \mathbf{1}_{\left\| \xi_{n+1} \right\|_{F}^{2} \geq \gamma_{n}^{-1}} |\mathcal{F}_{n} \right] \right)  \quad a.s.
	\]
	Applying Robbins-Siegmund Theorem as well as equation \eqref{hypmart}, it comes
	\[
	\left\| M_{3,n+1} \right\|_{F}^{2} = \mathcal{O} \left( \gamma_{n}v_{n}^{2} \right) \quad a.s.
	\]
	
\end{proof}
\begin{lem} \label{lemcours}
	Let $J_n$, $K_n$, $r_n$ be sequences of positive random variables and $c$ be a positive constant such that $r_n$ converges almost surely to $0$ and
	$$J_{n+1} = (1-c\widetilde{\gamma}_{n+1})J_n+\widetilde{\gamma}_{n+1}r_n(J_n+K_n)$$
	where $\widetilde{\gamma}_{n}=c_{\widetilde{\gamma}} n^{- \widetilde{\gamma}}$ 
	with $1/2<\widetilde{\gamma}<1$ and $c_{\widetilde{\gamma}}>0$. In addition, it is assumed that 
	$$K_n = \mathcal{O}(v_n) \quad a.s.$$
	where $v_n = c_vn^v(\ln n)^b$ with $v\in\mathbb{R}$ and $b\ge0$. Then
	$$J_n = \mathcal{O}(v_n) \quad a.s.$$
\end{lem}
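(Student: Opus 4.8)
The plan is to argue pathwise on the almost sure event on which both $r_n \to 0$ and $K_n = \mathcal{O}(v_n)$ hold; on this event the claim reduces to a deterministic statement about the scalar recursion, and the natural device is to rescale by $v_n$. Setting $w_n := J_n / v_n \geq 0$, it suffices to show that $(w_n)$ remains bounded. First I would rewrite the recursion as
\[
J_{n+1} = \left(1 - c\widetilde{\gamma}_{n+1} + \widetilde{\gamma}_{n+1} r_n\right) J_n + \widetilde{\gamma}_{n+1} r_n K_n
\]
and divide by $v_{n+1}$ to obtain
\[
w_{n+1} = \frac{v_n}{v_{n+1}}\left(1 - c\widetilde{\gamma}_{n+1} + \widetilde{\gamma}_{n+1} r_n\right) w_n + \frac{\widetilde{\gamma}_{n+1} r_n K_n}{v_{n+1}}.
\]

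The heart of the argument is to show that, for $n$ large, this takes the contractive form $w_{n+1} \leq \left(1 - \tfrac{c}{2}\widetilde{\gamma}_{n+1}\right) w_n + \delta_n \widetilde{\gamma}_{n+1}$ with $\delta_n \to 0$. For the multiplicative coefficient I would use that $v_n / v_{n+1} = 1 + \mathcal{O}(1/n)$ and, crucially, that $\widetilde{\gamma} < 1$ forces $1/n = o(\widetilde{\gamma}_{n+1})$ (indeed $n\widetilde{\gamma}_{n+1} \to \infty$); together with $r_n \to 0$ this gives a coefficient equal to $1 - c\widetilde{\gamma}_{n+1} + o(\widetilde{\gamma}_{n+1})$, which is eventually at most $1 - \tfrac{c}{2}\widetilde{\gamma}_{n+1} \in (0,1)$, and the bound on $w_{n+1}$ is legitimate because $w_n \geq 0$. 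For the additive term I would use $K_n = \mathcal{O}(v_n)$ and $v_n/v_{n+1} = \mathcal{O}(1)$ together with $r_n \to 0$ to write it as $\delta_n \widetilde{\gamma}_{n+1}$ with $\delta_n \to 0$.

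It then remains to turn the contractive recursion into a uniform bound by a discrete comparison argument: choosing $N$ large enough for the above to hold and setting $B := \max\{w_N, \sup_{m \geq N}(2\delta_m/c)\}$, which is finite since $\delta_n \to 0$, one checks by induction that $w_n \leq B$ for all $n \geq N$, because whenever $w_n \leq B$ one has $w_{n+1} \leq B - \widetilde{\gamma}_{n+1}\left(\tfrac{c}{2}B - \delta_n\right) \leq B$. Since the finitely many earlier terms are almost surely finite, this yields $w_n = \mathcal{O}(1)$, i.e. $J_n = \mathcal{O}(v_n)$ a.s. I expect the main obstacle to be the bookkeeping that certifies that the two perturbations of the contraction coefficient, the ratio $v_n/v_{n+1}$ and the factor $r_n$, are genuinely negligible against $\widetilde{\gamma}_{n+1}$; this is exactly where the hypothesis $\widetilde{\gamma} \in (1/2,1)$ (in particular $\widetilde{\gamma} < 1$) is used, and where sign care for $v \in \mathbb{R}$ matters since $v_n/v_{n+1}$ may exceed $1$.
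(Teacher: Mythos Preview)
Your argument is correct. Both your approach and the paper's get to a contractive recursion with contraction factor $1-\tfrac{c}{2}\widetilde{\gamma}_{n+1}$ once $n$ is large, but the implementations differ. The paper splits on the event $\{r_n\le c/2\}$ to write
\[
J_{n+1}\le\bigl(1-\tfrac{c}{2}\widetilde{\gamma}_{n+1}\bigr)J_n+\tfrac{c}{2}\widetilde{\gamma}_{n+1}K_n+\delta_n\mathbf{1}_{\{r_n>c/2\}},
\]
then unrolls this into a product expansion $J_n\le\tilde\beta_{n,0}J_0+J_{1,n}+J_{2,n}$, observes that the indicator term $J_{2,n}$ dies exponentially (the indicator is eventually zero), and finally invokes an external result (Proposition~B.4 in \cite{godichon2023non}) to conclude $J_{1,n}=\mathcal{O}(v_n)$ from the recursion $J_{1,n+1}\le(1-\tfrac{c}{2}\widetilde{\gamma}_{n+1})J_{1,n}+\tfrac{c}{2}\widetilde{\gamma}_{n+1}Kv_n$. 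You instead rescale up front by setting $w_n=J_n/v_n$ and absorb the ratio $v_n/v_{n+1}=1+\mathcal{O}(1/n)$ into the contraction coefficient using $1/n=o(\widetilde{\gamma}_{n+1})$; because your forcing term still carries the factor $r_n\to 0$, you get $\delta_n\to 0$ rather than merely $\delta_n=\mathcal{O}(1)$, and this lets you close the induction with the elementary barrier $B=\max\{w_N,\sup_{m\ge N}2\delta_m/c\}$ without any product expansion or external lemma. Your route is more self-contained; the paper's route is slightly more general in that its final step handles forcing of size $\mathcal{O}(v_n)$ rather than $o(v_n)$, at the cost of citing an auxiliary proposition. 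One small point of bookkeeping you should make explicit: in the induction step you also need $1-\tfrac{c}{2}\widetilde{\gamma}_{n+1}\ge 0$ so that $w_n\le B$ propagates to $(1-\tfrac{c}{2}\widetilde{\gamma}_{n+1})w_n\le(1-\tfrac{c}{2}\widetilde{\gamma}_{n+1})B$; this is automatic for $n$ large and just enlarges $N$.
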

\begin{proof}[Proof of Lemma \ref{lemcours}]
		For the sake of simplicity, let us assume that for every \( n \geq 0 \), \( c\widetilde{\gamma}_{n+1} \leq 1 \) (up to take \( n \) large enough). Now, consider the event \( E_{n,c} = \left\lbrace \left| r_{n} \right| \leq c/2 \right\rbrace \), and therefore \( \mathbf{1}_{E_{n,c}^{C}} \) converges almost surely to \( 0 \). Hence, \( J_{n+1} \) can be rewritten as:
		\begin{align*}
			J_{n+1} & \leq  \left( 1- c\widetilde{\gamma}_{n+1} \right) J_{n} + \frac{c}{2}\widetilde{\gamma}_{n+1} \left(  J_{n} + K_{n} \right) + \overbrace{\widetilde{\gamma}_{n+1} r_{n} \left( J_{n} + K_{n} \right)}^{=: \delta_{n}}\mathbf{1}_{E_{n,c}^{C}} \\
			& \leq \left( 1- \frac{c}{2}\widetilde{\gamma}_{n+1} \right) J_{n} + \frac{c}{2}\widetilde{\gamma}_{n+1} K_{n} + \delta_{n}\mathbf{1}_{E_{n,c}^{C}}
		\end{align*}
		By induction, one can check that for all \( n \geq 0 \):
		\begin{align*}
			J_{n} &\leq \tilde{\beta}_{n,0}J_{0} + \underbrace{\frac{c}{2} \sum_{k=0}^{n-1}\tilde{\beta}_{n,k+1}\widetilde{\gamma}_{k+1} K_{k}}_{=: J_{1,n}}
			  + \underbrace{\sum_{k=0}^{n-1} \tilde{\beta}_{n,k+1} \delta_{k}\mathbf{1}_{E_{k,c}^{C}}}_{=: J_{2,n}}
		\end{align*}
		with \( \tilde{\beta}_{n,k} := \prod_{j=k+1}^{n} \left( 1- \frac{c}{2}\widetilde{\gamma}_{j} \right) \) and \( \tilde{\beta}_{n,n} := 1 \). Using standard calculations, we can easily show that \( \tilde{\beta}_{n,0} \) converges at an exponential rate. Furthermore, \( J_{2,n} \) can be written as \( \tilde{\beta}_{n,0} \sum_{k=0}^{n-1} \tilde{\beta}_{k,0}^{-1} \delta_{k} \mathbf{1}_{E_{k,c}^{C}} \) and since \( \mathbf{1}_{E_{n,c}^{C}} \) converges almost surely to \( 0 \), the sum is almost surely finite, leading to
		\[ J_{2,n} = O \left( \tilde{\beta}_{n,0} \right) \quad \text{a.s.} \]
		and this term thus converges at an exponential rate.
		Finally, there exists a random variable \( 	K \) such that for every \( n \geq 1 \), \( K_{n} \leq Kv_{n} \) almost surely, leading to the induction relation:
		\begin{align*}
			J_{1,n+1} = \left( 1- \frac{c}{2}\widetilde{\gamma}_{n+1} \right) J_{1,n} + \frac{c}{2}\widetilde{\gamma}_{n+1}K_{n} \le \left( 1- \frac{c}{2}\widetilde{\gamma}_{n+1} \right) J_{1,n} +\frac{c}{2}\widetilde{\gamma}_{n+1}Kv_{n}
		\end{align*}
		By applying Proposition B.4 in \cite{godichon2023non}, we obtain:
		\[ J_{1,n} = O \left( v_{n} \right) \quad \text{a.s.} \]
\end{proof}

\end{appendix}
\end{document}